\documentclass[11pt,a4paper]{article}

\usepackage{amssymb,amsmath,amsfonts,amsthm,mathrsfs}
\usepackage{bbm}
\usepackage[marginal]{footmisc}
\usepackage{CJK}
\usepackage{mathrsfs}
\usepackage{graphicx}
\usepackage{float}
\usepackage{xcolor}
\usepackage{url}
\usepackage{hyperref}
\usepackage{relsize}
\usepackage{appendix}
\usepackage{epstopdf}

\theoremstyle{definition}


\title{\bf On Limit Formulas for Besov Seminorms and Nonlocal Perimeters in the Dunkl Setting}
\author{Huaiqian Li\footnote{Email: {\color{blue}huaiqian.li@tju.edu.cn}.  Partially supported by the National Key R\&D Program of China (Grant No. 2022YFA1006000).},\quad  Bingyao Wu\footnote{Email: {\color{blue}bingyaowu@163.com}. Partially supported by the National Key R\&D Program of China (Grant Nos. 2023YFA1010400 and 2022YFA1006003) and NNSFC  (Grant No. 12401174).}
  \vspace{2mm}
\\
{\footnotesize $^{*)}$Center for Applied Mathematics and KL-AAGDM, Tianjin University, Tianjin 300072, China}
\\
{\footnotesize $^{\dag)}$School of Mathematics and Statistics,
 Fujian Normal University, Fuzhou 350007, China}
}

\date{}



\setlength{\hoffset}{-0.4mm} \setlength{\voffset}{-0.4mm}
\setlength{\textwidth}{420pt} \setlength{\textheight}{640pt}
\setlength{\topmargin}{0pt} \setlength{\oddsidemargin}{14pt}
\setlength{\evensidemargin}{14pt} \setlength\arraycolsep{1pt}
\setlength{\headsep}{0mm} \setlength{\headheight}{0mm}

\def\R{\mathbb{R}}

\def\d{\textup{d}}

\def\1{\mathbbm{1}}

\def\<{\langle}
\def\>{\rangle}
\def\Proof.{\noindent{\bf Proof. }}

\def\newdot{{\kern.8pt\cdot\kern.8pt}}

\newtheorem{theorem}{Theorem}[section]
\newtheorem{lemma}[theorem]{Lemma}

\newtheorem{proposition}{Proposition}[section]

\newtheorem{definition}[theorem]{Definition}
\theoremstyle{definition}\newtheorem{remark}[theorem]{Remark}

\begin{document}
\allowdisplaybreaks
\maketitle
\makeatletter 
\renewcommand\theequation{\thesection.\arabic{equation}}
\@addtoreset{equation}{section}
\makeatother 

\begin{abstract}
We investigate the limiting behavior of Besov seminorms and nonlocal perimeters in Dunkl theory.
The present work generalizes two fundamental results: the Maz'ya--Shaposhnikova formula for Gagliardo seminorms and the asymptotics of (relative) fractional $s$-perimeters.
Our main contributions are twofold. First, we establish a dimension-free Maz'ya--Shaposhnikova formula via a novel, robust approach that avoids reliance on the density property of Besov spaces, offering broader applicability. Second, we prove limit formulas for nonlocal perimeters relative to bounded open sets $\Omega$, removing boundary regularity assumptions in the forward direction, while introducing a weakened regularity condition on $\partial\Omega$ (admitting fractal boundaries) for the converse, a significant improvement over existing requirements. To the best of our knowledge, the results in this second part are new even in the classic Laplacian setting.
\end{abstract}



\section{Introduction and main results}\label{sec-intro}\hskip\parindent
We consider the $n$-dimensional Euclidean space $\R^n$ endowed with the standard inner product $\langle\cdot,\cdot\rangle$ and the induced norm $|\cdot|$. Let $p\in[1,\infty)$ and $s\in(0,1)$. The fractional Sobolev space ${\rm W}^{s,p}(\R^n)$ is defined as
$${\rm W}^{s,p}(\R^n):=\{f\in {\rm L}^p(\R^n):\ [f]_{{\rm W}^{s,p}}<\infty\},$$
where ${\rm L}^p(\R^n)$ denotes the standard Lebesgue space, and $[\cdot]_{{\rm W}^{s,p}}$ is the Gagliardo seminorm given by
$$[f]_{{\rm W}^{s,p}}=\bigg(\int_{\R^n}\int_{\R^n}\frac{|f(x)-f(y)|^p}{|x-y|^{n+ps}}\,\d y\d x\bigg)^{1/p}.$$
For a comprehensive study of fractional Sobolev spaces, we refer to \cite{Leoni2023,DNPV2012}. In their seminal works \cite{BBM1,BBM2}, J. Bourgain, H. Brezis and P. Mironescu investigated the limiting behavior of the spaces ${\rm W}^{s,p}(\R^n)$ as $s\rightarrow1^-$, leading to a novel characterization of classical Sobolev spaces ${\rm W}^{1,p}(\R^n)$ and spaces of functions of bounded variation on $\R^n$. (The case when $\R^n$ is replaced by a bounded regular domain was also considered in \cite{BBM1,Davila02}). Complementing this, V. Maz'ya and T. Shaposhnikova examined the case $s\rightarrow0^+$. Among their results, they proved in \cite[Theorem 3]{MS2002} that if $f\in {\rm W}^{s_0,p}(\R^n)$ for some $s_0\in(0,1)$, then
\begin{equation}\label{MS}
\lim_{s\rightarrow0^+}s [f]_{{\rm W}^{s,p}}^p=\frac{2}{p}\omega_{n-1}\|f\|_{{\rm L}^p}^p,
\end{equation}
where $\|\cdot\|_{{\rm L}^p}$ is the standard $L^p$-norm on ${\rm L}^p(\R^n)$, and $\omega_{n-1}=\frac{2\pi^{n/2}}{\Gamma(n/2)}$ denotes the surface area of the unit sphere in $\R^n$. Here, $\Gamma$ stands for the Gamma function. We refer to \eqref{MS} as MS formula. Further developments and extensions of this formula with a dimension-dependent constant have been explored in various settings recently, see, e.g., \cite{HPXZ,DLTYY2024,CDFB2023,PSV2017,Ludwig2014,KMX2005} and references therein.

Moreover, the Gagliardo seminorm is closely related to the concept of the (fractional) $s$-perimeter, introduced by L. Caffarelli, J.-M. Roquejoffre and O. Savin in \cite{CRS2009} (with earlier connections to similar functionals studied by A. Visintin in  \cite{Vis1991}). The $s$-perimeter  has emerged as a fundamental tool in the study of $s$-minimal surfaces and phase transition problems, attracting considerable research interest since its introduction. For a comprehensive overview of recent developments, see the recent survey \cite{Serra2024} and the monograph \cite{MRT2019}, along with the references cited therein. Given a measurable set $E\subset\R^n$ and an open set $\Omega\subset\R^n$, the $s$-perimeter of $E$ relative to $\Omega$ for $s\in(0,1/2)$, is defined as
\begin{equation}\begin{split}\label{def-s-per}
{\rm Per}_s(E,\Omega)&=\int_{E\cap \Omega}\int_{E^c\cap\Omega}\frac{1}{|y-x|^{n+2s}}\,\d y\d x \cr
&\quad+ \int_{E\cap \Omega}\int_{E^c\cap\Omega^c}\frac{1}{|y-x|^{n+2s}}\,\d y\d x
+\int_{E\cap \Omega^c}\int_{E^c\cap\Omega}\frac{1}{|y-x|^{n+2s}}\,\d y\d x,
\end{split}\end{equation}
where $E^c=\R^n\setminus E$. This definition captures nonlocal interactions between points inside and outside $E$, distinguishing it from the classical (local) perimeter (see, e.g.,  \cite[Chapter 5]{EvaGar2015}). If $\1_E$ belongs to ${\rm W}^{2s,1}(\R^n)$ (e.g., $E$ is bounded with smooth enough boundary) in addition, then $E$ has finite $s$-perimeter (relative to $\R^n$), and
$${\rm Per}_s(E,\R^n)=\frac{1}{2}[\1_E]_{{\rm W}^{2s,1}}<\infty.$$
This notion appeared in \cite{BBM1,BBM2} mentioned before. Furthermore, applying the MS formula \eqref{MS} yields the asymptotic behavior:
\begin{equation}\label{asym-s-perimeter}
\lim_{s\rightarrow0^+}  s{\rm Per}_s(E,\R^n)=\frac{1}{2}\omega_{n-1}\mathscr{L}^n(E),
\end{equation}
where $\mathscr{L}^n$ denotes the $n$-dimensional Lebesgue measure.

The asymptotic formula \eqref{asym-s-perimeter} was later refined by S. Dipierro, A. Figalli, G. Palatucci and E. Valdinoci \cite{DFPV},
who established a limiting characterization for bounded domains $\Omega$ of class $C^{1,\alpha}$ for some $\alpha\in(0,1)$ and measurable sets $E$ not necessarily contained in $\Omega$, under two key assumptions: ${\rm Per}_\sigma(E,\Omega)<\infty$ for some $\sigma\in(0,1/2)$, and the existence of the limit
\begin{equation}\label{weight-density}
\iota(E):=\lim_{s\rightarrow0^+}s\int_{E\setminus B_1}|x|^{-(n+2s)}\,\d x,
\end{equation}
where $B_1$ denotes the open unit ball in $\R^n$ centered at the origin, and $\iota(E)$ captures the weighted Lebesgue measure of $E$ at infinity. Crucially, neither condition can be dropped, as explicit counterexamples provided in \cite{DFPV} demonstrate their necessity.  Recent extensions include the setting of Riemannian manifolds and ${\rm RCD}(K,\infty)$ spaces (where $K\in\R$) possessing the $L^\infty$-Liouville property \cite{CG2024}, and the $s$-fractional Gaussian perimeter framework \cite{CCLMP}. However, all such results  require $\Omega$ to satisfy standard boundary regularity conditions (typically, $C^{1,\alpha}$ or Lipschitz).

It is worth noting that, as $s\rightarrow\frac{1}{2}^-$, up to a dimension-dependent constant, the classical perimeter of a measurable set $E\subset\R^n$ can be recovered through a renormalized limit of its $s$-perimeter in various sense. For instance, this convergence holds in the sense of $\Gamma$-convergence and pointwise limits. While a detailed analysis of this topic falls outside the scope of our current work, we refer the interested reader to the recent works \cite{CDLKNP2023,Lom2019,MRT2019,BP2019,Lud2014,CaffValdV2011,AmDePhMa2011} for detailed treatments.

Both \eqref{MS} and \eqref{asym-s-perimeter} admit dimension-free formulations.  To recall these, let  $(P_t)_{t\geq0}$ be the standard heat semigroup generated by the Laplacian $\Delta$. For a bounded measurable function $f$ on $\R^n$,
$$P_tf(x):=\int_{\R^n}f(y)p_t(x,y)\,\d y,\quad t>0,\,x\in\R^n,$$
and $P_0f:=f$, where $(p_t)_{t>0}$ is the standard heat kernel given by
$$p_t(x,y)=\frac{1}{(4\pi t)^{n/2}}e^{-\frac{|x-y|^2}{4t}},\quad t>0,\,x,y\in\R^n.$$
For $p\in[1,\infty)$ and $s\in(0,1)$, we define the Besov seminorm associated with this semigroup as
$${\rm N}_{s,p}(f):=\bigg(\int_0^\infty t^{-(1+\frac{ps}{2})}\int_{\R^n}P_t(|f-f(x)|^p)(x)\,\d x\d t\bigg)^{1/p}.$$
A direct calculation employing the change-of-variables technique reveals the quantitative relationship between this seminorm and the Gagliardo seminorm:
\begin{equation}\label{Besov-fracSoblev}
{\rm N}_{s,p}(f)^p=\frac{2^{ps}\Gamma(\frac{n+ps}{2})}{\pi^{\frac{n}{2}}} [f]_{{\rm W}^{s,p}}^p,
\end{equation}
and hence, ${\rm W}^{s,p}(\R^n)=\{f\in {\rm L}^p(\R^n):\ {\rm N}_{s,p}(f)<\infty\}$. Combining \eqref{MS} with \eqref{Besov-fracSoblev} yields the following dimension-free MS formula: for every $f\in \cup_{s\in(0,1)}{\rm W}^{s,p}(\R^n)$,
\begin{equation}\label{dimfree-MS}
\lim_{s\rightarrow0^+}s{\rm N}_{s,p}(f)^p=\frac{4}{p}\|f\|_{{\rm L}^p}^p.
\end{equation}
This interpretation allows us to rewrite \eqref{asym-s-perimeter} in dimension-free form using the Besov seminorm $\mathrm{N}_{2s,1}(\cdot)$. Specifically, for $s\in(0,1/2)$, a measurable set $E\subset\mathbb{R}^n$ has finite $s$-perimeter if and only if ${\rm N}_{2s,1}(\1_E)< \infty$, and in this case,
\begin{equation}\label{dimfree-s-per}
\lim_{s\rightarrow0^+}s{\rm N}_{2s,1}(\1_E)=2\mathscr{L}^n(E).
\end{equation}

\  \   \ \  \

In this work, we aim to develop a unifying framework for these studies, based on Dunkl theory. Dunkl theory can be view as a generalization of Fourier
analysis in the Euclidean setting, and provides a far-reaching generalization of classical special functions (e.g., hypergeometric and Bessel functions) within a cohesive analytical framework. Its origins trace back to the harmonic analysis of Lie algebras and symmetric spaces in the mid-20th century, and was later shaped by fundamental contributions from C.F. Dunkl \cite{Dunkl1988,Dunkl1989,Dunkl1991,Dunkl1992}, G.J. Heckman and E.M. Opdam  \cite{HecOpd1987,Hec1987,Opdam88b,Opdam88a}, and I. Cherednik \cite{Cherednik1991,Cherednik1994}. Since its inception, Dunkl theory has grown substantially, with significant advances documented in  \cite{ADH2019,DaiFeng2016,GalYor2006,GalYor2005,ThaXu07,ThaXu05,Rosler03,Rosler1999,Rosler1998,RoslerVoit1998,Jeu} for instance. For comprehensive overviews, we refer to the surveys \cite{Anker2017,Rosler2003} and the monographs \cite{DunklXu2014,DaiXu2013}.

\subsection{Main results}\label{sec-main}\hskip\parindent
To formulate our main results, we first introduce the necessary notations and concepts; refer to Section \ref{sec-2.1} for details.

Let $(P_t^\kappa)_{t \geq 0}$ denote the Dunkl heat semigroup generated by the Dunkl Laplacian $\Delta_\kappa$. This semigroup admits the Dunkl heat kernel $(p_t^\kappa)_{t > 0}$ with respect to the weighted measure $\mu_\kappa$. For $p\in[1,\infty)$, let $\mathrm{L}^p(\mu_\kappa)$ be the standard Lebesgue space over $\mathbb{R}^n$ with respect to the measure $\mu_\kappa$, endowed with the norm $\|\cdot\|_{\mathrm{L}^p(\mu_\kappa)}$.

We now introduce Besov spaces in the context of Dunkl theory. For a more general discussion of Besov spaces and their properties, we refer to Appendix \ref{app-A}.
\begin{definition}\label{besov}
Let $p\in[1,\infty)$ and $s\in(0,\infty)$. The Besov space associated with the Dunkl heat semigroup $(P_t^\kappa)_{t\geq0}$ (or Dunkl Laplacian $\Delta_\kappa$) is defined as
$${\rm B}_{s,p}^\kappa(\R^n)=\big\{f\in {\rm L}^p(\mu_\kappa):\ {\rm N}^\kappa_{s,p}(f)<\infty\big\},$$
where ${\rm N}^\kappa_{s,p}(\cdot)$ is the Besov seminorm given by
$${\rm N}^\kappa_{s,p}(f)=\bigg(\int_0^\infty t^{-(1+\frac{sp}{2})}\int_{\R^n}P_t^\kappa(|f-f(x)|^p)(x)\,\mu_\kappa(\d x)\d t\bigg)^{1/p}.$$
\end{definition}
These spaces generalize classical Besov spaces (studied in the pioneering works of M.H. Taibleson \cite{Taibleson1964,Taibleson1965}) to the Dunkl framework.  Recent developments have further extended this framework to various operators, including the Kolmogorov--Fokker--Planck operator on $\R^n$ \cite{BGT2022,GT2020a,GT2020b}, the sub-Laplacian on Carnot groups \cite{GT2024}, and the Baouendi--Grushin operator on Grushin spaces \cite{ZWLL2024+,LiWang2024}.

Our first main result establishes a dimension-free MS-type formula.
\begin{theorem}\label{MS-dunkl}
Let $p\in[1,\infty)$. Then, for every $f\in \cup_{0<s<1}{\rm B}_{s,p}^\kappa(\R^n)$,
$$\lim_{s\rightarrow0^+}s{\rm N}^\kappa_{s,p}(f)^p=\frac{4}{p}\|f\|_{{\rm L}^p(\mu_\kappa)}^p.$$
\end{theorem}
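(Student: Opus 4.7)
The plan is to reduce the theorem to the single asymptotic identity
\[
\lim_{t\to\infty} F(t)=2\,\|f\|_{{\rm L}^p(\mu_\kappa)}^p,\qquad F(t):=\int_{\R^n}P_t^\kappa\bigl(|f-f(x)|^p\bigr)(x)\,\mu_\kappa(\d x),
\]
and then to fold it against the time kernel $t^{-(1+sp/2)}$. First I split ${\rm N}_{s,p}^\kappa(f)^p=\int_0^1+\int_1^\infty$. Choosing $s_0\in(0,1)$ with $f\in{\rm B}_{s_0,p}^\kappa(\R^n)$ and using the monotonicity $t^{-(1+sp/2)}\leq t^{-(1+s_0p/2)}$ for $t\in(0,1]$ and $s<s_0$, the small-time piece is dominated by ${\rm N}_{s_0,p}^\kappa(f)^p$ uniformly in $s$, so multiplying by $s$ sends it to zero. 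For the tail, the measure $\nu_s(\d t):=\frac{sp}{2}\,t^{-(1+sp/2)}\1_{(1,\infty)}(t)\,\d t$ is a probability measure on $(1,\infty)$ whose mass escapes to $+\infty$ as $s\to 0^+$ since $\nu_s((1,T])=1-T^{-sp/2}\to 0$ for every fixed $T$, and
\[
s\int_1^\infty t^{-(1+sp/2)}F(t)\,\d t=\tfrac{2}{p}\int_1^\infty F(t)\,\nu_s(\d t).
\]
Combining $|a-b|^p\leq 2^{p-1}(|a|^p+|b|^p)$ with the mass-preserving identity $\int P_t^\kappa g\,\d\mu_\kappa=\int g\,\d\mu_\kappa$ (a consequence of the symmetry $p_t^\kappa(x,y)=p_t^\kappa(y,x)$ and stochastic completeness $P_t^\kappa\1=\1$) yields the uniform bound $F(t)\leq 2^p\|f\|_{{\rm L}^p(\mu_\kappa)}^p$, after which a standard $\varepsilon$-argument delivers $\int F\,\d\nu_s\to\lim_{t\to\infty}F(t)$ once this latter limit exists.

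The bulk of the work is therefore $\lim_{t\to\infty}F(t)=2\|f\|_{{\rm L}^p(\mu_\kappa)}^p$. I would first establish it for $f=g$ bounded with compact support in some ball $B_R$. Decomposing $F_g(t)$ according to whether $(x,y)\in B_R\times B_R$ or not, the scaling identity $p_t^\kappa(x,y)=t^{-(n/2+\chi)}p_1^\kappa(x/\sqrt{t},y/\sqrt{t})$ together with $\|p_1^\kappa\|_\infty<\infty$ forces the diagonal block to be $O(t^{-(n/2+\chi)})$, while symmetry collapses the two off-diagonal blocks into $2\int_{B_R}|g|^p\,P_t^\kappa(\1_{B_R^c})\,\d\mu_\kappa$. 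Stochastic completeness and $P_t^\kappa(\1_{B_R})(x)\leq\|p_t^\kappa\|_\infty\mu_\kappa(B_R)\to 0$ uniformly on $B_R$ then give $P_t^\kappa(\1_{B_R^c})\to 1$ uniformly on $B_R$, so $F_g(t)\to 2\|g\|_{{\rm L}^p(\mu_\kappa)}^p$.

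To pass to arbitrary $f\in{\rm L}^p(\mu_\kappa)\supset\bigcup_s{\rm B}_{s,p}^\kappa(\R^n)$ I would use the uniform-in-$t$ Minkowski-type estimate
\[
\bigl|F(t)^{1/p}-F_g(t)^{1/p}\bigr|^p\leq\int\!\!\int p_t^\kappa(x,y)\,\bigl|(f-g)(x)-(f-g)(y)\bigr|^p\,\d\mu_\kappa(x)\d\mu_\kappa(y)\leq 2^p\|f-g\|_{{\rm L}^p(\mu_\kappa)}^p,
\]
again a consequence of mass preservation; this is what replaces any appeal to density of smooth functions in ${\rm B}_{s,p}^\kappa$, in keeping with the simplified approach announced in the abstract. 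The main potential obstacle is that $F$ carries no obvious monotonicity in $t$, so the tail limit really does rely on decay estimates for the Dunkl heat kernel; however the handful of facts required—symmetry, stochastic completeness, scaling, and $\|p_1^\kappa\|_\infty<\infty$—are all standard and reviewed in Section~2 of the paper.
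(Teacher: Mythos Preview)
Your proof is correct and takes a genuinely different route from the paper's. The paper handles the long-time piece $\int_1^\infty$ by splitting cases: for $p=1$ it sandwiches the integral between explicit upper and lower bounds (Lemmas~3.3--3.4), while for $p>1$ it compares $\int_1^\infty t^{-(1+ps/2)}F(t)\,\d t$ directly to $\int_1^\infty t^{-(1+ps/2)}\cdot 2\|f\|_{L^p(\mu_\kappa)}^p\,\d t$, controlling the cross terms $|f(x)|^{p-1}|f(y)|$ via the $L^1\!\to\!L^p$ ultracontractivity (Lemma~2.2(ii)), first for simple functions and then for general $f$ using the same $\mathcal L^p$-Minkowski estimate you invoke. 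Your approach is more unified: you prove the stronger pointwise-in-$t$ statement $\lim_{t\to\infty}F(t)=2\|f\|_{L^p(\mu_\kappa)}^p$ for every $f\in L^p(\mu_\kappa)$---a fact the paper never isolates---and then integrate against the probability measures $\nu_s$ whose mass escapes to infinity, which treats all $p\in[1,\infty)$ at once and needs only the on-diagonal decay $\|p_t^\kappa\|_\infty\preceq t^{-(n/2+\chi)}$ rather than the full $L^1\!\to\!L^p$ bound. Both arguments avoid density in ${\rm B}_{s,p}^\kappa$ by approximating only at the $L^p$ level (the paper via simple functions, you via bounded compactly supported ones), but your packaging through the limit of $F(t)$ is cleaner and yields a mildly sharper intermediate result.
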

\begin{remark}\label{remark-MS}
(i) In particular, if $\kappa\equiv0$, then the Dunkl heat semigroup $(P_t^\kappa)_{t\geq0}$ reduces to the standard heat semigroup $(P_t)_{t\geq0}$.  Consequently, Theorem \ref{MS-dunkl} recovers the classical result \eqref{MS} concerning \eqref{Besov-fracSoblev}.

(ii) Our approach is motivated by the method developed in \cite{BGT2022}, which relies on approximating functions in Besov norm through the density of Schwartz functions in the corresponding Besov space. However, in the Dunkl framework, such a density property is not generally available. To overcome this difficulty, we develop a novel technique based on approximation by simple functions at the level of the $L^p$-norm, significantly extending the methodology of \cite{BGT2022}. This approach is not only simpler but also more robust, making it applicable to broader settings beyond the Dunkl framework.
\end{remark}

Now, we turn to introduce the nonlocal perimeter in the Dunkl setting. For any $s>0$ and any pair of disjoint measurable sets $A,B\subset\R^n$, we set
$$L_s^\kappa(A,B):=\int_0^\infty t^{-(1+s)}\int_A P_t^\kappa\mathbbm{1}_B(x)\, \mu_\kappa(\d x)\d t.$$
\begin{definition}\label{rel-s-per}
Let $\Omega\subset\R^n$ be an open set and $s\in(0,1/2)$.  For a measurable set $E\subset\R^n$, the $s$-D-perimeter of $E$ relative to $\Omega$ is defined as
$${\rm Per}_s^\kappa(E,\Omega):=2\big[ L_s^\kappa(E\cap \Omega,E^c\cap \Omega)+L_s^\kappa(E\cap \Omega,E^c\cap \Omega^c)+L_s^\kappa(E\cap \Omega^c,E^c\cap \Omega)\big].$$
In particular, if $\Omega=\R^n$, we simply write ${\rm Per}_s^\kappa(E)$ instead of ${\rm Per}_s^\kappa(E,\R^n)$ and call it the $s$-D-perimeter of $E$.
\end{definition}

From Definition \ref{rel-s-per}, it is easy to see that ${\rm Per}_s^\kappa(E,\Omega)={\rm Per}_s^\kappa(E^c,\Omega)$ by the symmetry \eqref{ker-symmetry}, and if
$E\subset\Omega$ or $E^c\subset\Omega$ in addition, then ${\rm Per}_s^\kappa(E,\Omega)={\rm Per}_s^\kappa(E)$.  For more elementary properties, see Appendix \ref{app-B}.
\begin{remark}\label{rk-def}
(1) Let $\kappa\equiv0$. For disjoint measurable sets $A,B\subset\R^n$,  by a change of variables argument, we have
$$L_s^0(A,B)=\frac{2^{2s}\Gamma(\frac{n}{2}+s)}{\pi^{\frac{n}{2}}}\int_A \int_B \frac{1}{|x-y|^{n+2s}}\,\d y \d x.$$
Consequently,
$${\rm Per}_s^0(E,\Omega)=\frac{2^{2s+1}\Gamma(\frac{n}{2}+s)}{\pi^{\frac{n}{2}}}{\rm Per}_s(E,\Omega),$$
provided ${\rm Per}_s(E,\Omega)$ exists. Note that the constant $\frac{2^{2s+1}\Gamma(n/2+s)}{\pi^{n/2}}$ converges to $\frac{4}{\omega_{n-1}}$,  as $s\rightarrow0^+$.

(2)  For $0<s<1/2$, a measurable set $E\subset\R^n$ has finite $s$-D-perimeter if and only if $\mathbbm{1}_E\in {\rm B}_{2s,1}^\kappa(\R^n)$, with the identity
$${\rm Per}_s^\kappa(E)={\rm N}^\kappa_{2s,1}(\mathbbm{1}_E).$$
By Theorem \ref{MS-dunkl}, if $E\subset\R^n$ has finite $s_0$-D-perimeter for some $s_0\in (0,1/2)$, then
\begin{equation}\label{lim-per}
\lim_{s\rightarrow0^+}s{\rm Per}_s^\kappa(E)=2\mu_\kappa(E).
\end{equation}
In this case, if $\kappa\equiv0$, then \eqref{lim-per} coincides with the dimensional-free limit \eqref{dimfree-s-per}.
\end{remark}

We proceed to present our second main result, which characterizes the limiting behavior of the $s$-D-perimeter as $s\rightarrow0^+$. To state the theorem, we introduce additional notation and key concepts. Let $E\subset\R^n$ be a measurable set, and let $B_d(x,r)$ denote the ball in $\R^n$ with respect to the pseudo-metric $d$ (see Section \ref{sec-2.1}) with center $x\in\R^n$ and radius $r>0$.  We define the function $\Lambda^\kappa_E$ as follows:
\begin{equation}\label{Lambda-E}
\Lambda^\kappa_E(x,r,s)=\int_1^\infty P_t^\kappa(\mathbbm{1}_{E\cap B_d(x,r)^c})(x)\,\frac{\d t}{t^{1+s}},\quad x\in\R^n,\,r,s>0,
\end{equation}
Refer to Remark \ref{rk-s-per-lim}(4) for $\Lambda^\kappa_E$ in the particular $\kappa\equiv0$ case. From the observations in Lemma \ref{property-Lambda-E}, we see that if the limit $\lim_{s\rightarrow0^+}s\Lambda^\kappa_E(x_0,r_0,s)$ exists for some pair $(x_0,r_0)\in\R^n\times(0,\infty)$, then the limit $\lim_{s\rightarrow0^+}s\Lambda^\kappa_E(x,r,s)$ exists for all $(x,r)\in\R^n\times(0,\infty)$, is independent of both $x$ and $r$, and takes values in the interval $[0,1]$. In such cases,  we denote this limit by $\Xi^\kappa_E$:
\begin{equation}\label{Xi}
\Xi^\kappa_E:=\lim_{s\rightarrow0^+}s\Lambda^\kappa_E(x,r,s),\quad x\in\R^n,\,r>0.
\end{equation}
In particular, for $E=\R^n$, $\Xi^\kappa_{\R^n}$ always exists and equals $1$; see Remark \ref{rk-Xi-Rn}. For convenience, we say that $\Xi^\kappa_E$ exists if the above limit \eqref{Xi} exists for some pair $(x,r)\in\R^n\times(0,\infty)$.

\begin{theorem}\label{lim-rel-s-per}
Let $\Omega\subset\R^n$ be an open and bounded set, and let $E\subset\R^n$ be measurable such that ${\rm Per}_{s_0}^\kappa(E,\Omega)<\infty$ for some $s_0\in(0,1/2)$.
 Suppose $\Xi^\kappa_E$ exists. Then, the limit $\lim_{s\rightarrow0^+}s{\rm Per}_s^\kappa(E,\Omega)$ exists, and
\begin{equation}\begin{split}\label{lim-rel-s-per-1}
\lim_{s\rightarrow0^+}s{\rm Per}_s^\kappa(E,\Omega)&=2\Xi^\kappa_{E^c}\mu_\kappa(E\cap \Omega)+2\Xi^\kappa_E\mu_\kappa(E^c\cap\Omega)\\
&=2\big[(1-\Xi^\kappa_E)\mu_\kappa(E\cap \Omega)+\Xi^\kappa_E\mu_\kappa(E^c\cap\Omega)\big].
\end{split}\end{equation}
\end{theorem}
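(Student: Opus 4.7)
The plan is to reorganize the three pieces defining ${\rm Per}_s^\kappa(E,\Omega)$ using the symmetry $L_s^\kappa(A,B)=L_s^\kappa(B,A)$ (immediate from Fubini and the symmetry of $p_t^\kappa$), namely
$$\tfrac12{\rm Per}_s^\kappa(E,\Omega) = L_s^\kappa(E\cap\Omega,E^c) + L_s^\kappa(E^c\cap\Omega,E) - L_s^\kappa(E\cap\Omega,E^c\cap\Omega),$$
and then to analyze each summand separately through a short-time/long-time split of the $t$-integral. The ``both-legs-in-$\Omega$'' term is designed to vanish in the limit: on $(0,1]$, the bound $t^{-(1+s)}\le t^{-(1+s_0)}$ combined with the finiteness of ${\rm Per}_{s_0}^\kappa(E,\Omega)$ gives an $s$-uniform bound, which kills $s\cdot(\,\cdot\,)$ as $s\to0^+$; on $[1,\infty)$, the standard Dunkl ultracontractivity $\|P_t^\kappa\|_{L^1(\mu_\kappa)\to L^\infty(\mu_\kappa)}\lesssim t^{-(n/2+\chi)}$ combined with $\mu_\kappa(\Omega)<\infty$ produces a tail that also vanishes after multiplication by $s$. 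Hence $\lim_{s\to0^+}sL_s^\kappa(E\cap\Omega,E^c\cap\Omega)=0$.

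For the remaining two ``main'' terms, I claim $\lim_{s\to0^+}sL_s^\kappa(E\cap\Omega,E^c)=\Xi_{E^c}\mu_\kappa(E\cap\Omega)$ and, symmetrically, $\lim_{s\to0^+}sL_s^\kappa(E^c\cap\Omega,E)=\Xi_{E}\mu_\kappa(E^c\cap\Omega)$. The short-time contributions are absorbed as before, using that the sum $L_{s_0}^\kappa(E\cap\Omega,E^c\cap\Omega)+L_{s_0}^\kappa(E\cap\Omega,E^c\cap\Omega^c)=L_{s_0}^\kappa(E\cap\Omega,E^c)$ is finite by hypothesis. For the long-time contribution I apply Fubini to put the $x$-integral outside and, for each fixed $x$, split
$$s\int_1^\infty t^{-(1+s)}P_t^\kappa\mathbbm{1}_{E^c}(x)\,\d t = s\Lambda_{E^c}(x,r,s) + s\int_1^\infty t^{-(1+s)}P_t^\kappa\mathbbm{1}_{E^c\cap B_d(x,r)}(x)\,\d t.$$
The first summand converges pointwise to $\Xi_{E^c}$: from $\Lambda_E+\Lambda_{E^c}=\Lambda_{\R^n}$, the assumed existence of $\Xi_E$ and $\Xi_{\R^n}=1$ (Remark \ref{rk-Xi-Rn}), one obtains that $\Xi_{E^c}$ exists and equals $1-\Xi_E$. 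The second summand is dominated by $Cs\mu_\kappa(B_d(x,r))/(s+n/2+\chi)$ via ultracontractivity, hence vanishes. Since the whole long-time integrand in $x$ is bounded by $s\int_1^\infty t^{-(1+s)}\,\d t=1$, and $\mu_\kappa(E\cap\Omega)<\infty$, dominated convergence yields the claim. Combining the three limits with the factor $2$ gives the first equality in \eqref{lim-rel-s-per-1}, and $\Xi_{E^c}=1-\Xi_E$ gives the second.

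The main obstacle is keeping the order of integration straight: Fubini is needed to promote the pointwise large-time asymptotic of $s\Lambda_{E^c}(x,r,s)$ to an integrated statement against $\mathbbm{1}_{E\cap\Omega}\mu_\kappa$, while one must resist the temptation to bound $P_t^\kappa\mathbbm{1}_{E^c}(x)$ pointwise for small $t$ (which would implicitly demand a regularity of $E$ that is not assumed here). All short-time information must instead be absorbed in time-integrated form through the hypothesis ${\rm Per}_{s_0}^\kappa(E,\Omega)<\infty$, and the long-time decomposition must be arranged so that the ``ultracontractive'' ball contribution and the $\Xi_{E^c}$-producing far-field contribution separate cleanly.
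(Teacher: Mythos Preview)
Your proof is correct and shares the paper's core ingredients (short/long-time split, ultracontractivity for finite-measure pieces, identification of the limit via $\Lambda_E$, dominated convergence), but your algebraic rearrangement
$$\tfrac12{\rm Per}_s^\kappa(E,\Omega)=L_s^\kappa(E\cap\Omega,E^c)+L_s^\kappa(E^c\cap\Omega,E)-L_s^\kappa(E\cap\Omega,E^c\cap\Omega)$$
is a mild streamlining compared to the paper's route. The paper keeps the original three summands and therefore has to handle $L_s^\kappa(E\cap\Omega^c,E^c\cap\Omega)$ as it stands: it fixes a reference ball $B_d(x_0,R)\supset\Omega$, drops $E\cap\Omega^c\cap B_d(x_0,R)$ via Lemma~\ref{lemma-lim-L}(2), and then needs an extra estimate (the kernel upper bound \eqref{kernel-bound} together with Lemma~\ref{int-ker-bd}) to pass from $E\cap B_d(x_0,R)^c$ to $E\cap B_d(x,R/2)^c$ before it can recognize $\Lambda_E(x,R/2,s)$. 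In your decomposition the full set $E$ (resp.\ $E^c$) is already the outer leg, so the split $E=(E\cap B_d(x,r))\cup(E\cap B_d(x,r)^c)$ produces $\Lambda_E(x,r,s)$ directly, with no reference-ball transfer step. The paper's version is a touch longer but makes the role of the preparatory Lemma~\ref{lemma-lim-L} more explicit; yours is slightly more economical and uses only ultracontractivity for the near-field piece.
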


Some remarks on Theorem \ref{lim-rel-s-per} are in order.
\begin{remark}\label{rk-s-per-lim}
(1) Unlike previous works \cite{DFPV, CG2024, CCLMP}, Theorem \ref{lim-rel-s-per} does not require additional regularity (e.g., $C^{1,\alpha}$ or Lipschitz) on $\partial\Omega$. This improvement stems from our direct proof technique, which avoids reliance on Theorem \ref{MS-dunkl}.

(2) Since $\Xi^\kappa_E\in[0,1]$ by Lemma \ref{property-Lambda-E}(a), the term in the square brackets of \eqref{lim-rel-s-per-1} is a convex combination of
$\mu_\kappa(E\cap \Omega)$ and $\mu_\kappa(E^c\cap\Omega)$.

(3) If $E$ is bounded, then by Lemma \ref{lemma-lim-L}(2) and Lemma \ref{property-Lambda-E}, \eqref{lim-rel-s-per-1} simplifies to
$$\lim_{s\rightarrow0^+}s{\rm Per}_s^\kappa(E,\Omega)=2\mu_\kappa(E\cap \Omega).$$
If further $E\subset\Omega$,  this reduces to the earlier result \eqref{lim-per}.

(4)  Consider the particular case when $\kappa\equiv0$. It follows from direct calculation that $\Lambda^0_E$ admits an explicit form:
\begin{equation*}\begin{split}
\Lambda^0_E(x,r,s)&=\int_1^\infty \int_{\R^n}p_t(x,y)\mathbbm{1}_{E\setminus B(x,r)}(y)\,\d y\frac{\d t}{t^{1+s}}\\
&=4^{s}\pi^{-\frac{n}{2}} \int_{E\setminus B(x,r)}\frac{\gamma(\frac{n}{2}+s,\frac{|x-y|^2}{4})}{|x-y|^{n+2s}}\,\d y,\quad  r,s>0,\,x\in\R^n,
\end{split}\end{equation*}
where $B(x,r)=\{y\in\R^n:\ |y-x|<r\}$, and $\gamma$ is the incomplete Gamma function defined as
$$\gamma(p,u)=\int_0^u e^{-t}t^{p-1}\,\d t,\quad p>0,\,u\geq0.$$
For more properties of the incomplete Gamma function, refer to \cite{Jameson} for instance.
However, let us consider the modified function:
$$\widehat{\Lambda}_E(x,r,s):=\int_0^\infty \int_{\R^n}p_t(x,y)\mathbbm{1}_{E\setminus B(x,r)}(y)\,\d y\frac{\d t}{t^{1+s}},\quad  r,s>0,\,x\in\R^n.$$
Similar calculation leads to that
\begin{equation*}\begin{split}
\lim_{s\rightarrow0^+}s\widehat{\Lambda}_E(0,1,s)&=\lim_{s\rightarrow0^+}\frac{4^s\Gamma(\frac{n}{2}+s)}{\pi^{\frac{n}{2}}}s\int_{E\setminus B_1}\frac{1}{|y|^{n+2s}}\,\d y\\
&=\frac{\Gamma(\frac{n}{2})}{\pi^{\frac{n}{2}}}\iota(E),
\end{split}\end{equation*}
provided the limit and $\iota(E)$ (defined in \eqref{weight-density}) exist. Functions analogous to $\widehat{\Lambda}_E$, defined in terms of the heat kernel associated with the Laplacian on Riemannian manifolds and ${\rm RCD}(K,\infty)$ spaces, have been recently studied in \cite{CG2024}.
\end{remark}

We now present a converse to Theorem \ref{lim-rel-s-per} for the case where $\Omega$ is $G$-invariant, meaning that  $gx\in\Omega$ for all $g\in G,\,x\in\Omega$. This result establishes necessary and sufficient conditions for the existence of the limit $\lim_{s\rightarrow0^+}s{\rm Per}_s^\kappa(E,\Omega)$ and provides explicit formulas connecting it to the $\mu_\kappa$-measure of $E$ and its complement in $\Omega$. To ensure the result holds, we require a mild regularity condition on the boundary of $\Omega$. For this purpose, we introduce the following notation:  Given any pair of  disjoint sets $E,F\subset\R^n$, define the $r$-neighborhood of $F$ relative to $E$ as
$${\rm D}_r^E(F)=\{x\in E:\ d(x,F)\leq r\},\quad r\geq0,$$
where $ d(x,A)=\inf_{y\in A}d(x,y)$ for any $A\subset\R^n$ and any $x\in \R^n$.
\begin{theorem}\label{lim-rel-s-per-converse}
Let $\Omega\subset\R^n$ be an open, bounded, and $G$-invariant set, and let $E\subset\R^n$ be measurable such that ${\rm Per}_{s_0}^\kappa(E,\Omega)<\infty$ for some $s_0\in(0,1/2)$. Suppose that   there exist a constant $c_\ast>0$ and some $\eta>2s_0$ such that
\begin{equation}\begin{split}\label{boundary}
\mu_\kappa\big({\rm D}_r^\Omega(\Omega^c)\big)\leq c_\ast\min\{r^\eta, 1\},\quad r\in [0,1].
\end{split}\end{equation}
\begin{itemize}
\item[(a)](\emph{Balanced Measure Case}) If $\mu_\kappa(E\cap \Omega)=\mu_\kappa(E^c\cap \Omega)$, then the limit $\lim_{s\rightarrow0^+}s{\rm Per}_s^\kappa(E,\Omega)$ exists and satisfies
\begin{equation*}\label{lim-rel-s-per-2}
\lim_{s\rightarrow0^+}s{\rm Per}_s^\kappa(E,\Omega)=2\mu_\kappa(E\cap \Omega)=2\mu_\kappa(E^c\cap \Omega).
\end{equation*}

\item[(b)](\emph{Unbalanced Measure Case}) If $\mu_\kappa(E\cap \Omega)\neq\mu_\kappa(E^c\cap \Omega)$, then the limit $\lim_{s\rightarrow0^+}s{\rm Per}_s^\kappa(E,\Omega)$ exists if and only if $\Xi^\kappa_E$ exists. In this case,
    \begin{equation*}\label{lim-rel-s-per-3}
     \Xi^\kappa_E=\frac{\lim_{s\rightarrow0^+}s{\rm Per}_s^\kappa(E,\Omega)-2\mu_\kappa(E\cap \Omega)}{2[\mu_\kappa(E^c\cap \Omega)-\mu_\kappa(E\cap \Omega)]}.
    \end{equation*}
\end{itemize}
\end{theorem}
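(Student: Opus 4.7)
The forward direction of part (b) — that the existence of $\Xi_E$ forces the existence of $\lim_{s\to 0^+} s\,{\rm Per}_s^\kappa(E,\Omega)$ with the prescribed value — is delivered directly by Theorem \ref{lim-rel-s-per}. I focus therefore on part (a) and on the converse of part (b), both of which I would obtain from a single sharper decomposition valid \emph{without} assuming the existence of $\Xi_E$: for any fixed $x_0\in\R^n$ and any $r_0>0$ with $\Omega\subset B_d(x_0,r_0)$,
$$s\,{\rm Per}_s^\kappa(E,\Omega) \;=\; 2\mu_\kappa(E\cap\Omega) \;+\; 2\,s\Lambda_E(x_0,r_0,s)\bigl[\mu_\kappa(E^c\cap\Omega)-\mu_\kappa(E\cap\Omega)\bigr] \;+\; \varepsilon(s),$$
with $\varepsilon(s)\to 0$ as $s\to 0^+$. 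Once this is established, the two cases follow at once: in (a) the middle term vanishes identically, yielding $\lim s\,{\rm Per}_s^\kappa(E,\Omega) = 2\mu_\kappa(E\cap\Omega)$; in (b) the coefficient of $s\Lambda_E(x_0,r_0,s)$ is a fixed non-zero constant, so $\lim s\,{\rm Per}_s^\kappa(E,\Omega)$ and $\Xi_E = \lim s\Lambda_E(x_0,r_0,s)$ exist simultaneously and the formula for $\Xi_E$ is obtained by rearrangement.

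\textbf{Main steps.} Using the symmetry \eqref{ker-symmetry} and $P_t^\kappa\mathbbm{1}=1$, first rewrite the three-term definition as
$$\tfrac{1}{2}{\rm Per}_s^\kappa(E,\Omega) \;=\; L_s^\kappa(E\cap\Omega,E^c)+L_s^\kappa(E^c\cap\Omega,E)-L_s^\kappa(E\cap\Omega,E^c\cap\Omega).$$
For the first term, for each $x\in E\cap\Omega$ split $\mathbbm{1}_{E^c}=\mathbbm{1}_{E^c\cap B_d(x,r_0)^c}+\mathbbm{1}_{E^c\cap B_d(x,r_0)}$ and split the time integral at $t=1$; the $(t\ge 1,\,B_d^c)$ piece is exactly $\Lambda_{E^c}(x,r_0,s)$, while the remaining pieces produce small remainders addressed in Step 3 below. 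This yields
$$s\,L_s^\kappa(E\cap\Omega,E^c) \;=\; \int_{E\cap\Omega} s\Lambda_{E^c}(x,r_0,s)\,\mu_\kappa(dx) + o(1),$$
and likewise $s L_s^\kappa(E^c\cap\Omega,E) = \int_{E^c\cap\Omega} s\Lambda_{E}(x,r_0,s)\,\mu_\kappa(dx) + o(1)$. Next, Lemma \ref{property-Lambda-E} provides the base-point-independence $s\Lambda_E(x,r_0,s)-s\Lambda_E(x_0,r_0,s)\to 0$ pointwise as $s\to0^+$ together with uniform boundedness in $[0,1]$; applying dominated convergence on the bounded set $\Omega$, and invoking the elementary identity $s(\Lambda_E+\Lambda_{E^c})(x,r_0,s)\to 1$ (a consequence of $s\int_1^\infty t^{-(1+s)}\,dt=1$ and the vanishing of $s\int_1^\infty t^{-(1+s)}P_t^\kappa\mathbbm{1}_{B_d(x,r_0)}(x)\,dt$ via polynomial decay of the heat semigroup on bounded sets), converts the two integrals above into $\mu_\kappa(E\cap\Omega)[1-s\Lambda_E(x_0,r_0,s)]+o(1)$ and $\mu_\kappa(E^c\cap\Omega)\,s\Lambda_E(x_0,r_0,s)+o(1)$ respectively. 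Finally $s L_s^\kappa(E\cap\Omega,E^c\cap\Omega)\to 0$ by splitting at $t=1$: the short-time part is controlled by $s\int_0^1 t^{-(1+s)}f(t)\,dt\le s\int_0^1 t^{-(1+s_0)}f(t)\,dt = s\cdot L_{s_0}^\kappa(E\cap\Omega,E^c\cap\Omega)$, finite thanks to ${\rm Per}_{s_0}^\kappa(E,\Omega)<\infty$, and the long-time part is $o(1)$ by polynomial decay of $P_t^\kappa\mathbbm{1}_{E^c\cap\Omega}$ on the bounded set $E^c\cap\Omega$.

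\textbf{Where the boundary regularity enters.} The remainders of Step 2 hide two short-time "leakage" terms of the form $s\int_0^1 t^{-(1+s)}P_t^\kappa\mathbbm{1}_{\Omega^c}(x)\,dt$ integrated over $E\cap\Omega$ (and the $E\leftrightarrow E^c$ analogue), arising from the discrepancy between $\mathbbm{1}_{E^c}$ and its restriction $\mathbbm{1}_{E^c\cap\Omega}$. The $G$-invariance of $\Omega$ together with Gaussian-type upper bounds for the Dunkl heat kernel $p_t^\kappa$ yield $P_t^\kappa\mathbbm{1}_{\Omega^c}(x)\le C\,\exp\bigl(-c\,d(x,\Omega^c)^2/t\bigr)$; combining this with the volume bound \eqref{boundary} via the layer-cake formula produces, for all $t>0$,
$$\int_\Omega P_t^\kappa\mathbbm{1}_{\Omega^c}(x)\,\mu_\kappa(dx) \;\le\; C(\Omega)\,\min\{t^{\eta/2},\,1\}.$$
Hence the leakage contribution is bounded by $s\int_0^1 t^{-(1+s)+\eta/2}\,dt = s/(\eta/2-s)$, which tends to zero as $s\to 0^+$ \emph{precisely because} $\eta>2s_0$ keeps $\eta/2-s$ bounded away from $0$ for $s\in(0,s_0]$ (and simultaneously makes the estimate integrable at the reference exponent $s_0$). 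I expect this layer-cake/Gaussian step to be the main technical obstacle, since it is the unique place where the three ingredients — the Dunkl heat-kernel upper bound in the pseudo-metric $d$, the $G$-invariance of $\Omega$, and the boundary-volume bound \eqref{boundary} — must be combined simultaneously.
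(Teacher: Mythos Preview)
Your proposal is correct and reaches the same asymptotic identity as the paper, but the organization differs in an instructive way. The paper first reduces to the bounded-set case: using the boundary assumption via a dyadic decomposition over the strips $\{2^{-(j+1)}<d(\cdot,\Omega^c)\le 2^{-j}\}$, it proves $L_{s_0}^\kappa(E\cap\Omega,E\cap\Omega^c)<\infty$, hence ${\rm Per}_{s_0}^\kappa(E\cap\Omega,\Omega)<\infty$, and then invokes Theorem~\ref{MS-dunkl} to get $\lim_{s\to0^+}s\,{\rm Per}_s^\kappa(E\cap\Omega,\Omega)=2\mu_\kappa(E\cap\Omega)$; the remainder ${\rm Per}_s^\kappa(E,\Omega)-{\rm Per}_s^\kappa(E\cap\Omega,\Omega)=2[L_s^\kappa(E^c\cap\Omega,E\cap\Omega^c)-L_s^\kappa(E\cap\Omega,E\cap\Omega^c)]$ is then shown to equal $2[\mu_\kappa(E^c\cap\Omega)-\mu_\kappa(E\cap\Omega)]\,\Xi_E(s)+o(1)$ via the same base-point-independence estimates you use. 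Your decomposition $\tfrac{1}{2}{\rm Per}_s^\kappa(E,\Omega)=L_s^\kappa(E\cap\Omega,E^c)+L_s^\kappa(E^c\cap\Omega,E)-L_s^\kappa(E\cap\Omega,E^c\cap\Omega)$ bypasses the appeal to Theorem~\ref{MS-dunkl} altogether; your layer-cake bound $\int_\Omega P_t^\kappa\mathbbm{1}_{\Omega^c}\,d\mu_\kappa\le C\,t^{\eta/2}$ is the continuous analogue of the paper's dyadic sum $\sum_j e^{-c\,4^{-j}/t}2^{-\eta j}$, and the two are equivalent. The paper's route is conceptually cleaner in that it isolates the known MS limit for the truncated set $E\cap\Omega$; yours is more self-contained and makes the role of \eqref{boundary}---killing a short-time leakage term already multiplied by $s$---more transparent. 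One minor imprecision: the pointwise bound $P_t^\kappa\mathbbm{1}_{\Omega^c}(x)\le C\,e^{-c\,d(x,\Omega^c)^2/t}$ follows from \eqref{kernel-bound} and Lemma~\ref{int-ker-bd} alone; $G$-invariance of $\Omega$ is not needed at that step (in the paper it is used only to ensure $d(x,\Omega^c)>0$ for every $x\in\Omega$, something your integrated layer-cake argument does not require).
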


Some remarks on Theorem \ref{lim-rel-s-per-converse} are in order.
\begin{remark}
(1) In both Theorem \ref{lim-rel-s-per} and Theorem \ref{lim-rel-s-per-converse}, the assumptions regarding the finiteness of ${\rm Per}_{s_0}^\kappa(E,\Omega)$ for some $s_0\in(0,1/2)$ and the existence of the limit $\lim_{s\rightarrow0^+}s{\rm Per}_s^\kappa(E,\Omega)$ can not be removed, even in the classic case ($\kappa\equiv0$). Counterexamples demonstrating this necessity appear in \cite{DFPV}.

(2) While \eqref{boundary} assumes  $0\leq r\leq1$ for simplicity, Theorem \ref{lim-rel-s-per-converse}  still holds  when \eqref{boundary}  is satisfied for all $0\leq r\leq R$ with some $R>0$. In the special case $\kappa\equiv0$, condition \eqref{boundary} reduces to: there exist a constant $c_\ast>0$ and some $\eta>2s_0$ such that
\begin{equation}\begin{split}\label{boundary-0}
\mathscr{L}^n\big({\rm D}_r^\Omega(\Omega^c)\big)\leq c_\ast\min\{r^\eta, 1\},\quad r\in[0,1],
\end{split}\end{equation}
which is significantly weaker than the boundary regularity requirements in \cite{CG2024,CCLMP,DFPV} already mentioned in Remark \ref{rk-s-per-lim}(1).
 \begin{itemize}
\item[(i)] Condition \eqref{boundary-0} holds if the $(n-\eta)$-dimensional upper Minkowski content of $\partial\Omega$ is bounded by $c_\ast$ and $\partial\Omega$ has Minkowski (or  box-counting) dimension $n-\eta$. For instance, any bounded Lipschitz domain satisfies \eqref{boundary-0} with $\eta=1$. Let $0\leq\sigma\leq n$. For a bounded measurable set $A\subset\R^n$, recall that the upper and lower $\sigma$-dimensional  Minkowski contents of $A$ are given by
    $$\mathcal{M}^\sigma(A)=\limsup_{r\rightarrow0^+}\frac{\mathscr{L}^n(A_r)}{r^{n-\sigma}},\quad \mathcal{M}_\sigma(A)=\liminf_{r\rightarrow0^+}\frac{\mathscr{L}^n(A_r)}{r^{n-\sigma}},$$
    and the upper and lower Minkowski dimensions of $A$ are defined as
    $$\overline{{\rm dim}}_{\mathcal{M}}(A)=\sup\{\sigma\geq0:\ \mathcal{M}^\sigma(A)=\infty\},\quad \underline{{\rm dim}}_{\mathcal{M}}(A)=\sup\{\sigma\geq0:\ \mathcal{M}_\sigma(A)=\infty\},$$
    where $A_r=\{x\in\R^n:\ \inf_{y\in A}|x-y|<r\}$, namely, the open $r$-neighborhood of $A$ with respect to the Euclidean distance. Whenever
    $\overline{{\rm dim}}_{\mathcal{M}}(A)=\underline{{\rm dim}}_{\mathcal{M}}(A)=\sigma$, we say $A$ has Minkowski dimension $\sigma$. See \cite{BisPer2017,Falconer2014}.

\item[(ii)] The boundary condition \eqref{boundary-0} naturally accommodates domains with highly irregular boundaries, including fractal sets. A prototypical example is provided by the Weierstrass function:
    $$W_{a,b}(x):=\sum_{k=0}^\infty a^k \cos(2\pi b^k x),\quad x\in\R,$$
    where $0<a<1$, $b>1$ and $ab>1$. It is well known that $W_{a,b}$ is continuous everywhere but nowhere differentiable, and the graph of $W_{a,b}$ has Minkowski dimension  $2+\log_ba=:\varsigma$; see \cite{KMPY84}, \cite[Chapter 5]{BisPer2017} and \cite[Chapter 11]{Falconer2014} for more details.  Notably, recent works \cite{RenShen2021,Shen2018} resolved a key open problem, showing that when $b$ is further constrained to be an integer, the Hausdorff dimension of the graph of $W_{a,b}$ also equals to $\varsigma$. Furthermore, \cite[Theorem 3.5]{Z2005} proved that both the lower and upper $\varsigma$-dimensional Minkowski contents of the graph of $W_{a,b}$ are bounded by positive constants depending only on $a,b$. For our purposes, consider a bounded domain $\Omega$ in $\R^2$ whose boundary $\partial\Omega$ is given by the graph of $W_{a,b}$ with $b>1$ and $a\in (b^{-1}, b^{-2s_0})$. Such $\Omega$  satisfies condition \eqref{boundary-0}  with $\eta=-\log_ba\in(2s_0,1)$. Figure \ref{Weier} illustrates a domain in $\R^2$ enclosed
    by the curves $\R\ni x\mapsto W_{\frac{1}{2},3}(x)$ and $\R\ni x\mapsto f(x)=x^2-\frac{3}{2}$.
\begin{figure}[H]
  \centering
  \scalebox{0.6}[0.6]{\includegraphics{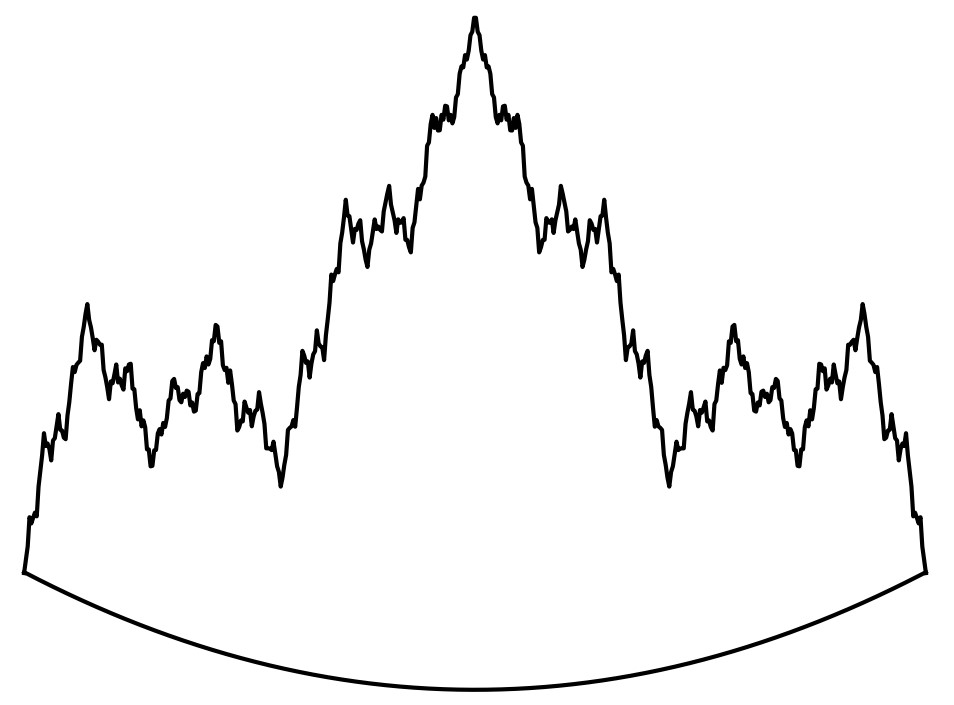}}\\
  \caption{Domain with boundary given by $W_{\frac{1}{2},3}(x)$ and $f(x)=x^2-\frac{3}{2}$.}\label{Weier}
\end{figure}
\end{itemize}

(3) The central difficulties in proving Theorem \ref{lim-rel-s-per-converse} lie in verifying the finiteness of ${\rm Per}_{s_0}^\kappa(E\cap\Omega,\Omega)$ for some $s_0\in(0,1/2)$, where both the $G$-invariance of $\Omega$ and condition \eqref{boundary} play a crucial role. The $G$-invariance requirement stems from the nontrivial interplay between the Euclidean distance and the pseudo-distance in the general setting.
\end{remark}

Motivated by  the recent work \cite{CCLMP}, we introduce a weighted version of the nonlocal perimeter in the Dunkl framework. Let us first define the key components.
 Let
$\nu_\kappa=\mathfrak{c}_\kappa^{-1}e^{-|\cdot|^2/2}\mu_\kappa$
 be the Gaussian-type weighted measure, where $\mathfrak{c}_\kappa$ is the Macdonald--Mehta constant:
\begin{equation*}\label{MM-const}
\mathfrak{c}_\kappa=\int_{\R^n}e^{-\frac{|x|^2}{2}}\,\mu_\kappa(\d x),
\end{equation*}
 and its value can be explicitly computed to be  (see \cite{Etingof,Opdam89})
 $$\mathfrak{c}_\kappa
 =(2\pi)^{\frac{n}{2}}\prod_{\beta\in\mathcal{R}_+}\frac{\Gamma(\kappa_\beta+\chi+1)}{\Gamma(\chi+1)}\in(0,\infty). $$
 Let $s\in(0,1/2)$ and let $\Omega\subset\R^n$ be an open set.  For a measurable set $E\subset\R^n$,  we define the weighted $s$-D-perimeter relative to $\Omega$ as
\begin{equation*}\label{def-w-frac-perimeter}
\widetilde{{\rm Per}}{_s^\kappa}(E,\Omega)=\widetilde{L}_s^\kappa(E\cap\Omega,E^c\cap\Omega)+ \widetilde{L}_s^\kappa(E\cap\Omega,E^c\cap\Omega^c)
+ \widetilde{L}_s^\kappa(E\cap\Omega^c,E^c\cap\Omega),
\end{equation*}
where for disjoint measurable sets $A,B\subset\R^n$,
$$\widetilde{L}_s^\kappa(A,B):=\int_A\int_B\frac{1}{|x-y|^{2\chi+n+2s}}\, \nu_\kappa(\d y) \nu_\kappa(\d x).$$
It is easy to observe that the nonlocal perimeters $\widetilde{{\rm Per}}{_s^\kappa}$ and ${\rm Per}_s^\kappa$ are not directly comparable, even when $\kappa \equiv 0$.

The result on the asymptotic behavior of $\widetilde{{\rm Per}}{_s^\kappa}(E,\Omega)$ is stated in the next proposition. Unlike the above results, our analysis does not require $\Omega$ to be bounded, connected, or have regular boundary.
\begin{proposition}\label{w-frac-perimeter}
Let $\Omega\subset\R^n$ be an open set. For every measurable subset $E$ of $\R^n$ with $\widetilde{{\rm Per}}{_{s_0}^\kappa}(E,\Omega)<\infty$ for some $s_0\in(0,1/2)$,
$$\lim_{s\rightarrow 0^+}s\widetilde{{\rm Per}}{^\kappa_s}(E,\Omega)=0.$$
\end{proposition}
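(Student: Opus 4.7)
The plan is to exploit two features of the weighted setup that are absent from the unweighted perimeter ${\rm Per}^\kappa_s(E,\Omega)$: first, the reference measure $\nu_\kappa$ is a probability measure on $\R^n$; second, for fixed $x,y$ the exponent $2\chi+n+2s$ in the kernel is monotone in $s$. Both features make the argument substantially simpler than the delicate heat-kernel analysis behind Theorems \ref{MS-dunkl} and \ref{lim-rel-s-per}; in particular no cancellation and no regularity assumption on $\partial\Omega$ will be required.

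Concretely, for each of the three bilinear terms $\widetilde L_s^\kappa(A,B)$ appearing in $\widetilde{\rm Per}_s^\kappa(E,\Omega)$, I would split the domain of integration according to $\{|x-y|\le 1\}$ and $\{|x-y|> 1\}$. On the near-diagonal piece, for every $s\in(0,s_0]$ one has $|x-y|^{-(2\chi+n+2s)}\le |x-y|^{-(2\chi+n+2s_0)}$, whence
$$\int_{A}\int_{B\cap\{|x-y|\le 1\}}\frac{\nu_\kappa(\d y)\,\nu_\kappa(\d x)}{|x-y|^{2\chi+n+2s}}\le \widetilde L_{s_0}^\kappa(A,B)\le \widetilde{\rm Per}{_{s_0}^\kappa}(E,\Omega)<\infty,$$
uniformly in $s$. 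On the far piece, the pointwise bound $|x-y|^{-(2\chi+n+2s)}\le 1$ together with $\nu_\kappa(\R^n)=1$ gives
$$\int_{A}\int_{B\cap\{|x-y|> 1\}}\frac{\nu_\kappa(\d y)\,\nu_\kappa(\d x)}{|x-y|^{2\chi+n+2s}}\le \nu_\kappa(A)\,\nu_\kappa(B)\le 1,$$
again uniformly in $s$.

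Summing the two estimates over the three bilinear terms yields $\widetilde{\rm Per}_s^\kappa(E,\Omega)\le 3\bigl[\widetilde{\rm Per}{_{s_0}^\kappa}(E,\Omega)+1\bigr]$ for all $s\in(0,s_0]$, and multiplying by $s$ and letting $s\to 0^+$ delivers the claim. The only point at which one must be mildly careful is that the monotonicity of $|x-y|^{-(2\chi+n+2s)}$ in $s$ goes opposite ways on $\{|x-y|\le 1\}$ and $\{|x-y|> 1\}$; splitting at the unit radius disentangles the two regimes and is what allows the hypothesis $\widetilde{\rm Per}{_{s_0}^\kappa}(E,\Omega)<\infty$ to control the singular part while the probability-measure property controls the tail. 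No other obstacle is anticipated.
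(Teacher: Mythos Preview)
Your proposal is correct and follows essentially the same approach as the paper: both split each bilinear term $\widetilde L_s^\kappa(A,B)$ at $|x-y|=1$, control the near-diagonal piece via monotonicity in $s$ by $\widetilde L_{s_0}^\kappa(A,B)$, and bound the far piece using $\nu_\kappa(A)\nu_\kappa(B)\le 1$, yielding a uniform bound so that multiplication by $s$ forces the limit to vanish.
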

\begin{proof}
Since $\widetilde{L}_{s_0}^\kappa(E\cap\Omega,E^c\cap\Omega)$ is finite by the assumption, we have
\begin{equation*}\label{w-frac-perimeter-1}\begin{split}
&\widetilde{L}_s^\kappa(E\cap\Omega,E^c\cap\Omega)\cr
&=\int_{((E\cap\Omega)\times (E^c\cap\Omega))\cap \{(x,y)\in\R^{2n}:\ |x-y|\geq1\}}\frac{1}{|x-y|^{2\chi+n+2s}}\,
\nu_\kappa\times\nu_\kappa(\d y,\d x)\cr
&\quad +\int_{((E\cap\Omega)\times (E^c\cap\Omega))\cap \{(x,y)\in\R^{2n}:\ |x-y|<1\}}\frac{1}{|x-y|^{2\chi+n+2s}}\, \nu_\kappa\times\nu_\kappa(\d y,\d x)\cr
&\leq \nu_\kappa(E\cap\Omega)\nu_\kappa(E^c\cap\Omega)+\widetilde{L}_{s_0}^\kappa(E\cap\Omega,E^c\cap\Omega)\cr
&<\infty,\quad s\in(0,s_0),
\end{split}\end{equation*}
where $\nu_\kappa\times\nu_\kappa$ stands for the product measure. This clearly leads to that
\begin{equation*}\label{w-frac-perimeter-2}\begin{split}
\lim_{s\rightarrow 0^+}s\widetilde{L}_s^\kappa(E\cap\Omega,E^c\cap\Omega)=0.
\end{split}\end{equation*}
Similarly, we also have
\begin{equation*}\label{w-frac-perimeter-2}\begin{split}
&\lim_{s\rightarrow 0^+}s\widetilde{L}_s^\kappa(E\cap\Omega,E^c\cap\Omega^c)=0,\\
&\lim_{s\rightarrow 0^+}s\widetilde{L}_s^\kappa(E\cap\Omega^c,E^c\cap\Omega)=0.
\end{split}\end{equation*}
Thus, we complete the proof.
\end{proof}

\subsection{Structure of the paper}\hskip\parindent
The paper is organized as follows. In Section \ref{sec-pre}, we recall fundamental concepts and establish necessary technical results that form the foundation for our subsequent analysis. Section \ref{sec-MS} contains the proof of Theorem \ref{MS-dunkl}, with the key innovation presented in Proposition \ref{big-time-2}. Section \ref{sec-perimeter} provides the proofs of Theorems \ref{lim-rel-s-per} and \ref{lim-rel-s-per-converse}, building on the establishment of some preparatory results. For completeness, Appendices \ref{app-A} and \ref{app-B} provide further properties of the Besov space and the nonlocal perimeter introduced in this work.

\subsection{Notation}\hskip\parindent
Throughout this work, we employ the following notation.
\begin{itemize}
\item For a set $A\subset\R^n$,
\begin{itemize}
\item[] $\1_A$  denotes its indicator function;
\item[] ${\rm diam}(A)=\sup\{|x-y|:\ x,y\in A\}$ denotes the diameter of $A$;
\item[] $A^c=\R^n\setminus A$ denotes the complement of $A$.
\end{itemize}

\item For two subsets $E,F\subset\R^n$, $E\triangle F$ denotes their symmetric difference.

\item For $k=1,2,\cdots$, $C^k(\R^n)$ denotes the space of functions on $\R^n$ with continuous derivatives up to order $k$.

\item We write $f\preceq g$ if there exists a constant $C > 0$ such that $f \leq C g$, and  $f\sim g$ if both $f\preceq g$ and $g\preceq f$ hold.

\item Positive constants are denoted by $c, C, c_1, c_2,\cdots$, and their values may vary between occurrences.
\end{itemize}

\section{Preparations}\label{sec-pre}\hskip\parindent
In this section, we begin by reviewing fundamental concepts in Dunkl theory, primarily following the expositions in \cite{ADH2019,Anker2017,Rosler2003}. Building on this foundation,  we develop several key technical tools that will be useful for later sections.

\subsection{Preliminaries on Dunkl theory}\label{sec-2.1}\hskip\parindent
Let $(\R^n, \langle\cdot,\cdot\rangle,|\cdot|)$ be the Euclidean space considered in Section \ref{sec-intro}. For each nonzero vector $\alpha\in\R^n$, we define the reflection operator  $r_\alpha: \R^n\rightarrow\R^n$ by
$$r_\alpha x =x-2\frac{\langle \alpha,x\rangle}{|\alpha|^2}\alpha,\quad x\in\R^n,$$
which represents the reflection across the hyperplane orthogonal to $\alpha$.

Let $\mathcal{R}$ be a root system in $\mathbb{R}^n$, defined as a finite, nonempty subset of $\mathbb{R}^n \setminus \{0\}$  such that  for every $\alpha\in\mathcal{R}$, $$\mathcal{R}\cap\alpha\R=\{\alpha,-\alpha\}\quad\mbox{and}\quad r_\alpha(\mathcal{R})=\mathcal{R},$$
where $\alpha\R:=\{\alpha a:\ a\in\R\}$. The reflection group (or Weyl group) $G$ generated by $\{r_\alpha:\ \alpha \in \mathcal{R}\}$ is a finite subgroup of the orthogonal group of $\mathbb{R}^n$. A positive subsystem $\mathcal{R}_+ $ is a subset of $\mathcal{R}$ such that for each root $\alpha \in \mathcal{R}$, exactly one of $\alpha$ or $-\alpha$ belongs to $\mathcal{R}_+$.

A multiplicity function $\kappa: \mathcal{R}\rightarrow[0,\infty)$ is a $G$-invariant function, meaning that $\kappa(g\alpha) = \kappa(\alpha)$ for all $g \in G$ and all $\alpha \in \mathcal{R}$. Equivalently, $\kappa$ is constant on each reflection group orbit in $\mathcal{R}$.  We mention that the $G$-invariance of $\kappa$ makes the Dunkl operators independent of the the particular choice of positive subsystem $\mathcal{R}_+ \subset \mathcal{R}$. Without loss of generality, we normalize the root system so that $|\alpha| =\sqrt{ 2}$ for all $\alpha \in \mathcal{R}$.

For $\xi \in \mathbb{R}^n$, the Dunkl operator $T_\kappa^\xi$ along $\xi$ associated with root system $\mathcal{R}$ and multiplicity function $\kappa$, initially introduced by  C.F. Dunkl  in the seminal paper \cite{Dunkl1989}, is defined by
$$T_\kappa^\xi f(x)=\langle \nabla f(x),\xi\rangle+\sum_{\alpha\in\mathcal{R}_+}\kappa(\alpha) \langle\alpha,\xi\rangle \frac{f(x)-f(r_\alpha x)}{\langle\alpha,x\rangle},\quad f\in
C^1(\R^n),\,x\in\R^n,$$
where $\nabla$ denotes the standard gradient operator. For typical examples of the Dunkl operator, we refer to \cite[Example 2.2]{Rosler2003}.  Crucially, the Dunkl operators commute, i.e., $T_\kappa^\xi\circ T_\kappa^\eta=T_\kappa^\eta\circ T_\kappa^\xi$ for all $\xi,\eta\in\R^n$.

Let $(e_l)_{l=1}^n$ be the standard orthonormal basis of $\mathbb{R}^n$. The Dunkl Laplacian, denoted $\Delta_\kappa$, is defined as $\Delta_\kappa=\sum_{l=1}^n (T_\kappa^{e_l})^2$. Explicitly, for any $f \in C^2(\mathbb{R}^n)$, the Dunkl Laplacian acts as
\begin{eqnarray*}\label{Delta}
	\Delta_\kappa f(x)=\Delta f(x)+2\sum_{\alpha\in\mathcal{R}_+}\kappa(\alpha)\bigg(\frac{\langle\alpha,\nabla f(x)\rangle}{\langle\alpha,x\rangle} - \frac{f(x)-f(r_\alpha
x)}{\langle\alpha,x\rangle^2}\bigg),\quad x\in\R^n.
\end{eqnarray*}
Clearly, due to the presence of the difference term, $\Delta_\kappa$ is a nonlocal operator.

Associated with $\Delta_\kappa$ is the weighted measure $\mu_\kappa = w_\kappa \mathscr{L}^n$, where $\mathscr{L}^n$ is the Lebesgue measure on $\mathbb{R}^n$ and the weight function $w_\kappa$ is given by
$$w_\kappa(x)=\prod_{\alpha\in\mathcal{R}_+}|\langle\alpha,x\rangle|^{2\kappa(\alpha)},\quad x\in\R^n.$$
This weight $w_\kappa$ is homogeneous of degree $2\chi$, where $\chi = \sum_{\alpha \in \mathcal{R}_+} \kappa(\alpha)$, and is $G$-invariant.  For $p\in[1,\infty]$, we denote by $\mathrm{L}^p(\mu_\kappa)$ the Lebesgue space of measurable functions on $\mathbb{R}^n$ with respect to the measure $\mu_\kappa$, equipped with the standard norm $\|\cdot\|_{\mathrm{L}^p(\mu_\kappa)}$.

Let $B(x,r)$ denote the open Euclidean ball centered at $x \in \mathbb{R}^n$ with radius $r > 0$. We write $V_\kappa(x,r) := \mu_\kappa(B(x,r))$, which satisfies
\begin{equation}\label{vol-asym}
V_\kappa(x,r)\sim r^n\prod_{\alpha\in\mathcal{R}_+}\big(|\langle\alpha,x\rangle|+r\big)^{2\kappa(\alpha)},\quad x\in\R^n,\,r>0.
\end{equation}
The measure $\mu_\kappa$ is doubling and satisfies the volume comparison property: there exists a constant $C\geq1$ such that
\begin{equation}\label{vol-comp}
C^{-1}\Big(\frac{R}{r}\Big)^n\leq\frac{V_\kappa(x,R)}{V_\kappa(x,r)}\leq C\Big(\frac{R}{r}\Big)^{n+2\chi},\quad x\in\R^n,\,0<r\leq R<\infty.
\end{equation}
While $\mu_\kappa$ is $G$-invariant, i.e., $\mu_\kappa(gA)=\mu_\kappa(A)$ for any $g\in G$ and any measurable set $A\subset\R^n$, it is neither Ahlfors regular nor translation invariant in general, where $gA:=\{gx:\ x\in A\}$.

We equip $\mathbb{R}^n$ with the $G$-invariant pseudo-metric:
$$d(x,y)=\min_{g \in G}|x-gy|,\quad x,y\in\R^n,$$
Clearly, it satisfies $d(x,y)\leq|x-y|$ for all $x,y\in\R^n$; however, the converse inequality is not true in general. Recall that, for every $x\in\R^n$ and every $r>0$,
$B_d(x,r)=\{y\in\R^n:\ d(x,y)<r\}$ is the associated pseudo-balls, which can be represented as $B_d(x,r)=\cup_{g\in G}gB(x,r)$ by the definition of the pseudo-metric $d$. Moreover,
$$V_\kappa(x,r)\leq\mu_\kappa(B_d(x,r))\leq |G|V_\kappa(x,r),\quad x\in\R^n,\,r>0,$$
where $|G|$ is the order of the reflection group $G$.

The Dunkl Laplacian $\Delta_\kappa$ is essentially self-adjoint in ${\rm L}^2(\mu_\kappa)$. Let $(P_t^\kappa)_{t\geq0}$ be the Dunkl heat semigroup generated by $\Delta_\kappa$,
i.e., for every bounded measurable function $f$ on $\R^n$, $P_0^\kappa f:=f$ and
$$P_t^\kappa f(x):=\int_{\R^n}f(y)p_t^\kappa(x,y)\,\mu_\kappa(\d y),\quad x\in\R^n,\, t>0,$$
where $(p_t^\kappa)_{t>0}$ is the Dunkl heat kernel. It is well known that $p_t^\kappa:(0,\infty)\times\R^n\times\R^n\rightarrow(0,\infty)$ is infinitely differentiable,
symmetric in $x$ and $y$, i.e.,
\begin{equation}\label{ker-symmetry}
p_t^\kappa(x,y)=p_t^\kappa(y,x),\quad x,y\in\R^n,\,t>0,
\end{equation}
 and satisfies the stochastic completeness (or conservativeness), i.e.,
\begin{equation}\label{sto-complete}
\int_{\R^n} p_t^\kappa(x,y)\,\mu_\kappa(\d y)=1,\quad x\in\R^n,\,t>0.
\end{equation}
Furthermore, the following Gaussian-type upper bound holds:
\begin{equation}\label{kernel-bound}
p_t^\kappa(x,y)\leq \frac{c_1}{\max\{V_\kappa(x,\sqrt{t}),V_\kappa(y,\sqrt{t})\}}\exp\Big(-c_2\frac{d(x,y)^2}{t}\Big),\quad t>0,\,x,y\in\R^n,
\end{equation}
for some constants $c_1,c_2>0$. It turns out that $(P_t^\kappa)_{t\geq0}$  can be extended to a strongly continuous contraction semigroup on on all ${\rm L}^p(\mu_\kappa)$ spaces ($1 \leq p < \infty$) and a contraction semigroup in ${\rm L}^\infty(\mu_\kappa)$; moreover, $(P_t^\kappa)_{t\geq0}$ is sub-Markovian, i.e., $0\leq P_t^\kappa f\leq1$ for any $t\geq0$ and any measurable function on $\R^n$ with $0\leq f\leq1$. For simplicity, we still use the same notation. Refer to \cite{ADH2019,Rosler2003,Rosler1998} for complete details and further results on the Dunkl heat semigroup/kernel.

We remark that the stochastic process associated with $(P_t^\kappa)_{t \geq 0}$ is generally a Markov jump process, but unlike L\'{e}vy processes, its increments are not necessarily stationary or independent (cf. \cite{LZ2023,GalYor2005}).  When $\kappa \equiv 0$, the Dunkl Laplacian reduces to the standard Laplacian: $\Delta_0 = \Delta$. Consequently, the semigroup and heat kernel simplify to the classical heat semigroup/kernel, respectively: $P_t^0=P_t$ and $p_t^0=p_t$ for all $t>0$.

\subsection{Useful tools}\hskip\parindent
The following regularity estimate for the Dunkl heat kernel plays a crucial role in our analysis.
While motivated by \cite[Theorem 4.1(b)]{ADH2019}, our result differs in the key aspect: it holds for all points $x,y\in\R^n$ with an additional term $ e^{|x-y|^2/(c_2t)}$.  Note that both the Euclidean metric and the pseudo-metric appear in the right hand side of \eqref{reg-kernel}.
\begin{lemma}\label{ker-lip}
There exist constants $c_1,c_2,c_3>0$ such that
\begin{equation}\label{reg-kernel}
|p_t^\kappa(x,z)-p_t^\kappa(y,z)|\leq c_1|x-y| \Big(1+\frac{|x-y|}{\sqrt{2t}}\Big)^{n+2\chi} e^{\frac{|x-y|^2}{c_2t}}
\frac{1}{\sqrt{t}V_\kappa(x,\sqrt{2t})}e^{-c_3\frac{d(x,z)^2}{ t}},
\end{equation}
for every $t>0$ and every $x,y,z\in\R^n$.
\end{lemma}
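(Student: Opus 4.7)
\noindent\textbf{Proof proposal for Lemma \ref{ker-lip}.}
The plan is to prove \eqref{reg-kernel} by applying the fundamental theorem of calculus to the map $\tau\mapsto p_t^\kappa(y_\tau,z)$ along the straight segment $y_\tau:=y+\tau(x-y)$, $\tau\in[0,1]$, and then feeding in the pointwise gradient estimate of the form
$$|\nabla_1 p_t^\kappa(w,z)|\leq \frac{C}{\sqrt t\,V_\kappa(w,\sqrt t)}\,e^{-c\,d(w,z)^2/t},\qquad w,z\in\R^n,\ t>0,$$
which is essentially the content of \cite[Theorem 4.1(b)]{ADH2019}. Starting from
$$p_t^\kappa(x,z)-p_t^\kappa(y,z)=\int_0^1\langle\nabla_1 p_t^\kappa(y_\tau,z),x-y\rangle\,\d\tau,$$
the whole task reduces to replacing $1/V_\kappa(y_\tau,\sqrt t)$ by $1/V_\kappa(x,\sqrt{2t})$ and $e^{-c\,d(y_\tau,z)^2/t}$ by $e^{-c_3\,d(x,z)^2/t}$, uniformly in $\tau\in[0,1]$, paying the two correction factors that appear in \eqref{reg-kernel}.

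For the volume factor I would use $|x-y_\tau|=(1-\tau)|x-y|\leq|x-y|$, so $B(x,\sqrt{2t})\subset B(y_\tau,\sqrt{2t}+|x-y|)$, and then invoke the doubling property \eqref{vol-comp} to obtain
$$V_\kappa(x,\sqrt{2t})\leq V_\kappa\bigl(y_\tau,\sqrt{2t}+|x-y|\bigr)\leq C\Bigl(1+\tfrac{|x-y|}{\sqrt{2t}}\Bigr)^{n+2\chi}V_\kappa(y_\tau,\sqrt t),$$
which is exactly the polynomial prefactor $(1+|x-y|/\sqrt{2t})^{n+2\chi}/V_\kappa(x,\sqrt{2t})$ in \eqref{reg-kernel}.

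For the exponential factor I would use that $d$ is a pseudo-metric together with $d(x,y_\tau)\leq|x-y_\tau|\leq|x-y|$, which gives $d(y_\tau,z)\geq d(x,z)-|x-y|$ and hence $d(y_\tau,z)^2\geq d(x,z)^2-2|x-y|\,d(x,z)$. Young's inequality $2|x-y|\,d(x,z)\leq \tfrac12 d(x,z)^2+2|x-y|^2$ then yields $d(y_\tau,z)^2\geq \tfrac12 d(x,z)^2-2|x-y|^2$, so
$$e^{-c\,d(y_\tau,z)^2/t}\leq e^{2c|x-y|^2/t}\,e^{-(c/2)\,d(x,z)^2/t},\qquad \tau\in[0,1],$$
which provides the second correction $e^{|x-y|^2/(c_2t)}$ with $c_2=1/(2c)$ and $c_3=c/2$. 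Combining the three pieces and integrating the uniform bound over $\tau\in[0,1]$ delivers \eqref{reg-kernel}.

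The main obstacle I expect is really the invocation of the pointwise gradient bound, because the Dunkl heat kernel is not translation invariant and its derivative estimates are delicate; this is the content we borrow from \cite{ADH2019}. Once it is granted, the two comparison steps are routine and are precisely what forces the appearance of the extra factors $(1+|x-y|/\sqrt{2t})^{n+2\chi}$ and $e^{|x-y|^2/(c_2t)}$, which make \eqref{reg-kernel} valid for \emph{all} $x,y,z\in\R^n$ rather than only in a restricted regime such as $|x-y|\leq\sqrt t$.
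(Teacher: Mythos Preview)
Your proposal is correct and follows essentially the same route as the paper: the paper also integrates along the segment $z_s=y+s(x-y)$ and starts from the bound $|p_t^\kappa(x,z)-p_t^\kappa(y,z)|\le c_1\frac{|x-y|}{\sqrt t}\int_0^1 V_\kappa(z_s,\sqrt{2t})^{-1}e^{-c_2 d(z,z_s)^2/t}\,\d s$ taken from the proof on page 2374 of \cite{ADH2019}, then performs the same volume comparison via \eqref{vol-comp} and the same exponential comparison via $|d(x,z)-d(z_s,z)|\le|x-y|$ together with Young/Cauchy--Schwarz. The only cosmetic differences are that the paper quotes the integrated inequality directly (rather than phrasing it as FTC plus gradient bound) and obtains $d(z_s,z)^2\ge\tfrac12 d(x,z)^2-3|x-y|^2$ instead of your $-2|x-y|^2$, which only affects the value of $c_2$.
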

\begin{proof}
By the proof on page 2374 of \cite{ADH2019}, we can find some positive constants $c_1,c_2$ such that for any $t>0$ and any $x,y,z\in\R^n$,
\begin{equation}\begin{split}\label{pf-ker-lip-1}
|p_t^\kappa(x,z)-p_t^\kappa(y,z)|\leq c_1\frac{|x-y|}{\sqrt{t}}\int_0^1 \frac{1}{V_\kappa(z_s,\sqrt{2t})} e^{-c_2\frac{d(z,z_s)^2}{t}}\,\d s,
\end{split}\end{equation}
where $z_s:=y+s(x-y)$.

Observe that for $s\in[0,1]$ and $x,y,z\in\R^n$,
\begin{equation*}\begin{split}
|d(z,x)-d(z,z_s)|\leq|x-z_s|=(1-s)|x-y|\leq |x-y|.
\end{split}\end{equation*}
Consequently,
\begin{equation*}\begin{split}\label{pf-ker-lip-2}
|d(x,z)^2-d(z_s,z)^2|&=
|d(x,z)-d(z_s,z)|[d(x,z)+d(z,z_s)]\cr
&\leq|x-y|[2d(x,z)+|x-y|]\cr
&\leq \frac{1}{2}d(x,z)^2+3|x-y|^2,\quad s\in[0,1],\,x,y,z\in\R^n,
\end{split}\end{equation*}
where we applied the Cauchy--Schwarz inequality in the last line. This implies the lower bound
\begin{equation}\begin{split}\label{pf-ker-lip-3}
d(z_s,z)^2\geq \frac{1}{2}d(x,z)^2-3|x-y|^2,\quad s\in[0,1],\,x,y,z\in\R^n,
\end{split}\end{equation}
Furthermore, by the volume comparison property \eqref{vol-comp}, we have
\begin{equation}\begin{split}\label{pf-ker-lip-4}
\frac{V_\kappa(x,\sqrt{2t})}{V_\kappa(z_s,\sqrt{2t})}&\leq\frac{V_\kappa(z_s,|x-z_s|+\sqrt{2t})}{V_\kappa(z_s,\sqrt{2t})}
\leq\frac{V_\kappa(z_s,|x-y|+\sqrt{2t})}{V_\kappa(z_s,\sqrt{2t})}\cr
&\preceq\Big(1+\frac{|x-y|}{\sqrt{2t}}\Big)^{n+2\chi},\quad s\in[0,1],\,t>0,\,x,y\in\R^n.
\end{split}\end{equation}

Therefore, combining
\eqref{pf-ker-lip-3}, \eqref{pf-ker-lip-4} with \eqref{pf-ker-lip-1}, we immediately obtain  the desired inequality \eqref{reg-kernel}.
\end{proof}

The next result is on the ultra-contractivity of the Dunkl heat semigroup $(P_t^\kappa)_{t\geq0}$. Refer to \cite[Proposition 5.5]{Vel2020} for the particular $q=\infty$ case
of \eqref{ultra-2}.
\begin{lemma}\label{ultra-contra}
(i) Let $p\in[1,\infty]$. There exists a constant $c>0$ such that
\begin{equation}\label{ultra-1}
|P^\kappa_t f(x)|\leq c t^{-\frac{\chi+n/2}{p}}\|f\|_{{\rm L}^p(\mu_\kappa)},\quad x\in\R^n,\,t>0,\,f\in {\rm L}^p(\mu_\kappa).
\end{equation}
(ii) Let $1\leq p\leq q\leq\infty$. There exists a constant $C>0$ such that
\begin{equation}\label{ultra-2}
\|P^\kappa_t f\|_{{\rm L}^q(\mu_\kappa)}\leq  C t^{-(\frac{1}{p}-\frac{1}{q})(\chi+n/2)}\|f\|_{{\rm L}^p(\mu_\kappa)},\quad t>0,\,f\in {\rm L}^p(\mu_\kappa).
\end{equation}
\end{lemma}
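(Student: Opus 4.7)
The plan is to bootstrap from the Gaussian-type kernel bound \eqref{kernel-bound} to an $L^p\to L^\infty$ estimate, and then to combine that with the $L^p$-contractivity of $(P_t^\kappa)_{t\geq0}$ via interpolation. The one geometric input I need, beyond routine manipulations of the kernel, is a crude but \emph{uniform} lower bound on $V_\kappa(y,\sqrt{t})$.

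From the volume asymptotics \eqref{vol-asym} and the trivial inequality $|\langle\alpha,y\rangle|+\sqrt{t}\geq\sqrt{t}$, I would first extract $V_\kappa(y,\sqrt{t})\geq c_0\,t^{n/2+\chi}$ for every $y\in\R^n$ and every $t>0$. Substituting this uniform bound into \eqref{kernel-bound} yields the on-diagonal estimate
\[
p_t^\kappa(x,y)\leq c\,t^{-(n/2+\chi)}\exp\Big(-c_2\frac{d(x,y)^2}{t}\Big),\quad t>0,\,x,y\in\R^n.
\]
For \eqref{ultra-1} with $p\in[1,\infty)$, H\"older's inequality gives $|P_t^\kappa f(x)|\leq \|p_t^\kappa(x,\cdot)\|_{{\rm L}^{p'}(\mu_\kappa)}\|f\|_{{\rm L}^p(\mu_\kappa)}$ with $p'$ the conjugate exponent of $p$. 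The key pointwise inequality
\[
p_t^\kappa(x,y)^{p'}\leq \bigl(c\,t^{-(n/2+\chi)}\bigr)^{p'-1}p_t^\kappa(x,y),
\]
together with stochastic completeness \eqref{sto-complete}, then leads to $\|p_t^\kappa(x,\cdot)\|_{{\rm L}^{p'}(\mu_\kappa)}\leq c^{1/p}t^{-(n/2+\chi)/p}$, which is exactly \eqref{ultra-1}. The endpoint $p=\infty$ follows immediately from \eqref{sto-complete}.

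For \eqref{ultra-2}, the $L^p$-contractivity of $P_t^\kappa$ gives $\|P_t^\kappa\|_{{\rm L}^p\to{\rm L}^p}\leq 1$, and part (i) reads $\|P_t^\kappa\|_{{\rm L}^p\to{\rm L}^\infty}\leq c\,t^{-(n/2+\chi)/p}$. Applying the Riesz--Thorin interpolation theorem with $\theta=1-p/q$, so that $1/q=(1-\theta)/p$, one obtains
\[
\|P_t^\kappa\|_{{\rm L}^p\to{\rm L}^q}\leq \|P_t^\kappa\|_{{\rm L}^p\to{\rm L}^p}^{1-\theta}\|P_t^\kappa\|_{{\rm L}^p\to{\rm L}^\infty}^{\theta}\leq c^\theta\,t^{-(1/p-1/q)(n/2+\chi)},
\]
which is \eqref{ultra-2}. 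The only step requiring some care is the very first one: I must use the \emph{uniform} lower bound on $V_\kappa(y,\sqrt{t})$ rather than the sharper but $y$-dependent doubling estimate \eqref{vol-comp}, since only the former yields an $L^1\to L^\infty$ norm of the kernel controlled by a function of $t$ alone; the rest of the argument is standard once that reduction is in place.
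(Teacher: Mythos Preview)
Your proposal is correct and follows essentially the same overall architecture as the paper (kernel bound $\Rightarrow$ $L^p\to L^\infty$, then Riesz--Thorin), but the execution differs in two places worth noting.

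For part~(i) with $p\in(1,\infty)$, the paper estimates $\|p_t^\kappa(x,\cdot)\|_{{\rm L}^{p'}(\mu_\kappa)}$ by a dyadic annular decomposition of $\R^n$ into shells $B_d(x,2^{j+1}\sqrt{t})\setminus B_d(x,2^j\sqrt{t})$, using the Gaussian factor in \eqref{kernel-bound} together with the doubling estimate \eqref{vol-comp} to sum the series. Your route is shorter: once you extract the uniform on-diagonal bound $p_t^\kappa\leq c\,t^{-(n/2+\chi)}$ from \eqref{vol-asym}, the inequality $p_t^\kappa(x,y)^{p'}\leq(c\,t^{-(n/2+\chi)})^{p'-1}p_t^\kappa(x,y)$ together with stochastic completeness \eqref{sto-complete} integrates out the kernel in one line, with no need for the dyadic sum or \eqref{vol-comp}. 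This is a genuine simplification; the paper's decomposition would be needed if one wanted the sharper, $x$-dependent bound involving $V_\kappa(x,\sqrt{t})$, but for the stated inequality your crude uniform lower bound on the volume is exactly the right tool.

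For part~(ii), the paper interpolates between the $L^1\to L^\infty$ bound and the full family of $L^r\to L^r$ contractions, whereas you fix $p$ and interpolate between $L^p\to L^p$ and $L^p\to L^\infty$. Both are valid applications of Riesz--Thorin and yield the same exponent; your choice is perhaps the more transparent one since the interpolation parameter $\theta=1-p/q$ appears directly.
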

\begin{proof}
(1) Let $f\in {\rm L}^1(\mu_\kappa)$. Then, by \eqref{kernel-bound} and \eqref{vol-asym}, we have
\begin{equation}\begin{split}\label{pf-ultra-L1}
&|P_t^\kappa f(x)|=\bigg|\int_{\R^n}f(y)p_t^\kappa(x,y)\,\mu_\kappa(\d y)\bigg|\cr
&\preceq \int_{\R^n}\frac{|f(y)|}{t^{\chi+n/2}}\,\mu_\kappa(\d y)= t^{-(\chi+n/2)}\|f\|_{{\rm L}^1(\mu_\kappa)},\quad t>0,\,x\in\R^n.
\end{split}\end{equation}

Let $f\in {\rm L}^\infty(\mu_\kappa)$. Then, by the stochastic completeness \eqref{sto-complete}, we have
\begin{equation}\label{pf-ultra-L0}
|P_t^\kappa f(x)|=\bigg|\int_{\R^n}f(y)p_t^\kappa(x,y)\,\mu_\kappa(\d y)\bigg|\leq\|f\|_{{\rm L}^\infty(\mu_\kappa)}, \quad t>0,\,x\in\R^n.
\end{equation}

Let $p\in(1,\infty)$ and $f\in {\rm L}^p(\mu_\kappa)$. Set $q=p/(p-1)$. By H\"{o}lder's inequality and the upper bound \eqref{kernel-bound}, we have
\begin{equation}\begin{split}\label{pf-ultra-Lp}
|P_t^\kappa f(x)|&=\bigg|\int_{\R^n}f(y)p_t^\kappa(x,y)\,\mu_\kappa(\d y)\bigg|\\
&\leq \|f\|_{{\rm L}^p(\mu_\kappa)} \|p_t^\kappa(x,\cdot)\|_{{\rm L}^q(\mu_\kappa)}\cr
&\preceq\|f\|_{{\rm L}^p(\mu_\kappa)}\bigg(\int_{\R^n}\frac{e^{-c d(x,y)^2/t}}{ V_\kappa(x,\sqrt{t})^q}\,\mu_\kappa(\d y)\bigg)^{1/q},
\quad t>0,\,x\in\R^n,
\end{split}\end{equation}
where $c$ is some positive constant. By applying  \eqref{vol-comp} and \eqref{vol-asym}, we derive that
\begin{equation}\begin{split}\label{pf-ultra-Lp+1}
&\int_{\R^n}\frac{e^{-c d(x,y)^2/t}}{ V_\kappa(x,\sqrt{t})^q}\,\mu_\kappa(\d y)\cr
&=\Big(\int_{B_d(x,\sqrt{t})}+\sum_{j=0}^\infty\int_{B_d(x,2^{j+1}\sqrt{t})\setminus B_d(x,2^j\sqrt{t})}\Big)\frac{e^{-c d(x,y)^2/t}}{
V_\kappa(x,\sqrt{t})^q}\,\mu_\kappa(\d y)\cr
&\leq\frac{\mu_{\kappa}(B_d(x,\sqrt{t}))}{V_\kappa(x,\sqrt{t})^q} + \sum_{j=0}^\infty e^{-c 4^{j}}\frac{\mu_{\kappa}(B_d(x,2^{j+1}\sqrt{t}))}{V_\kappa(x,\sqrt{t})^q}\cr
&\leq\frac{|G|}{V_\kappa(x,\sqrt{t})^{q-1}}\bigg[1+\sum_{j=0}^\infty e^{-c 4^{j}}2^{(j+1)(n+2\chi)}\bigg]\cr
&\preceq t^{-(n/2+\chi)(q-1)},\quad t>0,\,x\in\R^n,
\end{split}\end{equation}
for some constant $c>0$. Hence, \eqref{pf-ultra-Lp} and \eqref{pf-ultra-Lp+1} lead to
\begin{equation}\label{pf-ultra-Lp+2}
|P_t^\kappa f(x)|\preceq t^{-\frac{\chi+n/2}{p}}\|f\|_{{\rm L}^p(\mu_\kappa)},\quad x\in\R^n,\,t>0,\, p\in(1,\infty).
\end{equation}

Thus, putting \eqref{pf-ultra-L1}, \eqref{pf-ultra-L0} and \eqref{pf-ultra-Lp+2} together, we immediately conclude the desired inequality \eqref{ultra-1} for every
$p\in[1,\infty]$.

(2) By the contraction property of $(P_t^\kappa)_{t\geq0}$, we have
$$\| P_t^\kappa f\|_{{\rm L}^p(\mu_\kappa)}\leq\| f\|_{{\rm L}^p(\mu_\kappa)},\quad t\geq0,\,p\in[1,\infty],\,f\in {\rm L}^p(\mu_\kappa).$$
Thus, combining this with \eqref{pf-ultra-L1} (or \eqref{ultra-1} with $p=1$), by the Riesz--Thorin interpolation theorem (see e.g. \cite[Theorem 1.1.5]{Davies1989}), we
complete the proof of  \eqref{ultra-2}.
\end{proof}

Finally, we borrow a key result from \cite[Lemma 2.3]{Li2019},  whose proof follows standard techniques by applying the second inequality in \eqref{vol-comp}.
\begin{lemma}\label{int-ker-bd}
For every $\epsilon>0$, there exists a positive constant $C$ (depending on $\epsilon$ and $|G|$) such that
$$\int_{ B_d(x,r)^c}\exp\Big(-2\epsilon\frac{d(x,y)^2}{t}\Big)\, \mu_\kappa(\d y)\leq CV_\kappa(x,\sqrt{t})e^{-\epsilon r^2/t},$$
for every $r,t>0$ and every $x\in\R^n$.
\end{lemma}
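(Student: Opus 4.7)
The plan is first to extract the overall decay factor $e^{-\epsilon r^2/t}$ from the pointwise lower bound $d(x,y)\geq r$ on $B_d(x,r)^c$, and then to bound the remaining integral over the whole of $\R^n$ by $V_\kappa(x,\sqrt{t})$ through a dyadic-shell decomposition together with the second inequality in \eqref{vol-comp}. Concretely, on $B_d(x,r)^c$ I would write
\begin{equation*}
\exp\Bigl(-2\epsilon\frac{d(x,y)^2}{t}\Bigr)\leq e^{-\epsilon r^2/t}\exp\Bigl(-\epsilon\frac{d(x,y)^2}{t}\Bigr),
\end{equation*}
and then enlarge the integration domain to all of $\R^n$. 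This reduces the task to showing that
\begin{equation*}
\int_{\R^n}\exp\Bigl(-\epsilon\frac{d(x,y)^2}{t}\Bigr)\,\mu_\kappa(\d y)\preceq V_\kappa(x,\sqrt{t}),
\end{equation*}
uniformly in $x\in\R^n$ and $t>0$, with the implicit constant depending only on $\epsilon$, $|G|$, $n$ and $\chi$.

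For this residual estimate, I would repeat verbatim the shell decomposition already used inside \eqref{pf-ultra-Lp+1}: split $\R^n$ into $B_d(x,\sqrt{t})$ and the annuli $B_d(x,2^{j+1}\sqrt{t})\setminus B_d(x,2^j\sqrt{t})$ for $j\geq 0$. On the $j$-th shell the Gaussian is bounded by $e^{-\epsilon 4^j}$, while $\mu_\kappa(B_d(x,2^{j+1}\sqrt{t}))\leq |G|V_\kappa(x,2^{j+1}\sqrt{t})$ follows from the representation $B_d(x,\cdot)=\bigcup_{g\in G}gB(x,\cdot)$ together with the $G$-invariance of $\mu_\kappa$, and the second inequality in \eqref{vol-comp} controls $V_\kappa(x,2^{j+1}\sqrt{t})$ by $2^{(j+1)(n+2\chi)}V_\kappa(x,\sqrt{t})$ up to a universal constant. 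Summing over $j$ then gives a bound of the form $|G|V_\kappa(x,\sqrt{t})\bigl[1+C'\sum_{j\geq 0}2^{(j+1)(n+2\chi)}e^{-\epsilon 4^j}\bigr]$, and the series converges because Gaussian decay dominates polynomial growth.

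No serious obstacle appears; this is essentially the same Gaussian-tail computation already carried out inside the proof of Lemma \ref{ultra-contra}. The only bookkeeping to be careful with is the factor $|G|$ that enters when one passes from the pseudo-ball $B_d$ to the Euclidean volume $V_\kappa$, and the $\epsilon$-dependence of the final constant through the convergent tail series.
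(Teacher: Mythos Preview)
Your proposal is correct and matches exactly the approach the paper indicates: the paper does not spell out a proof but states that the lemma ``can be proved in a standard way by applying the second inequality of \eqref{vol-comp},'' which is precisely your extraction of $e^{-\epsilon r^2/t}$ followed by the dyadic-shell computation from \eqref{pf-ultra-Lp+1}.
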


\section{Dimension-free MS-type formula}\label{sec-MS}\hskip\parindent
In this section, we aim to prove Theorem \ref{MS-dunkl}. To this end, we split Theorem \ref{MS-dunkl} into two propositions in full generality.

The first one is on the short-time integration.
\begin{proposition}\label{small-time}
Let $p\in[1,\infty)$. Then, for every $f\in\cup_{s\in(0,\infty)}{\rm B}_{s,p}^\kappa(\R^n)$,
$$\lim_{s\rightarrow0^+}s\int_0^1 t^{-(1+\frac{ps}{2})}\int_{\R^n}P_t^\kappa(|f-f(x)|^p)(x)\,\mu_\kappa(\d x)\d t=0.$$
\end{proposition}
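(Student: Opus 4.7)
The proof will rest on a simple monotonicity observation once we unpack the definitions. Let me set up some notation: for $t>0$ write
$$F(t):=\int_{\R^n}P_t^\kappa(|f-f(x)|^p)(x)\,\mu_\kappa(\d x)\ge 0,$$
so that the quantity whose limit we want to understand is $s\int_0^1 t^{-(1+ps/2)}F(t)\,\d t$.

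By hypothesis there exists some $s_0>0$ with $f\in {\rm B}_{s_0,p}^\kappa(\R^n)$, which by definition means ${\rm N}_{s_0,p}^\kappa(f)<\infty$, equivalently
$$C:=\int_0^1 t^{-(1+ps_0/2)}F(t)\,\d t\;\le\; \int_0^\infty t^{-(1+ps_0/2)}F(t)\,\d t \;<\;\infty.$$
This is the only information about $f$ we will need; the key point is that the weight $t^{-(1+ps_0/2)}$ already produces an integrable majorant near $t=0$.

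The first step is a pointwise comparison of weights on the short-time interval. For any $s\in(0,s_0)$ and any $t\in(0,1)$, the exponent satisfies $-(1+ps/2)>-(1+ps_0/2)$, and since $t\in(0,1)$, the map $a\mapsto t^a$ is decreasing; hence
$$t^{-(1+ps/2)}\;\le\; t^{-(1+ps_0/2)},\qquad t\in(0,1),\ s\in(0,s_0).$$
Combined with $F\ge 0$, this gives the uniform-in-$s$ estimate
$$\int_0^1 t^{-(1+ps/2)}F(t)\,\d t\;\le\; \int_0^1 t^{-(1+ps_0/2)}F(t)\,\d t\;=\;C,\qquad s\in(0,s_0).$$

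The second step is simply to multiply by $s$ and let $s\to 0^+$: we obtain
$$0\;\le\; s\int_0^1 t^{-(1+ps/2)}F(t)\,\d t\;\le\; sC\;\longrightarrow\;0,$$
which yields the stated limit. There is no genuine obstacle here; the entire argument rides on the monotonicity of $t\mapsto t^a$ on $(0,1)$ and on the finiteness of the Besov seminorm at one parameter $s_0$, so no properties of the Dunkl heat semigroup beyond nonnegativity of $F$ are required. The contrast with the long-time tail, where the weight $t^{-(1+ps/2)}$ actually grows as $s\downarrow 0$ and a genuine argument involving $L^p$-approximation is needed, is what motivates splitting Theorem \ref{MS-dunkl} into the two propositions in the first place.
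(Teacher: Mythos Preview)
Your proof is correct and follows essentially the same approach as the paper: pick $s_0>0$ with $f\in{\rm B}_{s_0,p}^\kappa(\R^n)$, use the monotonicity $t^{-(1+ps/2)}\le t^{-(1+ps_0/2)}$ on $(0,1)$ for $s\le s_0$ to bound the short-time integral by the finite constant ${\rm N}_{s_0,p}^\kappa(f)^p$, then multiply by $s$ and let $s\to0^+$. The only differences are notational (the paper writes $\tau$ for your $s_0$) and in the level of detail.
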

\begin{proof}
Let 
$\tau\in(0,\infty)$. Then for every $s\in(0,\tau]$ and every $f\in{\rm B}_{\tau,p}^\kappa(\R^n)$,
\begin{equation*}\begin{split}
&\int_0^1t^{-(1+\frac{ps}{2})}\int_{\R^n}P_t^\kappa(|f-f(x)|^p)(x)\,\mu_\kappa(\d x)\d t\\
&\leq \int_0^1 t^{-(1+\frac{p\tau}{2})}\int_{\R^n}P_t^\kappa(|f-f(x)|^p)(x)\,\mu_\kappa(\d x)\d t\\
&\leq {\rm N}^\kappa_{\tau,p}(f)^p<\infty.
\end{split}\end{equation*}
Thus, we finish the proof by multiplying by $s$ and taking the limit as $s\rightarrow0^+$.
\end{proof}

The second one is on the long-time integration, which is the crucial part. We emphasize that the following limit formula holds in ${\rm L}^p(\mu_\kappa)$ for all $1\leq
p<\infty$.
\begin{proposition}\label{big-time-2}
Let $p\in[1,\infty)$. Then, for every $f\in {\rm L}^p(\mu_\kappa)$,
$$\lim_{s\rightarrow0^+}s\int_1^\infty t^{-(1+\frac{ps}{2})}\int_{\R^n}P_t^\kappa(|f-f(x)|^p)(x)\,\mu_\kappa(\d x)\d t=\frac{4}{p}\|f\|_{{\rm L}^p(\mu_\kappa)}^p.$$
\end{proposition}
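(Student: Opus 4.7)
The strategy is a two-step approximation. Set
$$A_s(g):=s\int_1^\infty t^{-(1+ps/2)}\int_{\R^n}P_t^\kappa(|g-g(x)|^p)(x)\,\mu_\kappa(\d x)\,\d t.$$
The first step is to reduce the claim to simple functions of finite-measure support via a uniform sublinearity estimate. Viewing $(t,x,y)\mapsto g(y)-g(x)$ as an element of $L^p$ on $(1,\infty)\times\R^n\times\R^n$ with the measure $s\,t^{-(1+ps/2)}\d t\otimes p_t^\kappa(x,y)\mu_\kappa(\d y)\otimes\mu_\kappa(\d x)$, Minkowski's inequality gives $A_s(f+g)^{1/p}\le A_s(f)^{1/p}+A_s(g)^{1/p}$. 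Combining $|h(y)-h(x)|^p\le 2^{p-1}(|h(y)|^p+|h(x)|^p)$ with the symmetry \eqref{ker-symmetry} and stochastic completeness \eqref{sto-complete} yields $\int_{\R^n}P_t^\kappa(|h-h(x)|^p)(x)\,\mu_\kappa(\d x)\le 2^p\|h\|_{{\rm L}^p(\mu_\kappa)}^p$, and since $s\int_1^\infty t^{-(1+ps/2)}\d t=2/p$, one obtains $A_s(h)\le (2^{p+1}/p)\|h\|_{{\rm L}^p(\mu_\kappa)}^p$ uniformly in $s>0$. Consequently $|A_s(f)^{1/p}-A_s(f_k)^{1/p}|\preceq\|f-f_k\|_{{\rm L}^p(\mu_\kappa)}$ uniformly in $s$, so by $\sigma$-finiteness of $\mu_\kappa$ it is enough to prove the limit for simple $f$ with finite-measure support.

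The second step is to carry out the limit explicitly for $f=\sum_{i=1}^N a_i\mathbbm{1}_{A_i}$ with disjoint $A_i$ of finite $\mu_\kappa$-measure. Setting $A_0:=(\cup_{i=1}^N A_i)^c$ and $a_0:=0$ so that $\{A_i\}_{i=0}^N$ partitions $\R^n$, a direct expansion gives
$$\int_{\R^n}P_t^\kappa(|f-f(x)|^p)(x)\,\mu_\kappa(\d x)=\sum_{i,j=0}^N|a_i-a_j|^p B_{ij}(t),\quad B_{ij}(t):=\int_{A_i}P_t^\kappa\mathbbm{1}_{A_j}(x)\,\mu_\kappa(\d x).$$
The symmetry \eqref{ker-symmetry} forces $B_{ij}=B_{ji}$ and the stochastic completeness \eqref{sto-complete} gives $\sum_{j=0}^N B_{ij}(t)=\mu_\kappa(A_i)$. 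Using the latter identity to eliminate the boundary entries $j=0$ and (by symmetry) $i=0$, and discarding the vanishing diagonal $i=j$, the expression rearranges as $2\|f\|_{{\rm L}^p(\mu_\kappa)}^p-R(t)$ with
$$R(t)=\sum_{i,j=1}^N\bigl(|a_i|^p+|a_j|^p-|a_i-a_j|^p\bigr)B_{ij}(t).$$
The $L^1\to L^\infty$ ultracontractivity of Lemma \ref{ultra-contra}(i) gives $B_{ij}(t)\preceq t^{-(n/2+\chi)}\mu_\kappa(A_i)\mu_\kappa(A_j)$ for $i,j\ge 1$ and $t\ge 1$, whence $|R(t)|\le C_f\, t^{-(n/2+\chi)}$ on $[1,\infty)$. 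Integrating against $s\,t^{-(1+ps/2)}\,\d t$ then gives
$$A_s(f)=\tfrac{4}{p}\|f\|_{{\rm L}^p(\mu_\kappa)}^p+O\!\Big(s\int_1^\infty t^{-(1+ps/2)-(n/2+\chi)}\,\d t\Big)=\tfrac{4}{p}\|f\|_{{\rm L}^p(\mu_\kappa)}^p+O(s),$$
which together with Step 1 completes the proof.

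The main obstacle is keeping the approximation bound uniform in $s$. A pointwise-in-$t$ estimate on $\int P_t^\kappa(|h-h(x)|^p)(x)\,\mu_\kappa(\d x)$ expressed in terms of ${\rm N}_{s,p}^\kappa(h)$ would blow up as $s\to 0^+$, so the essential ingredient is the crude but $s$-independent bound $\int P_t^\kappa(|h-h(x)|^p)\,\d\mu_\kappa\le 2^p\|h\|_{{\rm L}^p(\mu_\kappa)}^p$, which uses stochastic completeness to collapse one integration against $p_t^\kappa$ to the identity on ${\rm L}^p(\mu_\kappa)$. Once that uniformity is secured, the simple-function computation is essentially algebraic, with ultracontractivity serving only to produce polynomial decay of the remainder $R(t)$ fast enough to absorb the prefactor $s$.
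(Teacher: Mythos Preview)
Your argument is correct, and it streamlines the paper's proof in two respects.

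The reduction step (your Step~1) coincides with the paper's Step~III/IV in Lemma~\ref{p-equ}: both obtain a uniform-in-$s$ Lipschitz bound $|A_s(f)^{1/p}-A_s(f_m)^{1/p}|\preceq\|f-f_m\|_{{\rm L}^p(\mu_\kappa)}$ from the crude estimate $\int P_t^\kappa(|h-h(x)|^p)\,\d\mu_\kappa\le 2^p\|h\|_{{\rm L}^p(\mu_\kappa)}^p$. You phrase this via Minkowski in the auxiliary $L^p$-space, the paper via the mean-value inequality $|a^p-b^p|\le p\max\{a,b\}^{p-1}|a-b|$; the content is the same.

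The genuine difference is in the treatment of simple functions. The paper uses the pointwise inequality $\big||a-b|^p-|a|^p-|b|^p\big|\le c_p(|a|^{p-1}|b|+|a||b|^{p-1})$, then H\"older and the $L^1\to L^p$ ultracontractivity bound; this argument needs $p>1$, so the case $p=1$ is handled separately by an upper bound (Lemma~\ref{L1-upper}) and a lower bound (Lemma~\ref{L1-lower}), the latter requiring a localization trick. Your Step~2 instead exploits the finite partition $\{A_i\}_{i=0}^N$ to reduce to the algebraic identity $\sum_{i,j}|a_i-a_j|^p B_{ij}(t)=2\|f\|_{{\rm L}^p(\mu_\kappa)}^p-R(t)$ and then kills $R(t)$ with the $L^1\to L^\infty$ ultracontractivity; this is insensitive to whether $p=1$ or $p>1$ and bypasses the separate treatment. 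One small clarification: the diagonal terms $i=j$ do \emph{not} vanish in $R(t)$ (they contribute $2|a_i|^p B_{ii}(t)$), but the ultracontractive decay $B_{ii}(t)\preceq t^{-(n/2+\chi)}\mu_\kappa(A_i)^2$ handles them exactly as the off-diagonal ones, so the argument goes through unchanged.
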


In order to prove Proposition \ref{big-time-2}, we consider two cases separately: $p=1$ and $p\in(1,\infty)$, which is further refined into three key lemmas.
\begin{lemma}\label{L1-upper}
For any $s>0$ and any $f\in {\rm L}^1(\mu_\kappa)$,
\begin{equation*}\begin{split}
s\int_1^\infty t^{-(1+\frac{s}{2})}\int_{\R^n}P_t^\kappa(|f-f(x)|)(x)\,\mu_\kappa(\d x)\d t
\leq 4\|f\|_{{\rm L}^1(\mu_\kappa)}.
\end{split}\end{equation*}
In particular,
\begin{equation*}\begin{split}
\limsup_{s\rightarrow0^+}s\int_1^\infty t^{-(1+\frac{s}{2})}\int_{\R^n}P_t^\kappa(|f-f(x)|)(x)\,\mu_\kappa(\d x)\d t
\leq 4\|f\|_{{\rm L}^1(\mu_\kappa)},\quad f\in {\rm L}^1(\mu_\kappa).
\end{split}\end{equation*}
\end{lemma}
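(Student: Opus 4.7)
The plan is to exploit the triangle inequality together with the symmetry \eqref{ker-symmetry} and the stochastic completeness \eqref{sto-complete} of the Dunkl heat kernel, after which the integral in $t$ can be evaluated explicitly. First I would bound the integrand pointwise: for every $t>0$ and $x\in\R^n$, the triangle inequality gives $|f(y)-f(x)|\leq|f(y)|+|f(x)|$, hence, using \eqref{sto-complete},
\begin{equation*}
P_t^\kappa(|f-f(x)|)(x)\leq P_t^\kappa(|f|)(x)+|f(x)|P_t^\kappa\mathbbm{1}(x)=P_t^\kappa(|f|)(x)+|f(x)|.
\end{equation*}

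Next I would integrate this inequality against $\mu_\kappa$. By Fubini's theorem together with the symmetry \eqref{ker-symmetry} and once more the stochastic completeness \eqref{sto-complete},
\begin{equation*}
\int_{\R^n}P_t^\kappa(|f|)(x)\,\mu_\kappa(\d x)=\int_{\R^n}|f(y)|\int_{\R^n}p_t^\kappa(y,x)\,\mu_\kappa(\d x)\d\mu_\kappa(\d y)=\|f\|_{{\rm L}^1(\mu_\kappa)},
\end{equation*}
so that
\begin{equation*}
\int_{\R^n}P_t^\kappa(|f-f(x)|)(x)\,\mu_\kappa(\d x)\leq 2\|f\|_{{\rm L}^1(\mu_\kappa)},\quad t>0.
\end{equation*}

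Finally, substituting this uniform-in-$t$ bound and computing the elementary integral
\begin{equation*}
s\int_1^\infty t^{-(1+\frac{s}{2})}\,\d t=s\cdot\frac{2}{s}=2
\end{equation*}
yields the factor $4\|f\|_{{\rm L}^1(\mu_\kappa)}$ on the right-hand side, which is exactly the desired bound. The ``in particular'' assertion is then obtained by taking $\limsup_{s\to0^+}$ on both sides, since the upper bound is independent of $s$.

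I do not expect a genuine obstacle here: this is a preparatory estimate, and the only subtle point is verifying that $P_t^\kappa$ preserves the $L^1(\mu_\kappa)$-mass of nonnegative functions, which is already encoded in \eqref{ker-symmetry} together with \eqref{sto-complete}. The argument is sharp in the $s\to0^+$ limit precisely because $s\int_1^\infty t^{-1-s/2}\d t=2$ regardless of $s$, so no loss is incurred in the $t$-integration.
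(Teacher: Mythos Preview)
Your proposal is correct and follows essentially the same approach as the paper's proof: triangle inequality, then the symmetry \eqref{ker-symmetry} and stochastic completeness \eqref{sto-complete} to reduce the spatial integral to $2\|f\|_{{\rm L}^1(\mu_\kappa)}$, followed by the explicit evaluation $s\int_1^\infty t^{-(1+s/2)}\,\d t=2$.
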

\begin{proof} It is easy to observe that, for every $s>0$ and each $f\in {\rm L}^1(\mu_\kappa)$, we have
\begin{equation*}\begin{split}
&s\int_1^\infty t^{-(1+\frac{s}{2})}\int_{\R^n}P_t^\kappa(|f-f(x)|)(x)\,\mu_\kappa(\d x)\d t\\
&\leq s\int_1^\infty t^{-(1+\frac{s}{2})}\int_{\R^n}\int_{\R^n}p_t^\kappa(x,y)(|f(y)|+|f(x)|)\,\mu_\kappa(\d y)\mu_\kappa(\d x)\d t\\
&=2s\|f\|_{{\rm L}^1(\mu_\kappa)}\int_1^\infty t^{-(1+\frac{s}{2})}\,\d t=4\|f\|_{{\rm L}^1(\mu_\kappa)},
\end{split}\end{equation*}
where we applied both \eqref{sto-complete} and \eqref{ker-symmetry} in the penultimate equality. Thus, we also obtain the last assertion.
\end{proof}

\begin{lemma}\label{L1-lower}
For any $f\in {\rm L}^1(\mu_\kappa)$,
\begin{equation*}\begin{split}
\liminf_{s\rightarrow0^+}s\int_1^\infty t^{-(1+\frac{s}{2})}\int_{\R^n}P_t^\kappa(|f-f(x)|)(x)\,\mu_\kappa(\d x)\d t
\geq 4\|f\|_{{\rm L}^1(\mu_\kappa)}.
\end{split}\end{equation*}
\end{lemma}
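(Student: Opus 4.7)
The plan is to prove an exact identity (matching both Lemmas \ref{L1-upper} and \ref{L1-lower}) rather than just the one-sided bound. The cornerstone is the elementary pointwise identity, valid for any real $a,b$:
$$|a-b| = |a| + |b| - 2\big(|a|\wedge|b|\big)\mathbbm{1}_{\{ab\geq 0\}}.$$
Applying this with $a=f(x)$, $b=f(y)$ and integrating against $p_t^\kappa(x,y)\,\mu_\kappa(\d y)$, then integrating in $x$ against $\mu_\kappa$, I use the kernel symmetry \eqref{ker-symmetry} and stochastic completeness \eqref{sto-complete} (so that $\int P_t^\kappa |f|\,\d\mu_\kappa = \|f\|_{L^1(\mu_\kappa)}$) to obtain the identity
$$\int_{\R^n} P_t^\kappa(|f-f(x)|)(x)\,\mu_\kappa(\d x) = 2\|f\|_{L^1(\mu_\kappa)} - 2Q_t(f),$$
where
$$Q_t(f) := \int_{\R^n}\int_{\R^n} \big(|f(x)|\wedge|f(y)|\big)\mathbbm{1}_{\{f(x)f(y)\geq 0\}}\,p_t^\kappa(x,y)\,\mu_\kappa(\d y)\,\mu_\kappa(\d x) \geq 0.$$

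Multiplying by $s\,t^{-(1+s/2)}$ and integrating over $t\in[1,\infty)$, and using the elementary computation $s\int_1^\infty t^{-(1+s/2)}\,\d t = 2$, the identity reduces the lemma to showing
$$\lim_{s\rightarrow 0^+}s\int_1^\infty t^{-(1+s/2)}Q_t(f)\,\d t = 0.$$
For this, I would first dominate
$$Q_t(f)\leq \int_{\R^n}\big(|f(x)|\wedge P_t^\kappa|f|(x)\big)\mu_\kappa(\d x).$$
By Lemma \ref{ultra-contra}(i) with $p=1$, $\|P_t^\kappa|f|\|_{L^\infty(\mu_\kappa)}\leq c\,t^{-(\chi+n/2)}\|f\|_{L^1(\mu_\kappa)}\to 0$ as $t\to\infty$, so the integrand tends to zero pointwise and is dominated by $|f|\in L^1(\mu_\kappa)$. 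Dominated convergence then yields $Q_t(f)\to 0$ as $t\to\infty$.

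To finish, I would use the standard truncation trick: fix $\epsilon>0$ and pick $T>1$ with $Q_t(f)<\epsilon$ for $t>T$; since $Q_t(f)\leq\|f\|_{L^1(\mu_\kappa)}$ in general, split
$$s\int_1^\infty t^{-(1+s/2)}Q_t(f)\,\d t \leq s\,\|f\|_{L^1(\mu_\kappa)}\log T + 2\epsilon,$$
and send $s\to 0^+$ followed by $\epsilon\to 0^+$. Combined with the identity, this gives the actual limit $4\|f\|_{L^1(\mu_\kappa)}$, which implies the $\liminf$ claim. The only real subtlety is noticing the algebraic identity for $|a-b|$ that converts the Lemma into a clean computation; once this is in hand, the heavy lifting is borne by the ultra-contractivity from Lemma \ref{ultra-contra}(i), which is already available, and the remaining arguments are routine.
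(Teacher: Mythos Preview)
Your proof is correct and follows a genuinely different route from the paper's. The paper proves the lower bound by a localization argument: given $\delta>0$, it chooses a compact set $K_\delta$ with $\int_{K_\delta^c}|f|\,\d\mu_\kappa<\delta$, restricts the double integral to the off-diagonal regions $K_\delta\times K_\delta^c$ and $K_\delta^c\times K_\delta$, applies the one-sided triangle inequality $|f(y)-f(x)|\geq |f(x)|-|f(y)|$ on each piece, and then uses ultracontractivity to show the resulting correction terms are $O(1)$ in $s$. By contrast, your pointwise identity $|a-b|=|a|+|b|-2(|a|\wedge|b|)\mathbbm{1}_{\{ab\geq0\}}$ converts the problem into an exact formula
$$\int_{\R^n}P_t^\kappa(|f-f(x)|)(x)\,\mu_\kappa(\d x)=2\|f\|_{{\rm L}^1(\mu_\kappa)}-2Q_t(f),$$
and it remains only to show $Q_t(f)\to0$, for which your domination $Q_t(f)\leq\int(|f|\wedge P_t^\kappa|f|)\,\d\mu_\kappa$ together with the uniform decay $\|P_t^\kappa|f|\|_{L^\infty}\preceq t^{-(\chi+n/2)}$ from Lemma~\ref{ultra-contra}(i) and dominated convergence does the job cleanly. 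Your argument has the advantage of yielding the full limit (and hence Lemmas~\ref{L1-upper} and~\ref{L1-lower} simultaneously) with no $\delta$-bookkeeping; the paper's approach, while slightly longer here, is perhaps more in the spirit of the $p>1$ case treated in Lemma~\ref{p-equ}, where no such convenient identity is available and one must genuinely estimate the cross term $|f(x)|^{p-1}|f(y)|$.
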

\begin{proof} Let $\delta\in(0,1)$. Since $f\in {\rm L}^1(\mu_\kappa)$,  there exists a compact set $K_\delta\subset\R^n$ such that
\begin{equation}\label{loc-L1}
\int_{\R^n\setminus K_\delta}|f|\,\d\mu_\kappa<\delta,\quad\mbox{or equivalently},\quad \int_{ K_\delta}|f|\,\d\mu_\kappa\geq\|f\|_{{\rm L}^1(\mu_\kappa)}-\delta.
\end{equation}
For any $t>0$, we decompose the integral as follows:
\begin{align}\label{L1-lower-1}
&\int_{\R^n}P_t^\kappa(|f-f(x)|)(x)\,\mu_\kappa(\d x)=\int_{\R^n}\int_{\R^n}p_t^\kappa(x,y)|f(y)-f(x)|\,\mu_\kappa(\d y)\mu_\kappa(\d x)\cr
&=\Big(\int_{K_\delta}\int_{\R^n}+\int_{K_\delta^c}\int_{\R^n}\Big)p_t^\kappa(x,y)|f(y)-f(x)|\,\mu_\kappa(\d y)\mu_\kappa(\d x)\cr
&\geq \Big(\int_{K_\delta}\int_{K_\delta^c}+\int_{K_\delta^c}\int_{K_\delta}\Big)p_t^\kappa(x,y)|f(y)-f(x)|\,\mu_\kappa(\d y)\mu_\kappa(\d x)\cr
&\geq\int_{K_\delta}\int_{K_\delta^c}p_t^\kappa(x,y)(|f(x)|-|f(y)|)\,\mu_\kappa(\d y)\mu_\kappa(\d x)\cr
&\quad+\int_{K_\delta^c}\int_{K_\delta}p_t^\kappa(x,y)(|f(y)|-|f(x)|)\,\mu_\kappa(\d y)\mu_\kappa(\d x)\cr
&=2\int_{K_\delta}|f(x)|\int_{K_\delta^c}p_t^\kappa(x,y)\,\mu_\kappa(\d y)\mu_\kappa(\d x)
-2\int_{K_\delta}\int_{K_\delta^c}p_t^\kappa(x,y)|f(y)|\,\mu_\kappa(\d y) \mu_\kappa(\d x)\cr
&=2\bigg(\int_{K_\delta}|f|\,\d\mu_\kappa-\int_{K_\delta}|f(x)|\int_{ K_\delta}p_t^\kappa(x,y)\,\mu_\kappa(\d y)\mu_\kappa(\d x)\cr
&\quad-\int_{K_\delta^c}|f(y)|\int_{K_\delta}p_t^\kappa(x,y)\,\mu_\kappa(\d x)\mu_\kappa(\d y)\bigg)\cr
&\geq 2\bigg(\|f\|_{{\rm L}^1(\mu_\kappa)}-\delta-\int_{K_\delta}|f(x)|\int_{ K_\delta}p_t^\kappa(x,y)\,\mu_\kappa(\d y)\mu_\kappa(\d x)\cr
&\quad-\int_{K_\delta^c}|f(y)|\int_{K_\delta}p_t^\kappa(x,y)\,\mu_\kappa(\d x)\mu_\kappa(\d y)\bigg),
\end{align}
where we applied the symmetry \eqref{ker-symmetry} in the second equality, the stochastic completeness \eqref{sto-complete} and Fubini's theorem in the third equality, and
\eqref{loc-L1} in the last inequality.

By applying the ultra-contractive property in Lemma \ref{ultra-contra}(i), we obtain
\begin{equation}\begin{split}\label{L1-lower-2}
\int_{K_\delta}|f(x)|\int_{ K_\delta}p_t^\kappa(x,y)\,\mu_\kappa(\d y)\mu_\kappa(\d x)
&=\int_{K_\delta}|f(x)|P_t^\kappa\mathbbm{1}_{K_\delta}(x)\,\mu_\kappa(\d x)\\
&\leq ct^{-(\chi+\frac{n}{2})}\mu_\kappa(K_\delta)\int_{K_\delta}|f(x)|\,\mu_\kappa(\d x)\\
&\leq ct^{-(\chi+\frac{n}{2})}\mu_\kappa(K_\delta)\|f\|_{{\rm L}^1(\mu_\kappa)},\quad t>0,
\end{split}\end{equation}
for some constant $c>0$. By Fubini's theorem, \eqref{loc-L1} and the sub-Markov property $P_t^\kappa\mathbbm{1}_A\leq1$ for any measurable $A\subset\R^n$ and any $t\geq0$, it is clear that
\begin{equation}\begin{split}\label{L1-lower-3}
\int_{K_\delta^c}|f(y)|\int_{K_\delta}p_t^\kappa(x,y)\,\mu_\kappa(\d x)\mu_\kappa(\d y)\leq\int_{K_\delta^c}|f|\,\d\mu_\kappa<\delta,\quad t>0.
\end{split}\end{equation}

Thus, substituting \eqref{L1-lower-2} and \eqref{L1-lower-3} into \eqref{L1-lower-1}, we arrive at
\begin{equation*}\begin{split}
&s\int_1^\infty t^{-(1+\frac{s}{2})}\int_{\R^n}P_t^\kappa(|f-f(x)|)(x)\,\mu_\kappa(\d x)\d t\\
&\geq
2s\int_1^\infty t^{-(1+\frac{s}{2})}\left[\|f\|_{{\rm L}^1(\mu_\kappa)}-2\delta-ct^{-(\chi+\frac{n}{2})}\mu_\kappa(K_\delta)\|f\|_{{\rm L}^1(\mu_\kappa)}\right]\d t\\
&= 4 (\|f\|_{{\rm L}^1(\mu_\kappa)}-2\delta)-4c \mu_\kappa(K_\delta)\|f\|_{{\rm L}^1(\mu_\kappa)}\frac{s}{s+2\chi+n},\quad s>0,
\end{split}\end{equation*}
for some constant $c>0$. As $s\rightarrow0^+$, the last term vanishes, leaving $4 (\|f\|_{{\rm L}^1(\mu_\kappa)}-2\delta)$. Since $\delta>0$ is arbitrary, the desired result follows.
\end{proof}

The following lemma relies crucially on the standard approximation technique in ${\rm L}^p(\mu_\kappa)$ via simple functions. Let $\mathcal{F}(\R^n,\mu_\kappa)$ be the class of all simple functions on $\R^n$ that vanish outside sets with finite $\mu_\kappa$-measure. Since $\mu_\kappa$ is $\sigma$-finite, the class $\mathcal{F}(\R^n,\mu_\kappa)$ is dense in ${\rm L}^p(\mu_\kappa)$ for all $p\in(0,\infty)$. This is a well-known result in measure theory; for instance, see \cite[Theorem 1.4.13]{Grafakos2014}.
\begin{lemma}\label{p-equ}
Let $p\in(1,\infty)$. Then, for any $f\in {\rm L}^p(\mu_\kappa)$,
\begin{equation*}
\lim_{s\rightarrow0^+}s\int_1^\infty t^{-(1+\frac{ps}{2})}\int_{\R^n}P_t^\kappa(|f-f(x)|^p)(x)\,\mu_\kappa(\d x)\d t=\frac{4}{p}\|f\|_{{\rm L}^p(\mu_\kappa)}^p.
\end{equation*}
\end{lemma}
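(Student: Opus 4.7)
The plan is to prove the claim first for simple functions in $\mathcal{F}(\R^n,\mu_\kappa)$ and then extend by density. For brevity, set
$$I_s(f):=s\int_1^\infty t^{-(1+\frac{ps}{2})}\int_{\R^n}P_t^\kappa(|f-f(x)|^p)(x)\,\mu_\kappa(\d x)\d t,\qquad \mathcal{N}_s(f):=I_s(f)^{1/p}.$$
Two preliminary facts will be used repeatedly. First, combining \eqref{ker-symmetry} with \eqref{sto-complete} and Fubini's theorem yields $\int_{\R^n}P_t^\kappa h\,\d\mu_\kappa=\int_{\R^n}h\,\d\mu_\kappa$ for every nonnegative measurable $h$; coupling this with the elementary bound $|a-b|^p\leq 2^{p-1}(|a|^p+|b|^p)$ and the identity $s\int_1^\infty t^{-(1+ps/2)}\,\d t=2/p$ furnishes the $s$-uniform bound
$$\mathcal{N}_s(f)\leq \big(\tfrac{2^{p+1}}{p}\big)^{1/p}\|f\|_{{\rm L}^p(\mu_\kappa)},\qquad s>0,\ f\in {\rm L}^p(\mu_\kappa).$$
Second, since $\mathcal{N}_s(f)$ is the $L^p$-norm of the function $(x,y)\mapsto f(x)-f(y)$ against the symmetric measure $s\int_1^\infty t^{-(1+ps/2)}p_t^\kappa(x,y)\,\d t\,\mu_\kappa(\d x)\mu_\kappa(\d y)$, Minkowski's inequality forces $\mathcal{N}_s$ to satisfy the triangle inequality on ${\rm L}^p(\mu_\kappa)$.

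For the simple-function step, take $f=\sum_{i=1}^N c_i\mathbbm{1}_{A_i}\in \mathcal{F}(\R^n,\mu_\kappa)$ with the $A_i$ pairwise disjoint of finite $\mu_\kappa$-measure, set $A_0:=\R^n\setminus\bigcup_{i=1}^N A_i$ and $c_0:=0$, and decompose
$$\int_{\R^n}P_t^\kappa(|f-f(x)|^p)(x)\,\mu_\kappa(\d x)=\sum_{i,j=0}^N|c_i-c_j|^p Q_{ij}(t),\qquad Q_{ij}(t):=\int_{A_i}P_t^\kappa\mathbbm{1}_{A_j}\,\d\mu_\kappa,$$
where $Q_{ij}=Q_{ji}$ by \eqref{ker-symmetry}. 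For $i,j\geq 1$, Lemma \ref{ultra-contra}(i) with $p=1$ delivers $Q_{ij}(t)\preceq t^{-(\chi+n/2)}\mu_\kappa(A_i)\mu_\kappa(A_j)$, whence $s\int_1^\infty t^{-(1+ps/2)}Q_{ij}(t)\,\d t\to 0$ as $s\to 0^+$. Exploiting the identity $Q_{i0}(t)=\mu_\kappa(A_i)-\sum_{k\geq 1}Q_{ik}(t)$ for $i\geq 1$ together with $Q_{0j}=Q_{j0}$, only the pairs $(i,0)$ and $(0,i)$ with $i\geq 1$ survive in the limit, each producing $|c_i|^p\mu_\kappa(A_i)\cdot (2/p)$; summing yields $\lim_{s\to 0^+}I_s(f)=(4/p)\sum_{i=1}^N|c_i|^p\mu_\kappa(A_i)=(4/p)\|f\|_{{\rm L}^p(\mu_\kappa)}^p$.

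For the density closure, given $f\in {\rm L}^p(\mu_\kappa)$ and $\varepsilon>0$, choose $g\in \mathcal{F}(\R^n,\mu_\kappa)$ with $\|f-g\|_{{\rm L}^p(\mu_\kappa)}<\varepsilon$. The triangle inequality $|\mathcal{N}_s(f)-\mathcal{N}_s(g)|\leq \mathcal{N}_s(f-g)$, combined with the uniform bound $\mathcal{N}_s(f-g)\leq (2^{p+1}/p)^{1/p}\varepsilon$ and the preceding limit $\lim_{s\to 0^+}\mathcal{N}_s(g)=(4/p)^{1/p}\|g\|_{{\rm L}^p(\mu_\kappa)}$, yields both a $\limsup$ upper bound and a $\liminf$ lower bound for $\mathcal{N}_s(f)$ that differ from $(4/p)^{1/p}\|g\|_{{\rm L}^p(\mu_\kappa)}$ by at most $(2^{p+1}/p)^{1/p}\varepsilon$; sending $\varepsilon\to 0^+$ (so that $\|g\|_{{\rm L}^p(\mu_\kappa)}\to \|f\|_{{\rm L}^p(\mu_\kappa)}$) forces $\lim_{s\to 0^+}\mathcal{N}_s(f)=(4/p)^{1/p}\|f\|_{{\rm L}^p(\mu_\kappa)}$, which is the claim. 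I expect the simple-function step to be the main obstacle, as it is precisely there that the ultra-contractivity of Lemma \ref{ultra-contra} enters to dispose of the interaction terms $Q_{ij}(t)$ with $i,j\geq 1$ after the $s$-renormalization, leaving only the ``$A_0$-interaction'' terms which furnish the desired constant $4/p$ in parallel with Lemmas \ref{L1-upper}--\ref{L1-lower}.
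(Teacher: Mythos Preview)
Your proof is correct and shares the paper's overall strategy—establish the limit first for simple functions using ultra-contractivity, then close by density in ${\rm L}^p(\mu_\kappa)$—but the execution differs at both stages. For the simple-function step, the paper compares $|f(x)-f(y)|^p$ with $|f(x)|^p+|f(y)|^p$ via the pointwise inequality $\big||a-b|^p-|a|^p-|b|^p\big|\le c_p(|a|^{p-1}|b|+|a||b|^{p-1})$, then applies H\"older and the ${\rm L}^1\!\to\!{\rm L}^p$ ultra-contractivity (Lemma~\ref{ultra-contra}(ii)) to the cross term; you instead expand $|f(x)-f(y)|^p$ combinatorially over the partition $\{A_i\}$ and use the ${\rm L}^1\!\to\!{\rm L}^\infty$ bound (Lemma~\ref{ultra-contra}(i)) to kill each $Q_{ij}$ with $i,j\ge1$, recovering the surviving $Q_{i0}$-contributions via stochastic completeness. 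For the density step, the paper bounds $\big|\|U(f)\|_{\mathcal L^p}^p-\|U(f_m)\|_{\mathcal L^p}^p\big|$ through $|a^p-b^p|\le p\max\{a,b\}^{p-1}|a-b|$ and then Minkowski; you work directly with $\mathcal N_s$ as an $L^p$-seminorm, invoking the triangle inequality together with your uniform bound $\mathcal N_s(h)\le(2^{p+1}/p)^{1/p}\|h\|_{{\rm L}^p(\mu_\kappa)}$. Your route is a bit more streamlined and makes the role of Minkowski more transparent, while the paper's pointwise inequality in Step~II is arguably more flexible (it does not rely on the specific level-set structure of simple functions). Either way, both arguments hinge on exactly the same two ingredients: the polynomial time decay from Lemma~\ref{ultra-contra} and the density of $\mathcal{F}(\R^n,\mu_\kappa)$ in ${\rm L}^p(\mu_\kappa)$.
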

\begin{proof} Let $p\in (1,\infty)$. We divided the proof into four steps.

\textbf{\textsc{Step I}}. Let $f\in {\rm L}^p(\mu_\kappa)$. By the stochastic completeness \eqref{sto-complete} and the symmetry \eqref{ker-symmetry}, it is easy to derive that
\begin{equation}\begin{split}\label{p-equ-1}
&s\int_1^\infty t^{-(1+\frac{ps}{2})}\int_{\R^n}\int_{\R^n}p_t^\kappa(x,y)(|f(y)|^p+|f(x)|^p)\,\mu_\kappa(\d y)\mu_\kappa(\d x)\d t\\
&=2s\|f\|_{{\rm L}^p(\mu_\kappa)}^p\int_1^\infty t^{-(1+\frac{ps}{2})}\,\d t=\frac{4}{p}\|f\|_{{\rm L}^p(\mu_\kappa)}^p<\infty,\quad s>0.
\end{split}\end{equation}

\textbf{\textsc{Step II}}. Let $f\in \mathcal{F}(\R^n,\mu_\kappa)$. Applying the elementary inequality:
$$\big||a-b|^p-|a|^p-|b|^p\big|\leq c_p(|a|^{p-1}|b|+|a||b|^{p-1}),\quad a,b\in\R,$$
for some constant $c_p>0$ depending only on $p$, we have
\begin{equation*}\begin{split}
&\bigg|\int_{\R^n}P_t^\kappa(|f-f(x)|^p)(x)\,\mu_\kappa(\d x)\\
&\quad -\int_{\R^n}\int_{\R^n}p_t^\kappa(x,y)(|f(y)|^p+|f(x)|^p)\,\mu_\kappa(\d y)\mu_\kappa(\d x)\bigg|\\
&\leq \int_{\R^n}\int_{\R^n}p_t^\kappa(x,y)\Big||f(y)-f(x)|^p-|f(y)|^p-|f(x)|^p\Big|\,\mu_\kappa(\d y)\mu_\kappa(\d x)\\
&\leq c_p\int_{\R^n}\int_{\R^n}p_t^\kappa(x,y)(|f(x)|^{p-1}|f(y)|+|f(x)||f(y)|^{p-1})\,\mu_\kappa(\d y)\mu_\kappa(\d x)\\
&=2c_p\int_{\R^n}|f(x)|^{p-1}P_t^\kappa|f|(x)\,\mu_\kappa(\d x),\quad t>0,
\end{split}\end{equation*}
where the last equality is due to the symmetry \eqref{ker-symmetry} again.  Then, by H\"{o}lder's inequality and Lemma \ref{ultra-contra}(ii), we deduce
\begin{equation}\begin{split}\label{p-equ-2}
{\rm I}(f)&:=\bigg|\int_1^\infty t^{-(1+\frac{ps}{2})}\int_{\R^n}P_t^\kappa(|f-f(x)|^p)(x)\,\mu_\kappa(\d x)\d t\\
&\quad -\int_1^\infty t^{-(1+\frac{ps}{2})}\int_{\R^n}\int_{\R^n}p_t^\kappa(x,y)(|f(y)|^p+|f(x)|^p)\,\mu_\kappa(\d y)\mu_\kappa(\d x)\d t\bigg|\\
&\leq2c_p \int_1^\infty t^{-(1+\frac{ps}{2})} \int_{\R^n}|f|^{p-1}P_t^\kappa|f|\,\d\mu_\kappa \d t\\
&\leq2c_p \int_1^\infty t^{-(1+\frac{ps}{2})}\|f\|_{{\rm L}^p(\mu_\kappa)}^{p-1}\|P_t^\kappa|f|\|_{{\rm L}^p(\mu_\kappa)}\,\d t\\
&\leq \tilde{c}_p \|f\|_{{\rm L}^p(\mu_\kappa)}^{p-1}\|f\|_{{\rm L}^1(\mu_\kappa)} \int_1^\infty  t^{-[1+\frac{ps}{2}+(\chi+\frac{n}{2})(1-\frac{1}{p})]}\,\d t \\
&=\tilde{c}_p \|f\|_{{\rm L}^p(\mu_\kappa)}^{p-1}\|f\|_{{\rm L}^1(\mu_\kappa)}\frac{1}{\frac{ps}{2}+(\chi+\frac{n}{2})(1-\frac{1}{p})},\quad s>0,
\end{split}\end{equation}
for some constant $\tilde{c}_p>0$.

\textbf{\textsc{Step III}}. Let $f\in {\rm L}^p(\mu_\kappa)$. Since $\mathcal{F}(\R^n,\mu_\kappa)$ is dense in ${\rm L}^p(\mu_\kappa)$, we may take a sequence of functions $(f_m)_{m\geq1}\subset \mathcal{F}(\R^n,\mu_\kappa)$ such that $f_m\rightarrow f$ $\mu_\kappa$-a.e. as $m\rightarrow\infty$ and $|f_m|\leq |f|$ $\mu_\kappa$-a.e. for every
$m\geq1$.  Then
\begin{equation}\begin{split}\label{p-equ-3}
{\rm J}_1&:= \int_{\R^n}\int_{\R^n}p_t^\kappa(x,y)(|f(y)|^p+|f(x)|^p)\,\mu_\kappa(\d y)\mu_\kappa(\d x) \\
&\quad-\int_{\R^n}\int_{\R^n}p_t^\kappa(x,y)(|f_m(y)|^p+|f_m(x)|^p)\,\mu_\kappa(\d y)\mu_\kappa(\d x)\\
&= \int_{\R^n}\int_{\R^n}p_t^\kappa(x,y)\big[(|f(y)|^p-|f_m(y)|^p)+(|f(x)|^p-|f_m(x)|^p)\big]\,\mu_\kappa(\d y)\mu_\kappa(\d x) \\
&=2\big(\|f\|_{{\rm L}^p(\mu_\kappa)}^p-\|f_m\|_{{\rm L}^p(\mu_\kappa)}^p\big),\quad m\geq1,\,t>0.
\end{split}\end{equation}

Let $\mathcal{L}^p={\rm L}^p(\R^n\times\R^n, \mu_\kappa\times\mu_\kappa)$ be the Lebesgue space equipped with the $L^p$-norm denoted by $\|\cdot\|_{\mathcal{L}^p}$. For a function $h$ on $\R^n$, we set
$$U(h)(x,y):=p_t^\kappa(x,y)^{1/p}|h(x)-h(y)|,\quad x,y\in\R^n,\, t>0.$$
Observe that  the mapping $h\mapsto U(h)(x,y)$ is sublinear. Using the elementary inequality
$$|a^p-b^p|\leq p\max\{a^{p-1},b^{p-1}\}|a-b|,\quad a,b\geq0,$$
together with  the triangle inequality for $\|\cdot\|_{\mathcal{L}^p}$, we deduce that
\begin{equation*}\begin{split}\label{p-equ-4}
{\rm J}_2&:= \bigg| \int_{\R^n}\int_{\R^n}p_t^\kappa(x,y)|f(y)-f(x)|^p\,\mu_\kappa(\d y)\mu_\kappa(\d x) \\
&\quad-\int_{\R^n}\int_{\R^n}p_t^\kappa(x,y)|f_m(y)-f_m(x)|^p\,\mu_\kappa(\d y)\mu_\kappa(\d x)\bigg|\\
&=\big|\|U(f)\|_{\mathcal{L}^p}^p-\|U(f_m)\|_{\mathcal{L}^p}^p\big|\\
&\leq p\max\left\{\|U(f)\|_{\mathcal{L}^p}^{p-1}, \|U(f_m)\|_{\mathcal{L}^p}^{p-1}\right\}\|U(f)-U(f_m)\|_{\mathcal{L}^p},\quad m\geq1,\,t>0.
\end{split}\end{equation*}
Employing \eqref{ker-symmetry} and \eqref{sto-complete}, we obtain
\begin{equation*}\begin{split}\label{p-equ-5}
\|U(f_m)\|_{\mathcal{L}^p}^p&\leq 2^{p-1}\int_{\R^n}\int_{\R^n}p_t^\kappa(x,y)\big(|f_m(y)|^p+|f_m(x)|^p\big)\,\mu_\kappa(\d y)\mu_\kappa(\d x)\cr
&\leq2^p\|f_m\|_{{\rm L}^p(\mu_\kappa)}^p\leq2^p\|f\|_{{\rm L}^p(\mu_\kappa)}^p,\quad m\geq1,\,t>0,\\
\|U(f)\|_{\mathcal{L}^p}^p&\leq 2^{p-1}\int_{\R^n}\int_{\R^n}p_t^\kappa(x,y)\big(|f(y)|^p+|f(x)|^p\big)\,\mu_\kappa(\d y)\mu_\kappa(\d x)\cr
&\leq 2^p\|f\|_{{\rm L}^p(\mu_\kappa)}^p,\quad  t>0,
\end{split}\end{equation*}
and
\begin{equation*}\begin{split}\label{p-equ-6}
&\|U(f)-U(f_m)\|_{\mathcal{L}^p}^p\\
&\leq\int_{\R^n}\int_{\R^n}p_t^\kappa(x,y)(|f(y)-f_m(y)|+|f(x)-f_m(x)|)^p\,\mu_\kappa(\d y)\mu_\kappa(\d x)\\
&\leq 2^{p-1}\int_{\R^n}\int_{\R^n}p_t^\kappa(x,y)\big(|f(y)-f_m(y)|^p+|f(x)-f_m(x)|^p\big)\,\mu_\kappa(\d y)\mu_\kappa(\d x)\\
&=2^p\|f-f_m\|_{{\rm L}^p(\mu_\kappa)}^p,\quad m\geq1,\,t>0,
\end{split}\end{equation*}
where we additionally used the triangle inequality and the elementary fact that $(a+b)^p\leq 2^{p-1}(a^p+b^p)$ for every $p\geq1$ and every $a,b\geq0$. Hence
\begin{equation}\begin{split}\label{p-equ-7}
{\rm J}_2 &\leq p2^p\|f\|_{{\rm L}^p(\mu_\kappa)}^{p-1} \|f-f_m\|_{{\rm L}^p(\mu_\kappa)},\quad m\geq1,\,t>0.
\end{split}\end{equation}

\textbf{\textsc{Step IV}}. Let $f$ and $(f_m)_{m\geq1}$ be the same as in \textbf{\textsc{Step III}}.  Putting \eqref{p-equ-1}, \eqref{p-equ-2}, \eqref{p-equ-3} and
\eqref{p-equ-7} together, we arrive at
\begin{equation}\begin{split}\label{p-equ-8}
&\left|s\int_1^\infty t^{-(1+\frac{ps}{2})}\int_{\R^n}P_t^\kappa(|f-f(x)|^p)(x)\,\mu_\kappa(\d x)\d t-\frac{4}{p}\|f\|_{{\rm L}^p(\mu_\kappa)}^p\right|\\
&\leq s\int_1^\infty t^{-(1+\frac{ps}{2})} {\rm J}_1\,\d t + s{\rm I}(f_m) + s\int_1^\infty t^{-(1+\frac{ps}{2})} {\rm J}_2\,\d t \\
&\leq p2^{p+1}\|f\|_{{\rm L}^p(\mu_\kappa)}^{p-1} \|f-f_m\|_{{\rm L}^p(\mu_\kappa)} + \frac{4}{p}\big|\|f\|_{{\rm L}^p(\mu_\kappa)}^p-\|f_m\|_{{\rm L}^p(\mu_\kappa)}^p\big|\\
&\quad+   \tilde{c}_p \|f_m\|_{{\rm L}^p(\mu_\kappa)}^{p-1}\|f_m\|_{{\rm L}^1(\mu_\kappa)}\frac{s}{\frac{ps}{2}+(\chi+\frac{n}{2})(1-\frac{1}{p})},\quad s>0,\,m\geq1.
\end{split}\end{equation}
It is clear that, by the dominated convergence theorem, we have $\|f_m-f\|_{{\rm L}^p(\mu_\kappa)}\rightarrow0$ as $m\rightarrow\infty$.
Therefore, letting $s\rightarrow0^+$ first and then sending $m\rightarrow\infty$ in \eqref{p-equ-8}, we complete the proof of Lemma \ref{p-equ}.
\end{proof}

\begin{proof}[Proof of Proposition \ref{big-time-2}]
Proposition \ref{big-time-2} is a direct consequence of Lemma \ref{L1-upper}, Lemma \ref{L1-lower} and Lemma \ref{p-equ}.
\end{proof}

Finally, the proof of Theorem \ref{MS-dunkl} easily follows.
\begin{proof}[Proof of Theorem \ref{MS-dunkl}] Let $1\leq p<\infty$ and take $f\in \cup_{0<s<1}{\rm B}_{s,p}^\kappa(\R^n)$.
 Note that
\begin{equation*}\begin{split}
&\left|s{\rm N}^\kappa_{s,p}(f)^p-\frac{4}{p}\|f\|^p_{{\rm L}^p(\mu_\kappa)}\right|\\
&\leq s\int_0^1 t^{-(1+\frac{ps}{2})}\int_{\R^n}P_t^\kappa(|f-f(x)|^p)(x)\,\mu_\kappa(\d x)\d t \\
&\quad+ \left|s\int_1^\infty t^{-(1+\frac{ps}{2})}\int_{\R^n}P_t^\kappa(|f-f(x)|^p)(x)\,\mu_\kappa(\d x)\d t-\frac{4}{p}\|f\|_{{\rm L}^p(\mu_\kappa)}^p\right|,\quad s\in(0,1).
\end{split}\end{equation*}
Clearly, Propositions \ref{small-time} and \ref{big-time-2} imply Theorem \ref{MS-dunkl}.
\end{proof}

\section{Asymptotic behaviors of the $s$-D-perimeter}\label{sec-perimeter}\hskip\parindent
In this section, we present the proof for Theorems \ref{lim-rel-s-per} and \ref{lim-rel-s-per-converse}. To this purpose, it is better for us to prepare some preliminary results.

We need the following lemma.
\begin{lemma}\label{lemma-lim-L}
Let $E,F$ be disjoint measurable subsets of $\R^n$ such that $L_{s_0}^\kappa(E,F)<\infty$ for some $s_0\in(0,1/2)$.
\begin{itemize}
\item[(1)] If $\min\{\mu_\kappa(E),\mu_\kappa(F)\}<\infty$, then
\begin{equation}\label{lemma-lim-L-1}
\lim_{s\rightarrow0^+}s\left|L_{s}^\kappa(E,F)-\int_E\int_F\int_1^\infty p_t^\kappa(x,y)t^{-(1+s)}\,\d t \mu_\kappa(\d y) \mu_\kappa(\d x)\right|=0.
\end{equation}
\item[(2)] If $\max\{\mu_\kappa(E),\mu_\kappa(F)\}<\infty$, then
\begin{equation}\label{lemma-lim-L-2}
\lim_{s\rightarrow0^+}s L_{s}^\kappa(E,F)=0.
\end{equation}
\end{itemize}
\end{lemma}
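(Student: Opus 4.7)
Both parts reduce, via Tonelli's theorem and the symmetry \eqref{ker-symmetry}, to splitting the time integral in $L_s^\kappa(E,F)$ at $t=1$. First I would rewrite
$$L_s^\kappa(E, F) = \int_0^\infty t^{-(1+s)} \int_E \int_F p_t^\kappa(x,y)\, \mu_\kappa(\d y)\, \mu_\kappa(\d x)\, \d t,$$
which identifies the quantity whose absolute value appears in \eqref{lemma-lim-L-1} as exactly the small-time piece
$$D_s := \int_0^1 t^{-(1+s)} \int_E \int_F p_t^\kappa(x,y)\, \mu_\kappa(\d y)\, \mu_\kappa(\d x)\, \d t.$$
Under the hypothesis of (1), stochastic completeness \eqref{sto-complete} together with $\int_1^\infty t^{-(1+s)}\,\d t = 1/s$ make the term being subtracted in \eqref{lemma-lim-L-1} finite for each $s>0$, so the rearrangement manipulates only finite quantities.

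For (1), I would exploit $t^{-(1+s)} \leq t^{-(1+s_0)}$ for $t\in(0,1)$ and $s\in(0,s_0]$ to obtain
$$D_s \leq \int_0^1 t^{-(1+s_0)} \int_E \int_F p_t^\kappa(x,y)\, \mu_\kappa(\d y)\, \mu_\kappa(\d x)\, \d t \leq L_{s_0}^\kappa(E, F) < \infty,$$
uniformly in $s\in(0,s_0]$. Multiplying by $s$ and letting $s\to 0^+$ then yields \eqref{lemma-lim-L-1}.

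For (2), writing $L_s^\kappa(E, F) = D_s + R_s$ with $R_s = \int_1^\infty t^{-(1+s)} \int_E P_t^\kappa \mathbbm{1}_F(x)\, \mu_\kappa(\d x)\, \d t$, I would handle $sD_s\to 0$ exactly as in (1). The crucial step is to control $R_s$ via the ultra-contractivity of Lemma \ref{ultra-contra}(i) with $p=1$, which gives a constant $c>0$ with $P_t^\kappa \mathbbm{1}_F(x) \leq c\, t^{-(\chi + n/2)} \mu_\kappa(F)$ for all $x\in\R^n$ and $t>0$. Integrating over $E$ and then in $t$ produces
$$R_s \leq c\, \mu_\kappa(E)\, \mu_\kappa(F) \int_1^\infty t^{-(1 + s + \chi + n/2)}\, \d t = \frac{c\, \mu_\kappa(E)\, \mu_\kappa(F)}{s + \chi + n/2}.$$
Since $\chi + n/2 > 0$, multiplying by $s$ and sending $s\to 0^+$ kills $sR_s$, and combined with $sD_s\to 0$ this gives \eqref{lemma-lim-L-2}.

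The argument is largely bookkeeping; the only genuine point is that two different bounds are required on the two time scales. The trivial estimate $t^{-(1+s)}\leq t^{-(1+s_0)}$ controls small times but is useless at infinity, whereas the ultra-contractive decay $P_t^\kappa \mathbbm{1}_F \preceq t^{-(\chi+n/2)}\mu_\kappa(F)$ only pays off at large $t$ and is precisely what forces the full finiteness hypothesis $\max\{\mu_\kappa(E), \mu_\kappa(F)\} < \infty$ in part (2), as opposed to the weaker $\min\{\mu_\kappa(E), \mu_\kappa(F)\}<\infty$ used in (1) merely to ensure that the rearranged integral is finite.
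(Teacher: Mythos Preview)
Your proof is correct and follows essentially the same approach as the paper: split the time integral at $t=1$, bound the small-time piece $D_s$ uniformly by $L_{s_0}^\kappa(E,F)$ via the monotonicity $t^{-(1+s)}\leq t^{-(1+s_0)}$ on $(0,1)$, and control the large-time piece $R_s$ using the ultra-contractive estimate of Lemma~\ref{ultra-contra}(i) with $p=1$. The paper's justification of finiteness of the subtracted term in part~(1) invokes the sub-Markovian property rather than stochastic completeness, but these amount to the same bound $P_t^\kappa\mathbbm{1}_F\leq 1$ here.
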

\begin{proof} (i) Without loss of generality, suppose $\mu_\kappa(E)<\infty$. Using Fubini's theorem and the sub-Markovian property of $(P_t^\kappa)_{t\geq0}$,
we have
\begin{equation*}\begin{split}
&\int_E\int_F\int_1^\infty p_t^\kappa(x,y)t^{-(1+s)}\,\d t \mu_\kappa(\d y) \mu_\kappa(\d x)\\
&= \int_1^\infty\int_E P_t^\kappa\mathbbm{1}_F(x)t^{-(1+s)}\, \mu_\kappa(\d x)\d t\\
&\leq \mu_\kappa(E)\int_1^\infty t^{-(1+s)}\,\d t\\
& =\frac{\mu_\kappa(E)}{s}<\infty, \quad s>0.
\end{split}\end{equation*}
Now, fix $s\in(0,s_0)$. Then
\begin{equation*}\begin{split}
&\left|L_{s}^\kappa(E,F)-\int_E\int_F\int_1^\infty p_t^\kappa(x,y)t^{-(1+s)}\,\d t \mu_\kappa(\d y) \mu_\kappa(\d x)\right|\\
&=\int_E\int_F\int_0^1 p_t^\kappa(x,y)t^{-(1+s)}\,\d t \mu_\kappa(\d y) \mu_\kappa(\d x)\\
&\leq \int_E\int_F\int_0^1 p_t^\kappa(x,y)t^{-(1+s_0)}\,\d t \mu_\kappa(\d y) \mu_\kappa(\d x)\\
&\leq L_{s_0}^\kappa(E,F)<\infty.
\end{split}\end{equation*}
Thus,  multiplying by $s$ and taking the limit as $s\rightarrow0^+$, we immediately deduce that \eqref{lemma-lim-L-1} holds.

(ii) By the ultra-contractivity \eqref{ultra-1}, Fubini's theorem and the given assumption, we have
\begin{equation*}\begin{split}
&s\int_E\int_F\int_1^\infty p_t^\kappa(x,y)t^{-(1+s)}\,\d t \mu_\kappa(\d y) \mu_\kappa(\d x)\\
&=s\int_E\int_1^\infty P_t^\kappa\mathbbm{1}_F(x)t^{-(1+s)}\,\d t  \mu_\kappa(\d x)\\
&\preceq \mu_\kappa(E)\mu_\kappa(F)s\int_1^\infty t^{-(1+s+\chi+\frac{n}{2})}\,\d t \\
&=\mu_\kappa(E)\mu_\kappa(F)\frac{s}{s+\chi+\frac{n}{2}}\rightarrow0,\quad\mbox{as }s\rightarrow0^+.
\end{split}\end{equation*}
Combining this with \eqref{lemma-lim-L-1}, we conclude that \eqref{lemma-lim-L-2} holds.
\end{proof}

In the next lemma, we collect some important properties of the function $\Lambda^\kappa_E$ defined in \eqref{Lambda-E}: for a measurable set $E\subset\R^n$,
\begin{equation*}
\Lambda^\kappa_E(x,r,s)=\int_1^\infty P_t^\kappa(\mathbbm{1}_{E\setminus B_d(x,r)})(x)\,\frac{\d t}{t^{1+s}},\quad x\in\R^n,\, r,s>0.
\end{equation*}
\begin{lemma}\label{property-Lambda-E}
Let $E\subset\R^n$ be a measurable set. Suppose that the limit $\lim_{s\rightarrow0^+}s\Lambda^\kappa_E(x_\ast,r_\ast,s)$ exists for some pair $(x_\ast,r_\ast)\in\R^n\times(0,\infty)$. Then the following assertions hold:
\begin{itemize}
\item[(a)] The limit $\lim_{s\rightarrow0^+}s\Lambda^\kappa_E(x,r,s)$ exists for any $(x,r)\in\R^n\times(0,\infty)$, takes values in the interval $[0,1]$, and is independent of both $x$ and $r$. We denote this common value by $\Xi^\kappa_E$.
\item[(b)] For every $(x,r)\in\R^n\times(0,\infty)$,  $\Xi^\kappa_E=1-\Xi^\kappa_{E^c}$.
\end{itemize}
\end{lemma}
\begin{proof} We divided the proof into four parts.

(1) For any $r_\ast<R<\infty$, Lemma \ref{ultra-contra}(i) and the $G$-invariance of $\mu_\kappa$ imply
\begin{equation*}\begin{split}
|\Lambda^\kappa_E(x_\ast,r_\ast,s)-\Lambda^\kappa_E(x_\ast,R,s)|&\le\int_1^\infty\int_{B_d(x_\ast,R)\setminus B_d(x_\ast,r_\ast)}p_t^\kappa(x_\ast,y)\,\mu_\kappa(\d y)\frac{\d t}{t^{1+s}}\\
&\leq \int_1^\infty P_t^\kappa\mathbbm{1}_{B_d(x_\ast,R)}(x_\ast) t^{-(1+s)}\,\d t\\
&\preceq \mu_\kappa(B_d(x_\ast,R))\int_1^\infty t^{-(1+s+\chi+\frac{n}{2})}\,\d t\\
&\leq |G|V_\kappa(x_\ast,R)\frac{1}{s+\chi+n/2},\quad s>0,
\end{split}\end{equation*}
which yields
$$\lim_{s\rightarrow0^+}s|\Lambda^\kappa_E(x_\ast,r_\ast,s)-\Lambda^\kappa_E(x_\ast,R,s)|=0,\quad r_\ast<R<\infty.$$
Similarly, for any $0<r<r_\ast$, we have
\begin{equation*}\begin{split}
|\Lambda^\kappa_E(x_\ast,r_\ast,s)-\Lambda^\kappa_E(x_\ast,r,s)|\preceq |G|V_\kappa(x_\ast,r_\ast)\frac{1}{s+\chi+n/2},\quad s>0,
\end{split}\end{equation*}
and thus,
\begin{equation*}\begin{split}
\lim_{s\rightarrow0^+}s|\Lambda^\kappa_E(x_\ast,r_\ast,s)-\Lambda^\kappa_E(x_\ast,r,s)|=0,\quad 0<r<r_\ast.
\end{split}\end{equation*}
Consequently, the limit $\lim_{s\rightarrow0^+}s\Lambda^\kappa_E(x_\ast,r,s)$ does not depend on the choice of $r>0$.

(2) For any  $x\in\R^n$, we have
\begin{equation*}\begin{split}
&|\Lambda^\kappa_E(x,1,s)-\Lambda^\kappa_E(x_\ast,1,s)|\\
&\leq\int_1^\infty\int_{E\cap B_d(x,1)^c}|p_t^\kappa(x,z)-p_t^\kappa(x_\ast,z)|\, \mu_\kappa(\d z)t^{-(1+s)}\,\d t\\
&\quad +\int_1^\infty\int_{B_d(x,1)\triangle B_d(x_\ast,1)}p_t^\kappa(x_\ast,z)\, \mu_\kappa(\d z)t^{-(1+s)}\,\d t\\
&=: {\rm J}_1 +{\rm J}_2,\quad s>0.
\end{split}\end{equation*}
 Employing \eqref{reg-kernel}, Lemma \ref{int-ker-bd} and \eqref{vol-comp}, we deduce that
\begin{equation*}\begin{split}
{\rm J}_1 &\preceq |x-x_\ast|(1+|x-x_\ast|)^{n+2\chi}
e^{\frac{|x-x_\ast|^2}{c_1}}\\
&\quad\times\int_1^\infty \int_{ B_d(x,1)^c} \frac{1}{\sqrt{t}V_\kappa(x,\sqrt{2t})}e^{-c_2\frac{d(x,z)^2}{t}}\,  \mu_\kappa(\d z)\frac{\d t}{t^{1+s}}\\
&\preceq |x-x_\ast| (1+|x-x_\ast|)^{n+2\chi} e^{\frac{|x-x_\ast|^2}{c_1}} \int_1^\infty e^{-c_2/t}t^{-(s+3/2)}\,\d t\\
&\leq c_3|x-x_\ast| (1+|x-x_\ast|)^{n+2\chi} e^{|x-x_\ast|^2/c_1},\quad x\in\R^n,\,s>0,
\end{split}\end{equation*}
where $c_1,c_2,c_3$ are some positive constants. Applying Lemma \ref{ultra-contra}(i) again, we deduce
\begin{equation*}\begin{split}
{\rm J}_2 &\preceq \int_1^\infty P_t^\kappa\mathbbm{1}_{B_d(x,1)}(x_\ast)t^{-(1+s)}\,\d t+ \int_1^\infty P_t^\kappa\mathbbm{1}_{B_d(x_\ast,1)}(x_\ast)t^{-(1+s)}\,\d t\\
&\preceq[\mu_\kappa(B_d(x,1))+\mu_\kappa(B_d(x_\ast,1))]\int_1^\infty t^{-(1+s+\chi+\frac{n}{2})}\,\d t\\
&\preceq\frac{V_\kappa(x,1)+V_\kappa(x_\ast,1)}{s+\chi+\frac{n}{2}},\quad x\in\R^n,\,s>0.
\end{split}\end{equation*}
Combining the estimates of ${\rm J}_1$ and ${\rm J}_2$ together, we arrive at
$$\lim_{s\rightarrow0^+}s|\Lambda^\kappa_E(x,1,s)-\Lambda^\kappa_E(x_\ast,1,s)|=0,\quad x\in\R^n.$$
Thus, the limit $\lim_{s\rightarrow0^+}s\Lambda^\kappa_E(x,1,s)$ is independent of $x\in\R^n$.

(3) Using the sub-Markovian property of $(P_t^\kappa)_{t\geq0}$, it follows from  \eqref{Lambda-E} that
$$\Lambda^\kappa_E(x,r,s)\leq\int_1^\infty t^{-(1+s)}\,\d t=\frac{1}{s},\quad x\in\R^n,\,r>0,\,s>0.$$
This implies that $s\Lambda^\kappa_E(x,r,s)\in[0,1]$ for all $x\in\R^n,\,r>0,\,s>0$. Thus,  combining
(1) and (2) together, we conclude that for every $x\in\R^n$ and $r>0$, the limit
$\lim_{s\rightarrow0^+}s\Lambda^\kappa_E(x,r,s)$ exists and lies in $[0,1]$.

(4) It suffices to show that $\Xi^\kappa_{\R^n}=1$, since $\Xi^\kappa_{E^c}+\Xi^\kappa_{E}=\Xi^\kappa_{\R^n}$. Indeed, by the stochastic completeness \eqref{sto-complete},
\begin{equation*}\begin{split}
{\rm I}_1(s)&:=s\int_1^\infty\int_{\R^n}p_t^\kappa(x,y)\,\mu_\kappa(\d y)\frac{\d t}{t^{1+s}}\\
&=s\int_1^\infty t^{-(1+s)}\,\d t=1,\quad s>0,\,x\in\R^n,
\end{split}\end{equation*}
and by \eqref{ultra-1},
\begin{equation*}\begin{split}
{\rm I}_2(s)&:=s\int_1^\infty P_t^\kappa\mathbbm{1}_{B_d(x,r)}(x)\,\frac{\d t}{t^{1+s}}\\
&\preceq s\mu_\kappa(B_d(x,r))\int_1^\infty t^{-(1+s+\chi+\frac{n}{2})}\,\d t\\
&\preceq \frac{sV_\kappa(x,r)}{s+\chi+\frac{n}{2}},\quad r,s>0,\,x\in\R^n.
\end{split}\end{equation*}
Thus
\begin{equation*}\begin{split}
\Xi^\kappa_{\R^n}&=\lim_{s\rightarrow0^+}s\int_1^\infty\int_{B_d(x,r)^c}p_t^\kappa(x,y)\,\mu_\kappa(\d y)\frac{\d t}{t^{1+s}}\\
&=\lim_{s\rightarrow0^+}\big[{\rm I}_1(s)-{\rm I}_2(s)\big]=1.
\end{split}\end{equation*}
\end{proof}
\begin{remark}\label{rk-Xi-Rn}
From the proof of Lemma \ref{property-Lambda-E}(b), we observe that for every $x\in\R^n$ and any $r>0$, the limit $\lim_{s\rightarrow0^+}s\Lambda^\kappa_{\R^n}(x,r,s)$ always exists and equals $1$.
\end{remark}

Now we are ready to prove Theorems \ref{lim-rel-s-per} and \ref{lim-rel-s-per-converse}.
\begin{proof}[Proof of Theorem \ref{lim-rel-s-per}]
Note that, for every $s\in(0,1/2)$,
\begin{equation}\begin{split}\label{main2-pf-1}
{\rm Per}_s^\kappa(E,\Omega)=2\big[ L_s^\kappa(E\cap \Omega,E^c\cap \Omega)+L_s^\kappa(E\cap \Omega,E^c\cap \Omega^c)+L_s^\kappa(E\cap \Omega^c,E^c\cap \Omega)\big]
\end{split}\end{equation}
We analyze each term on the right-hand side separately.

By assumption,  $L_{s_0}^\kappa(E\cap \Omega,E^c\cap \Omega)$, $L_{s_0}^\kappa(E\cap \Omega,E^c\cap \Omega^c)$ and $L_{s_0}^\kappa(E\cap \Omega^c,E^c\cap
\Omega)$ are all finite for some $s_0\in(0,1/2)$.

(i)  We deal with $L_s^\kappa(E\cap \Omega,E^c\cap \Omega)$. Since $E\cap \Omega$ and $E^c\cap \Omega$ are clearly disjoint and both $\mu_\kappa(E\cap \Omega)$ and
$\mu_\kappa(E^c\cap \Omega)$ are finite, Lemma \ref{lemma-lim-L}(2) immediate implies
\begin{equation}\begin{split}\label{main2-pf-2}
\lim_{s\rightarrow0^+} sL_s^\kappa(E\cap \Omega,E^c\cap \Omega)=0.
\end{split}\end{equation}

(ii)  We deal with $L_s^\kappa(E\cap \Omega^c,E^c\cap \Omega)$. Since $\mu_\kappa(E^c\cap \Omega)<\infty$, Lemma \ref{lemma-lim-L}(1) gives
\begin{equation*}\label{main2-pf-ii-1}
\lim_{s\rightarrow0^+}sL_s^\kappa(E^c\cap \Omega,E\cap \Omega^c)=\lim_{s\rightarrow0^+}s\int_1^\infty\int_{E^c\cap \Omega}\int_{E\cap \Omega^c} p_t^\kappa(x,y)\,\mu_\kappa(\d
y) \mu_\kappa(\d x)\frac{\d t }{t^{1+s}}.
\end{equation*}

Fix $x_0\in  \Omega$ and $R>10{\rm diam}(\Omega)$ such that  $B_d(x_0,R)\supset\Omega$ (which is possible because $\Omega$ is bounded and the pseudo-metric $d$ is dominated by the Euclidean metric $|\cdot-\cdot|$). We split the integral:
\begin{eqnarray*}\label{main2-pf-ii-2}\begin{split}
&\lim_{s\rightarrow0^+}sL_s^\kappa(E^c\cap \Omega,E\cap \Omega^c)\\
&=\lim_{s\rightarrow0^+}s\bigg[\int_1^\infty\int_{E^c\cap \Omega}\int_{E\cap \Omega^c\cap B_d(x_0,R)} p_t^\kappa(x,y)\, \mu_\kappa(\d y) \mu_\kappa(\d x)\frac{\d t
}{t^{1+s}}\cr
&\quad + \int_1^\infty\int_{E^c\cap \Omega}\int_{E\cap  B_d(x_0,R)^c} p_t^\kappa(x,y)\,\mu_\kappa(\d y) \mu_\kappa(\d x)\frac{\d t}{t^{1+s}}\bigg].
\end{split}\end{eqnarray*}
By Lemma \ref{lemma-lim-L}(2), the first term vanishes:
\begin{equation*}\label{main2-pf-ii-3}\begin{split}
&\lim_{s\rightarrow0^+}s\int_1^\infty \int_{E^c\cap \Omega}\int_{E\cap \Omega^c\cap B_d(x_0,R)}p_t^\kappa(x,y)\, \mu_\kappa(\d y) \mu_\kappa(\d x)\frac{\d t}{t^{1+s}}\cr
&\leq\lim_{s\rightarrow0^+}sL_s^\kappa(E^c\cap \Omega,E\cap \Omega^c\cap B_d(x_0,R))=0.
\end{split}\end{equation*}
Hence
\begin{equation}\begin{split}\label{main2-pf-ii-4}
&\lim_{s\rightarrow0^+}sL_s^\kappa(E^c\cap \Omega,E\cap \Omega^c)\\
&=\lim_{s\rightarrow0^+}s\int_1^\infty\int_{E^c\cap \Omega}\int_{E\cap  B_d(x_0,R)^c} p_t^\kappa(x,y)\, \mu_\kappa(\d y) \mu_\kappa(\d x)\frac{\d t}{t^{1+s}}.
\end{split}\end{equation}

\textbf{Claim}: For $x\in E^c\cap \Omega$,
\begin{equation}\label{main2-pf-ii-5}\begin{split}
&\lim_{s\rightarrow0^+}s\int_1^\infty\int_{E\cap B_d(x_0,R)^c} p_t^\kappa(x,y)\, \mu_\kappa(\d y)\,\frac{\d t}{t^{1+s}}\cr
&=\lim_{s\rightarrow0^+}s\int_1^\infty\int_{E\cap  B_d(x,R/2)^c} p_t^\kappa(x,y)\, \mu_\kappa(\d y)\,\frac{\d t}{t^{1+s}}.
\end{split}\end{equation}
Indeed, since $B_d(x,R/2)\subset B_d(x_0,R)$, using \eqref{kernel-bound} and Lemma \ref{int-ker-bd}, we have
\begin{equation*}\label{main2-pf-ii-6}\begin{split}
&\bigg| \int_1^\infty\int_{E\setminus B_d(x_0,R)} p_t^\kappa(x,y)\,\mu_\kappa(\d y)\,\frac{\d t}{t^{1+s}}-\int_1^\infty\int_{E\setminus  B_d(x,R/2)} p_t^\kappa(x,y)\,
\mu_\kappa(\d y)\,\frac{\d t}{t^{1+s}}\bigg|\cr
&\leq \int_1^\infty\int_{B_d(x_0,R)\setminus B_d(x,R/2)}p_t^\kappa(x,y)\, \mu_\kappa(\d y)\,\frac{\d t}{t^{1+s}}\cr
&\preceq\int_1^\infty\int_{B_d(x,R/2)^c}\frac{e^{-c_1d(x,y)^2/t}}{V_\kappa(x,\sqrt{t})}\,\mu_\kappa(\d y)\frac{\d t}{t^{1+s}}\cr
&\preceq \int_1^\infty e^{-c_2R^2/t}t^{-(1+s)}\,\d t\cr
&\preceq R^{-2s},\quad s\in(0,1/2),
\end{split}\end{equation*}
for some positive constants $c_1$ and $c_2$. Multiplying the above quantities by $s$ and  taking the limit as $s\rightarrow0^+$, the claim follows.

Combining \eqref{main2-pf-ii-4}, \eqref{main2-pf-ii-5}, Lemma \ref{property-Lambda-E}, and the dominated convergence theorem, we obtain
\begin{equation}\label{main2-pf-ii-7}\begin{split}
\lim_{s\rightarrow0^+}sL_s^\kappa(E^c\cap \Omega,E\cap \Omega^c)&=\lim_{s\rightarrow0^+}\int_{E^c\cap \Omega} s\Lambda^\kappa_E(x,R/2,s)\,\mu_\kappa(\d x)\cr
&=\Xi^\kappa_E \mu_\kappa(E^c\cap \Omega).
\end{split}\end{equation}

(iii) For $L_s^\kappa(E\cap \Omega,E^c\cap \Omega^c)$, the same argument as in (ii) shows
\begin{equation}\label{main2-pf-ii-8}\begin{split}
\lim_{s\rightarrow0^+}sL_s^\kappa(E\cap \Omega,E^c\cap \Omega^c)=\Xi^\kappa_{E^c} \mu_\kappa(E\cap \Omega).
\end{split}\end{equation}
We omit the details here to save some space.

Finally, putting \eqref{main2-pf-1}, \eqref{main2-pf-2}, \eqref{main2-pf-ii-7} and \eqref{main2-pf-ii-8} together, we immediately
conclude  that $\lim_{s\rightarrow0^+}s{\rm Per}_s^\kappa(E,\Omega)$ exists and
$$\lim_{s\rightarrow0^+}s{\rm Per}_s^\kappa(E,\Omega)=2\Xi^\kappa_{E^c}\mu_\kappa(E\cap \Omega)+2\Xi^\kappa_E\mu_\kappa(E^c\cap\Omega).$$
Combining this with Lemma \ref{property-Lambda-E}, we obtain the second equality of \eqref{lim-rel-s-per-1}.
\end{proof}

\begin{proof}[Proof of Theorem \ref{lim-rel-s-per-converse}]  The proof is divided into three parts.

 \underline{\textsc{Part I}}.
We begin by verifying that ${\rm Per}_{s_0}^\kappa(E\cap\Omega,\Omega)<\infty$ for some $s_0\in(0,1/2)$. Given the assumption that ${\rm Per}_{s_0}^\kappa(E,\Omega)<\infty$ for some $s_0\in(0,1/2)$,  both $L_{s_0}^\kappa(E\cap \Omega,E^c\cap \Omega)$ and $L_{s_0}^\kappa(E\cap \Omega,E^c\cap \Omega^c)$ are finite. Thus, it suffices to prove that $L_{s_0}^\kappa(E\cap \Omega, E\cap\Omega^c)<\infty$ for some $s_0\in(0,1/2)$.

For every $r\geq0$, denote ${\rm D}_r={\rm D}_r^\Omega(\Omega^c)$ for short. Employing the upper bound \eqref{kernel-bound}, we find a constant $c_1>0$ such that
\begin{equation}\begin{split}\label{pf-thm2-2}
\int_{\Omega}\int_{\Omega^c}p_t^\kappa(x,y)\, \mu_\kappa(\d y)\mu_\kappa( \d x)&\preceq\int_{\Omega}\int_{\Omega^c}\frac{1}{V_\kappa(x,\sqrt{t})}e^{-c_1\frac{d(x,y)^2}{t}} \,\mu_\kappa(\d y) \mu_\kappa(\d x)\cr
&= \int_{\Omega\cap {\rm D}_1}\int_{\Omega^c} \frac{1}{V_\kappa(x,\sqrt{t})} e^{-c_1\frac{d(x,y)^2}{t}} \,\mu_\kappa(\d y) \mu_\kappa(\d x)\cr
&\quad+ \int_{\Omega\cap {\rm D}_1^c}\int_{\Omega^c} \frac{1}{V_\kappa(x,\sqrt{t})} e^{-c_1\frac{d(x,y)^2}{t}} \,\mu_\kappa(\d y) \mu_\kappa(\d x)\cr
&=: {\rm L}_1(t)+{\rm L}_2(t),\quad t>0,
\end{split}\end{equation}

To estimate  ${\rm L}_1(t)$, let
$$T_j=\{x\in \Omega:\ 2^{-(j+1)}< d(x,\Omega^c)\leq2^{-j}\},\quad j=0,1,2,\cdots.$$
We \textbf{claim} that
$$\cup_{j=0}^\infty T_j={\rm D}_1.$$
Indeed, the inclusion $\cup_{j=0}^\infty T_j \subset {\rm D}_1$ is immediate from the definition of $T_j$; hence, it suffices to show the converse inclusion.  Let $x\in{\rm D}_1$. Since $G$ is a finite group,  there exists some $g_x\in G$ such that
$$d(x,\Omega^c)=\inf_{y\in\Omega^c}\min_{g\in G}|gx-y|=\inf_{y\in\Omega^c}|g_x x-y|.$$
By the $G$-invariance of $\Omega$, we have $g_x x\in\Omega$. The openness of $\Omega$ guarantees the existence of $0<\delta<1$ such that $B(g_x x,\delta)\subset \Omega$, which implies $d(x,\Omega^c)\ge \delta>0$. Hence, we can choose a positive integer $k_0$ such that $2^{-(k_0+1)}<\delta$. Then, $x\in\bigcup_{j=0}^{k_0}T_j$, establishing $\bigcup_{j=0}^\infty T_j\supset {\rm D}_1$.

Note that for each $j=0,1,2,\cdots$, if $x\in T_j$ and $y\in\Omega^c$, then $d(x,y)\geq d(x,\Omega^c)> 2^{-(j+1)}$, and consequently, $y\in B_d(x,2^{-(j+1)})^c$. By Lemma \ref{int-ker-bd}  and assumption \eqref{boundary}, we deduce
\begin{equation}\begin{split}\label{pf-thm2-4}
{\rm L}_1(t)&=\sum_{j=0}^\infty\int_{T_j\cap {\rm D}_1} \int_{\Omega^c}  \frac{1}{V_\kappa(x,\sqrt{t})} e^{-c_1\frac{d(x,y)^2}{t}} \,\mu_\kappa(\d y) \mu_\kappa(\d x) \cr
&\leq\sum_{j=0}^\infty\int_{T_j\cap {\rm D}_1} \int_{B_d(x,2^{-(j+1)})^c}  \frac{1}{V_\kappa(x,\sqrt{t})} e^{-c_1\frac{d(x,y)^2}{t}}  \,\mu_\kappa(\d y) \mu_\kappa(\d x)\cr
&\preceq\sum_{j=0}^\infty e^{-c_2 4^{-j}/t} \mu_\kappa({\rm D}_{2^{-j}})\cr
&\preceq\sum_{j=0}^\infty e^{-c_2 4^{-j}/t} 2^{-\eta j},\quad t>0,
\end{split}\end{equation}
for some positive constant $c_2$.

For ${\rm L}_2(t)$, note that for any $x\in {\rm D}_1^c$ and any $y\in\Omega^c$, we have $y\in B_d(x,1)^c$.  Applying Lemma \ref{int-ker-bd} again,
\begin{equation}\begin{split}\label{pf-thm2-3}
{\rm L}_2(t)&\leq  \int_{\Omega\cap {\rm D}_1^c}\int_{\Omega^c\cap B_d(x,1)^c}  \frac{1}{V_\kappa(x,\sqrt{t})} e^{-c_1\frac{d(x,y)^2}{t}}  \mu_\kappa(\d y) \mu_\kappa(\d x)\,\cr
&\preceq \int_{\Omega} e^{-c_3/t}\,\mu_\kappa(\d x)=\mu_\kappa(\Omega) e^{-c_3/t},\quad t>0,
\end{split}\end{equation}
for some positive constant $c_3$.

Combining \eqref{pf-thm2-2}, \eqref{pf-thm2-3} and \eqref{pf-thm2-4} together, since $\eta>2s_0>0$, we arrive at
\begin{equation}\begin{split}\label{pf-thm2-5}
&L^\kappa_{s_0}(E\cap \Omega, E\cap \Omega^c)=\int_0^\infty t^{-(1+s_0)}\int_{E\cap\Omega}\int_{E\cap\Omega^c}p_t^\kappa(x,y)\,\mu_\kappa(\d y)\mu_\kappa(\d x)\d t\cr
&\preceq \int_0^\infty t^{-(1+s_0)} e^{-c_3/t}\,\d t  +    \sum_{j=0}^\infty 2^{-\eta j}\int_0^\infty t^{-(1+s_0)}e^{-c_2 4^{-j}/t}\,\d t\cr
&\sim \int_0^\infty e^{-u}u^{s_0-1}\,\d u +\sum_{j=0}^\infty 2^{-j(\eta-2s_0)}\int_0^\infty e^{-u}u^{s_0-1}\,\d u\cr
&<\infty.
\end{split}\end{equation}

 \underline{\textsc{Part II}}. In this part, we prove assertions (a) and (b). Let $x_0\in  \Omega$ and $R>20{\rm diam}(\Omega)$ such that $B_d(x_0,R)\supset\Omega$.
Following the approach in the proof of Theorem \ref{lim-rel-s-per}, we conclude that
\begin{equation*}\begin{split}
&\lim_{s\rightarrow0^+}sL_s^\kappa(E\cap \Omega, E\cap\Omega^c)\cr
&=\lim_{s\rightarrow0^+}s\int_1^\infty\int_{E\cap \Omega}\int_{E\cap  B_d(x,R/2)^c} p_t^\kappa(x,y)\, \mu_\kappa(\d y)\mu_\kappa(\d x)\,\frac{\d
t}{t^{1+s}}\cr
&=\lim_{s\rightarrow0^+}\int_{E\cap \Omega}s\Lambda^\kappa_E(x,R/2,s)\,\mu_\kappa(\d x),
\end{split}\end{equation*}
and
\begin{equation*}\begin{split}
&\lim_{s\rightarrow0^+}sL_s^\kappa(E^c\cap \Omega, E\cap\Omega^c)\cr
&=\lim_{s\rightarrow0^+}s\int_1^\infty\int_{E^c\cap \Omega}\int_{E\cap  B_d(x,R/2)^c} p_t^\kappa(x,y)\,
\mu_\kappa(\d y)\mu_\kappa(\d x)\,\frac{\d t}{t^{1+s}}\cr
&=\lim_{s\rightarrow0^+}\int_{E^c\cap \Omega}s\Lambda^\kappa_E(x,R/2,s)\,\mu_\kappa(\d x).
\end{split}\end{equation*}
Hence
\begin{equation}\begin{split}\label{pf-thm2-9}
&\lim_{s\rightarrow0^+}\frac{1}{2}[s{\rm Per}_{s}^\kappa(E,\Omega)-s{\rm Per}_{s}^\kappa(E\cap\Omega,\Omega)]\cr
&=\lim_{s\rightarrow0^+}[sL_s^\kappa(E^c\cap \Omega, E\cap\Omega^c)-sL_s^\kappa(E\cap \Omega, E\cap\Omega^c)]\cr
&=\lim_{s\rightarrow0^+}\bigg(\int_{E^c\cap \Omega}s\Lambda^\kappa_E(x,R/2,s)\,\mu_\kappa(\d x) -\int_{E\cap \Omega}s\Lambda^\kappa_E(x,R/2,s)\,\mu_\kappa(\d x)\bigg).
\end{split}\end{equation}

For brevity, we let $E_0=E\setminus B_d(0,R/2)$ and $E_x=E\setminus B_d(x,R/2)$ in what follows. Set
$$\Xi^\kappa_E(s):=s\int_1^\infty P_t^\kappa\mathbbm{1}_{E_0}(0)\,\frac{\d t}{t^{1+s}}.$$

We \textbf{claim} that for every bounded measurable subset $F$ of $\R^n$,
\begin{equation}\begin{split}\label{claim-1}
\lim_{s\rightarrow0^+}&\bigg|s\int_F\int_1^\infty P_t^\kappa\mathbbm{1}_{E_0}(x)\,\frac{\d t}{t^{1+s}}
\mu_\kappa(\d x)-  s\int_F\int_1^\infty P_t^\kappa\mathbbm{1}_{E_x}(x)\,\frac{\d t}{t^{1+s}}
\mu_\kappa(\d x)\bigg|=0,
\end{split}\end{equation}
and
\begin{equation}\label{claim-2}
\lim_{s\rightarrow0^+}\bigg|s\int_F\int_1^\infty P_t^\kappa\mathbbm{1}_{E_0}(x)\,\frac{\d t}{t^{1+s}}\mu_\kappa(\d x)  - \mu_\kappa(F)\Xi^\kappa_E(s)\bigg|=0.
\end{equation}

(i)[Proof of Theorem \ref{lim-rel-s-per-converse}(a)] Assume $\mu_\kappa(E\cap \Omega)=\mu_\kappa(E^c\cap \Omega)$. By \eqref{pf-thm2-9}, we have
\begin{equation*}\begin{split}\label{(2-a)-1}
&\lim_{s\rightarrow0^+}[s{\rm Per}_{s}^\kappa(E,\Omega)-s{\rm Per}_{s}^\kappa(E\cap\Omega,\Omega)]\cr
&= 2\lim_{s\rightarrow0^+}\bigg(s\int_{E^c\cap \Omega}\int_1^\infty P_t^\kappa\mathbbm{1}_{E_x}(x)\,\frac{\d t}{t^{1+s}}\mu_\kappa(\d x)   -  s\int_{E\cap \Omega}\int_1^\infty P_t^\kappa\mathbbm{1}_{E_x}(x)\,\frac{\d t}{t^{1+s}}\mu_\kappa(\d x)\bigg)\cr
&=2\lim_{s\rightarrow0^+}\bigg[\Big(s\int_{E^c\cap \Omega}\int_1^\infty P_t^\kappa\mathbbm{1}_{E_x}(x)\,\frac{\d t}{t^{1+s}}\mu_\kappa(\d x)    -        s\int_{E^c\cap \Omega}\int_1^\infty P_t^\kappa\mathbbm{1}_{E_0}(x)\,\frac{\d t}{t^{1+s}}\mu_\kappa(\d x)\Big)\cr
&\quad+ \Big(s\int_{E^c\cap \Omega}\int_1^\infty P_t^\kappa\mathbbm{1}_{E_0}(x)\,\frac{\d t}{t^{1+s}}\mu_\kappa(\d x)         -     s\int_{E^c\cap \Omega}\int_1^\infty P_t^\kappa\mathbbm{1}_{E_0}(0)\,\frac{\d t}{t^{1+s}}\mu_\kappa(\d x)  \Big)\cr
&\quad+\Big(s\int_{E\cap \Omega}\int_1^\infty P_t^\kappa\mathbbm{1}_{E_0}(0)\,\frac{\d t}{t^{1+s}}\mu_\kappa(\d x)   -    s\int_{E\cap \Omega}\int_1^\infty P_t^\kappa\mathbbm{1}_{E_0}(x)\,\frac{\d t}{t^{1+s}}\mu_\kappa(\d x)  \Big)\cr
&\quad+ \Big( s\int_{E\cap \Omega}\int_1^\infty P_t^\kappa\mathbbm{1}_{E_0}(x)\,\frac{\d t}{t^{1+s}}\mu_\kappa(\d x) -   s\int_{E\cap \Omega}\int_1^\infty P_t^\kappa\mathbbm{1}_{E_x}(x)\,\frac{\d t}{t^{1+s}}\mu_\kappa(\d x)\Big)\bigg].
\end{split}\end{equation*}
Since Theorem \ref{MS-dunkl} (see also \eqref{lim-per}) implies
$$\lim_{s\rightarrow0^+}s{\rm Per}_{s}^\kappa(E\cap\Omega,\Omega)=\lim_{s\rightarrow0^+}s{\rm Per}_{s}^\kappa(E\cap\Omega)=2\mu_\kappa(E\cap \Omega),$$
applying \eqref{claim-1} and \eqref{claim-2} with $F\in\{E^c\cap \Omega,E\cap \Omega\}$ yields
$$\lim_{s\rightarrow0^+}\frac{1}{2}s{\rm Per}_{s}^\kappa(E\cap\Omega,\Omega)=\mu_\kappa(E\cap \Omega),$$
which completes the proof of Theorem \ref{lim-rel-s-per-converse}(a).

(ii)[Proof of Theorem \ref{lim-rel-s-per-converse}(b)] Assume $\mu_\kappa(E\cap \Omega)\neq\mu_\kappa(E^c\cap \Omega)$ and that  $\lim_{s\rightarrow0^+}s{\rm Per}_{s}^\kappa(E,\Omega)$ exists. The sufficiency follows from Theorem \ref{lim-rel-s-per}(1). So, we only need to prove the necessity.

Applying \eqref{pf-thm2-9}, \eqref{claim-1} and \eqref{claim-2} with $F\in\{E^c\cap \Omega,E\cap \Omega\}$, and Theorem \ref{MS-dunkl}, we obtain
\begin{equation*}\begin{split}
&\lim_{s\rightarrow0^+}\Xi^\kappa_E(s)[\mu_\kappa(E^c\cap\Omega)-\mu_\kappa(E\cap\Omega)] \cr
&=\lim_{s\rightarrow0^+}\bigg( s \int_{E^c\cap \Omega}\int_1^\infty P_t^\kappa\mathbbm{1}_{E_0}(0)\,\frac{\d t}{t^{1+s}}\,\mu_\kappa(\d x)-  s \int_{E\cap \Omega}\int_1^\infty P_t^\kappa\mathbbm{1}_{E_0}(0)\,\frac{\d t}{t^{1+s}}\,\mu_\kappa(\d x)\bigg)\cr
&=\lim_{s\rightarrow0^+}\bigg\{\Big[s  \int_{E^c\cap \Omega}\int_1^\infty P_t^\kappa\mathbbm{1}_{E_0}(0)\,\frac{\d t}{t^{1+s}}\,\mu_\kappa(\d x) -   s\int_{E^c\cap \Omega}\int_1^\infty P_t^\kappa\mathbbm{1}_{E_0}(x)\,\frac{\d t}{t^{1+s}}\mu_\kappa(\d x)\Big]\cr
&\quad+\Big[s\int_{E^c\cap \Omega}\int_1^\infty P_t^\kappa\mathbbm{1}_{E_0}(x)\,\frac{\d t}{t^{1+s}}\mu_\kappa(\d x)-  s\int_{E^c\cap \Omega}\int_1^\infty P_t^\kappa\mathbbm{1}_{E_x}(x)\,\frac{\d t}{t^{1+s}}\mu_\kappa(\d x)\Big]\cr
&\quad+ \Big[ s\int_{E^c\cap \Omega}\int_1^\infty P_t^\kappa\mathbbm{1}_{E_x}(x)\,\frac{\d t}{t^{1+s}}\mu_\kappa(\d x) - s\int_{E\cap \Omega}\int_1^\infty P_t^\kappa\mathbbm{1}_{E_x}(x)\,\frac{\d t}{t^{1+s}}\mu_\kappa(\d x) \Big]\cr
&\quad+\Big[ s\int_{E\cap \Omega}\int_1^\infty P_t^\kappa\mathbbm{1}_{E_x}(x)\,\frac{\d t}{t^{1+s}}\mu_\kappa(\d x)  -   s \int_{E\cap \Omega}\int_1^\infty P_t^\kappa\mathbbm{1}_{E_0}(x)\,\frac{\d t}{t^{1+s}}\,\mu_\kappa(\d x) \Big]\cr
&\quad +  \Big[s \int_{E\cap \Omega}\int_1^\infty P_t^\kappa\mathbbm{1}_{E_0}(x)\,\frac{\d t}{t^{1+s}}\,\mu_\kappa(\d x)  -  s \int_{E\cap \Omega}\int_1^\infty P_t^\kappa\mathbbm{1}_{E_0}(0)\,\frac{\d t}{t^{1+s}}\,\mu_\kappa(\d x)\Big]\bigg\}\cr
&=\lim_{s\rightarrow0^+}\frac{1}{2}[s{\rm Per}_{s}^\kappa(E,\Omega)-s{\rm Per}_{s}^\kappa(E\cap\Omega,\Omega)]\cr
&=\lim_{s\rightarrow0^+}\frac{1}{2} s{\rm Per}_{s}^\kappa(E,\Omega)-\mu_\kappa(E\cap\Omega),
\end{split}\end{equation*}
which together with Lemma \ref{property-Lambda-E} implies that Theorem \ref{lim-rel-s-per-converse}(b) holds.

\underline{\textsc{Part III}}.
Here,  we aim to prove the last \textbf{claim}, namely, \eqref{claim-1} and \eqref{claim-2}.

We begin with the proof of \eqref{claim-1}. Applying Lemma \ref{ultra-contra}(i) yields the estimate:
\begin{equation*}\begin{split}
&\bigg|s\int_F\int_1^\infty P_t^\kappa\mathbbm{1}_{E_0}(x)\,\frac{\d t}{t^{1+s}}\mu_\kappa(\d x)-  s\int_F\int_1^\infty P_t^\kappa\mathbbm{1}_{E_x}(x)\,\frac{\d t}{t^{1+s}}\mu_\kappa(\d x)\bigg|\cr
&\leq s\int_1^\infty\int_F[P_t^\kappa\mathbbm{1}_{B_d(0,R/2)}(x)+P_t^\kappa\mathbbm{1}_{B_d(x,R/2)}(x)]\,\mu_\kappa(\d x)\frac{\d t}{t^{1+s}}\cr
&\preceq s \mu_\kappa(F) [\mu_\kappa(B_d(0,R/2)) + \mu_\kappa(B_d(x,R/2))] \int_1^\infty t^{-\frac{n}{2}-s-1-\chi}\,\d t\cr
&=\mu_\kappa(F) [\mu_\kappa(B_d(0,R/2)) + \mu_\kappa(B_d(x,R/2))] \frac{s}{s+n/2+\chi},
\end{split}\end{equation*}
which clearly vanishes as $s\rightarrow0^+$.

For \eqref{claim-2}, fix $r_0>0$ sufficiently large so that $B(0,r_0)\supset F$. Combining Lemma \ref{int-ker-bd} and Lemma \ref{ker-lip}, we obtain
\begin{equation*}\begin{split}\label{(2-b)-1}
&\bigg|s\int_F\int_1^\infty P_t^\kappa\mathbbm{1}_{E_0}(x)\,\frac{\d t}{t^{1+s}}\mu_\kappa(\d x)-  \mu_\kappa(F)\Xi^\kappa_E(s) \bigg|\cr
&\leq s \int_1^\infty\int_F\int_{E_0}|p_t^\kappa(x,z)-p_t^\kappa(0,z)|\,\mu_\kappa(\d z)\mu_\kappa(\d x)\frac{\d t}{t^{1+s}}\cr
&\preceq s\int_1^\infty\int_{B(0,r_0)}\int_{B_d(0,R/2)^c} |x|\Big(1+\frac{|x|}{\sqrt{t}}\Big)^{n+2\chi}e^{\frac{|x|^2}{c_4 t}}\cr
 &\quad\times \frac{1}{\sqrt{t}V_\kappa(x,\sqrt{t})} e^{-c_5\frac{d(x,z)^2}{t}}\,\mu_\kappa(\d z)\mu_\kappa(\d x)\frac{\d t}{t^{1+s}} \cr
&\preceq s r_0(1+r_0)^{n+2\chi}e^{r_0^2/c_4}\int_1^\infty t^{-(s+1)}e^{-c_6R^2/t}\,\d t\cr
&\preceq sr_0(1+r_0)^{n+2\chi}e^{r_0^2/c_4}R^{-2s},\quad  s>0,
\end{split}\end{equation*}
where $c_4,c_5,c_6$ are positive constants. The limit in \eqref{claim-2} follows immediately from this estimate.

Therefore, the proof is completed.
\end{proof}

\subsection*{Acknowledgment}\hskip\parindent
The authors gratefully acknowledge Prof. Yuan Liu (CAS), Prof. Yong Han (SZU), and Prof. Huonan Lin (FJNU) for their insightful discussions and valuable clarifications on fractal theory.

\begin{appendix}
\section{Appendix}\label{app-A}\hskip\parindent
In this appendix, building upon recent work on Besov spaces in Grushin spaces \cite{ZWLL2024+}, we introduce analogue Besov spaces associated with the Dunkl Laplacian, generalizing those defined in Definition \ref{besov}, and establish some of their properties.
\begin{definition}\label{general-besov}
Let $p\in[1,\infty)$, $q\in[1,\infty]$ and $s\in(0,\infty)$. We define
$${\rm B}_{s,p}^{\kappa,q}(\R^n):=\big\{f\in {\rm L}^p(\mu_\kappa):\ {\rm N}^{\kappa,s}_{p,q}(f)<\infty\big\},$$
where the Besov seminorm is given by
\begin{equation*}
{\rm N}^{\kappa,q}_{s,p}(f)=\begin{cases}
\Big(\int_0^\infty t^{-(1+\frac{sq}{2})}\Big(\int_{\R^n}P_t^\kappa(|f-f(x)|^p)(x)\,\mu_\kappa(\d x)\Big)^{q/p}\d t\Big)^{1/q},\quad&{\mbox{if }q\neq\infty},\\

\\

\sup_{t>0} t^{-s/2}\Big(\int_{\R^n}P_t^\kappa(|f-f(x)|^p)(x)\,\mu_\kappa(\d x)\Big)^{1/p},\quad&{\mbox{if }q=\infty}.
\end{cases}
\end{equation*}
\end{definition}

It is clear that ${\rm B}_{s,p}^{\kappa,p}(\R^n)={\rm B}_{s,p}^\kappa(\R^n)$ for all $(p,s)\in[1,\infty)\times(0,\infty)$.  We give the following elementary remark.
\begin{remark}\label{rem-A}
(1) Let $p,q\in[1,\infty)$ and $s\in(0,\infty)$. For any measurable function $f$ on $\R^n$, if ${\rm N}^{\kappa,q}_{s,p}(f)<\infty$, then
$$\widetilde{{\rm N}}^{\kappa,q}_{s,p}(f):=\Big(\int_0^1 t^{-(1+\frac{sq}{2})}\Big(\int_{\R^n}P_t^\kappa(|f-f(x)|^p)(x)\,\mu_\kappa(\d x)\Big)^{q/p}\d t\Big)^{1/q}<\infty.$$
Conversely, for any $f\in {\rm L}^p(\mu_\kappa)$, if $\widetilde{{\rm N}}^{\kappa,q}_{s,p}(f)<\infty$, then ${\rm N}^{\kappa,q}_{s,p}(f)<\infty$. Indeed, by the elementary inequality $(a+b)^p\leq 2^{p-1}(a^p+b^p)$ for all $a,b\geq0$, Fubini's theorem, conservativeness and symmetry of the Dunkl heat kernel, we have
\begin{equation*}\begin{split}
&{\rm N}^{\kappa,q}_{s,p}(f)^q=\widetilde{{\rm N}}^{\kappa,q}_{s,p}(f)^q +
\int_1^\infty t^{-(1+\frac{sq}{2})}\Big(\int_{\R^n}\int_{\R^n}p_t^\kappa(x,y)|f(y)-f(x)|^p\,\mu_\kappa(\d y)\mu_\kappa(\d x)\Big)^{q/p}\d t\\
&\leq \widetilde{{\rm N}}^{\kappa,q}_{s,p}(f)^q +
\int_1^\infty t^{-(1+\frac{sq}{2})}\Big(2^{p-1}\int_{\R^n}\int_{\R^n}p_t^\kappa(x,y)(|f(y)|^p + |f(x)|^p) \,\mu_\kappa(\d x)\mu_\kappa(\d x)\Big)^{q/p}\d t\\
&=\widetilde{{\rm N}}^{\kappa,q}_{s,p}(f)^q + 2^q\|f\|_{{\rm L}^p(\mu_\kappa)}^q \int_1^\infty t^{-(1+\frac{sq}{2})}\,\d t\\
&=\widetilde{{\rm N}}^{\kappa,q}_{s,p}(f)^q +\frac{2^{q+1}}{sq}\|f\|_{{\rm L}^p(\mu_\kappa)}^q<\infty.
\end{split}\end{equation*}

(2) Let $p\in[1,\infty)$, $q=\infty$ and $s\in(0,\infty)$. For any measurable function $f$ on $\R^n$, if ${\rm N}^{\kappa,\infty}_{s,p}(f)<\infty$, then
$$\widetilde{{\rm N}}^{\kappa,\infty}_{s,p}(f):=\limsup_{t\rightarrow0^+} t^{-s/2}\Big(\int_{\R^n}P_t^\kappa(|f-f(x)|^p)(x)\,\mu_\kappa(\d x)\Big)^{1/p}<\infty.$$
Conversely, for any $f\in {\rm L}^p(\mu_\kappa)$, if $\widetilde{{\rm N}}^{\kappa,\infty}_{s,p}(f)<\infty$, then ${\rm N}^{\kappa,\infty}_{s,p}(f)<\infty$. In fact, the finiteness of $\widetilde{{\rm N}}^{\kappa,\infty}_{s,p}(f)$ implies that there exists some $\delta>0$ such that
\begin{equation*}
\sup_{t\in(0,\delta)}t^{-s/2}\Big(\int_{\R^n}P_t^\kappa(|f-f(x)|^p)(x)\,\mu_\kappa(\d x)\Big)^{1/p}<\infty,
\end{equation*}
and similar as the argument in (1),  we derive
\begin{equation*}\begin{split}
\sup_{t\geq\delta}t^{-s/2}\Big(\int_{\R^n}P_t^\kappa(|f-f(x)|^p)(x)\,\mu_\kappa(\d x)\Big)^{1/p}\leq 2\delta^{-s/2}\|f\|_{{\rm L}^p(\mu_\kappa)}<\infty.
\end{split}\end{equation*}

Therefore, (1) and (2) together imply that when $(p,q,s)\in[1,\infty)\times[1,\infty]\times(0,\infty)$, if $f\in {\rm L}^p(\mu_\kappa)$, then $\widetilde{{\rm N}}^{\kappa,q}_{s,p}(f)<\infty$ and ${\rm N}^{\kappa,q}_{s,p}(f)<\infty$ are equivalent.
\end{remark}

The next proposition presents further properties of the Besov spaces introduced in Definition \ref{general-besov}.
\begin{proposition}\label{property-besov}
Let $p\in[1,\infty)$, $q\in[1,\infty]$ and $s\in(0,\infty)$. Then, the following properties hold.
\begin{itemize}
\item[(\emph{i})] ${\rm B}_{s,p}^{\kappa,q}(\R^n)$ is a Banach space with respect to the norm
$$\|f\|_{{\rm B}_{s,p}^{\kappa,q}(\R^n)}:=\|f\|_{{\rm L}^p(\mu_\kappa)}+ {\rm N}^{\kappa,q}_{s,p}(f).$$

\item[(\emph{ii})] For $s_1,s_2\in(0,\infty)$ with $s_1\leq s_2$, ${\rm B}_{s_2,p}^{\kappa,q}(\R^n)$ is continuously embedded in ${\rm B}_{s_1,p}^{\kappa,q}(\R^n)$.

\item[(\emph{iii})] Let $f,g\in {\rm B}_{s,p}^{\kappa,q}(\R^n)$. Denote $\Phi=\max\{f,g\}$ and $\Psi=\min\{f,g\}$. Then, $\Phi,\Psi\in{\rm B}_{s,p}^{\kappa,q}(\R^n)$, and
\begin{equation*}
\|\Phi\|_{{\rm B}_{s,p}^{\kappa}(\R^n)}^p+\|\Psi\|_{{\rm B}_{s,p}^{\kappa}(\R^n)}^p\leq \|f\|_{{\rm B}_{s,p}^{\kappa}(\R^n)}^p + \|g\|_{{\rm B}_{s,p}^{\kappa}(\R^n)}^p,\\
\end{equation*}
\begin{equation}\label{rem-A-1}
\|\Phi\|_{{\rm B}_{s,p}^{\kappa,\infty}(\R^n)}^p+\|\Psi\|_{{\rm B}_{s,p}^{\kappa,\infty}(\R^n)}^p\leq \|f\|_{{\rm B}_{s,p}^{\kappa,\infty}(\R^n)}^p + \|g\|_{{\rm B}_{s,p}^{\kappa,\infty}(\R^n)}^p.
\end{equation}
\end{itemize}
\end{proposition}
\begin{proof}
We provide a detailed proof for the inequality \eqref{rem-A-1}. The remaining statements (\emph{i})-(\emph{iii}) in Proposition \ref{property-besov} follow by analogous arguments to those employed in the proof of Proposition 3.6, Lemma 3.7, and Proposition 3.8 of \cite{ZWLL2024+}.

Let $E_1=\{x\in\R^n:\ f\geq g\}$ and $E_2=\{x\in\R^n:\ f<g\}$. We observe that
\begin{equation}\begin{split}\label{pf-rem-A-1}
\|\Phi\|_{{\rm L}^p(\mu_\kappa)}^p + \|\Psi\|_{{\rm L}^p(\mu_\kappa)}^p&=\int_{E_1}|f|^p\,\d\mu_\kappa + \int_{E_2}|g|^p\,\d\mu_\kappa
+ \int_{E_1}|g|^p\,\d\mu_\kappa + \int_{E_2}|f|^p\,\d\mu_\kappa\\
&=\|f\|_{{\rm L}^p(\mu_\kappa)}^p + \|g\|_{{\rm L}^p(\mu_\kappa)}^p.
\end{split}\end{equation}

On the other hand, we decompose that
\begin{equation*}\begin{split}\label{pf-rem-A-2}
&\int_{\R^n} P^\kappa_t(|\Phi-\Phi(x)|^p)(x)\,\mu_\kappa(\d x)\\
&=\int_{E_1}\int_{E_1}p^\kappa_t(x,y)|f(y)-f(x)|^p\,\mu_\kappa(\d y)\mu_\kappa(\d x) \\
&\quad+ \int_{E_2}\int_{E_1}p^\kappa_t(x,y)|f(y)-g(x)|^p\,\mu_\kappa(\d y)\mu_\kappa(\d x)\\
&\quad +\int_{E_1}\int_{E_2}p^\kappa_t(x,y)|g(y)-f(x)|^p\,\mu_\kappa(\d y)\mu_\kappa(\d x)\\
&\quad + \int_{E_2}\int_{E_2}p^\kappa_t(x,y)|g(y)-g(x)|^p\,\mu_\kappa(\d y)\mu_\kappa(\d x)\\
&=: {\rm I}_1 + {\rm I}_2 + {\rm I}_3 + {\rm I}_4,\quad t>0,
\end{split}\end{equation*}
and
\begin{equation*}\begin{split}\label{pf-rem-A-2}
&\int_{\R^n} P^\kappa_t(|\Psi-\Psi(x)|^p)(x)\,\mu_\kappa(\d x)\\
&=\int_{E_1}\int_{E_1}p^\kappa_t(x,y)|g(y)-g(x)|^p\,\mu_\kappa(\d y)\mu_\kappa(\d x) \\
&\quad+ \int_{E_2}\int_{E_1}p^\kappa_t(x,y)|g(y)-f(x)|^p\,\mu_\kappa(\d y)\mu_\kappa(\d x)\\
&\quad +\int_{E_1}\int_{E_2}p^\kappa_t(x,y)|f(y)-g(x)|^p\,\mu_\kappa(\d y)\mu_\kappa(\d x)\\
&\quad + \int_{E_2}\int_{E_2}p^\kappa_t(x,y)|f(y)-f(x)|^p\,\mu_\kappa(\d y)\mu_\kappa(\d x)\\
&=: {\rm J}_1 + {\rm J}_2 + {\rm J}_3 + {\rm J}_4,\quad t>0.
\end{split}\end{equation*}
By applying the rearrangement inequality:
$$|a_0-b_1|^p + |a_1-b_0|^p\leq |a_0-b_0|^p+|a_1-b_1|^p,$$
for all $(a_0,a_1),(b_0,b_1)\in\R^2$ with $(a_0-b_0)(a_1-b_1)\leq0$, we proceed as follows. Let $a_0=g(y)$, $a_1=g(x)$, $b_0=f(y)$ and $b_1=f(x)$, where $x\in E_2$ and $y\in E_1$. The rearrangement inequality yields
\begin{equation}\begin{split}\label{pf-rem-A-3}
{\rm I}_2 + {\rm J}_2 &\leq \int_{E_2}\int_{E_1}p^\kappa_t(x,y)(|g(y)-g(x)|^p+|f(y)-f(x)|^p)\,\mu_\kappa(\d y)\mu_\kappa(\d x), \quad t>0.
\end{split}\end{equation}
Now, take $x\in E_1$ and $y\in E_2$. The same inequality gives
\begin{equation}\begin{split}\label{pf-rem-A-4}
{\rm I}_3 + {\rm J}_3 &\leq \int_{E_2}\int_{E_1}p^\kappa_t(x,y)(|g(y)-g(x)|^p+|f(y)-f(x)|^p)\,\mu_\kappa(\d y)\mu_\kappa(\d x), \quad t>0.
\end{split}\end{equation}
Combining the bounds from \eqref{pf-rem-A-3} and \eqref{pf-rem-A-4} with the remaining terms ${\rm I}_1, {\rm I}_4, {\rm J}_1,{\rm J}_4$, we arrive at
\begin{equation*}\begin{split}\label{pf-rem-A-5}
&\int_{\R^n} P^\kappa_t(|\Phi-\Phi(x)|^p)(x)\,\mu_\kappa(\d x) + \int_{\R^n} P^\kappa_t(|\Psi-\Psi(x)|^p)(x)\,\mu_\kappa(\d x)\\
&\leq \int_{\R^n} P^\kappa_t(|f-f(x)|^p)(x)\,\mu_\kappa(\d x) + \int_{\R^n} P^\kappa_t(|g-g(x)|^p)(x)\,\mu_\kappa(\d x),\quad t>0.
\end{split}\end{equation*}
Multiplying on both sides of this inequality by $t^{-\frac{ps}{2}}$, we obtain
\begin{equation}\begin{split}\label{pf-rem-A-6}
&t^{-\frac{ps}{2}}\int_{\R^n} P^\kappa_t(|\Phi-\Phi(x)|^p)(x)\,\mu_\kappa(\d x) + t^{-\frac{ps}{2}}\int_{\R^n} P^\kappa_t(|\Psi-\Psi(x)|^p)(x)\,\mu_\kappa(\d x)\\
&\leq {\rm N}^{\kappa,\infty}_{s,p}(f)^p + {\rm N}^{\kappa,\infty}_{s,p}(g)^p<\infty,\quad t>0.
\end{split}\end{equation}
Since $\Phi, \Psi \in {\rm B}_{s,p}^{\kappa,q}(\R^n)$, Remark \ref{rem-A}(2) allows us to take the limsup in \eqref{pf-rem-A-6} as $t \to 0^+$ and conclude
\begin{equation}\begin{split}\label{pf-rem-A-7}
&{\rm N}^{\kappa,\infty}_{s,p}(\Phi)^p + {\rm N}^{\kappa,\infty}_{s,p}(\Psi)^p\\
&=\limsup_{t\rightarrow0^+}t^{-\frac{ps}{2}}\int_{\R^n} P^\kappa_t(|\Phi-\Phi(x)|^p)(x)\,\mu_\kappa(\d x)\\
 &\quad + \limsup_{t\rightarrow0^+}t^{-\frac{ps}{2}}\int_{\R^n} P^\kappa_t(|\Psi-\Psi(x)|^p)(x)\,\mu_\kappa(\d x)\\
&=\limsup_{t\rightarrow0^+}t^{-\frac{ps}{2}}\bigg\{\int_{\R^n} P^\kappa_t(|\Phi-\Phi(x)|^p)(x)\,\mu_\kappa(\d x) + \int_{\R^n} P^\kappa_t(|\Psi-\Psi(x)|^p)(x)\,\mu_\kappa(\d x)\bigg\}\\
&\leq {\rm N}^{\kappa,\infty}_{s,p}(f)^p + {\rm N}^{\kappa,\infty}_{s,p}(g)^p.
\end{split}\end{equation}

Thus, combining \eqref{pf-rem-A-1} with \eqref{pf-rem-A-7} completes the proof of  \eqref{rem-A-1}.

\end{proof}

\section{Appendix}\label{app-B}\hskip\parindent
In this part, we provide some elementary properties on the $s$-D-perimeter given in Definition \ref{rel-s-per}.
\begin{proposition}\label{property-D-perimeter}
Let $\Omega\subset\R^n$ be an open set and $s\in(0,1/2)$. Then, the following properties hold.
\begin{itemize}
\item[(\emph{1})](\emph{$G$-invariance})  For every measurable subsets $A\subset\R^n$ and each $g\in G$,
$${\rm Per}^\kappa_s(gA,g\Omega)={\rm Per}^\kappa_s(A,\Omega).$$

\item[(\emph{2})](\emph{Subadditivity}) For any measurable subsets $A,B\subset\R^n$, the following subadditivity holds:
$${\rm Per}^\kappa_s (A\cup B,\Omega)\leq{\rm Per}^\kappa_s (A,\Omega)+{\rm Per}^\kappa_s (B,\Omega).$$

\item[(\emph{3})](\emph{Monotonicity in the domain}) Let $U_1,U_2\subset\R^n$ be measurable open set with $U_1\subset U_2$. Then, for any measurable set $A\subset\R^n$,
$${\rm Per}^\kappa_s (A,U_1)\leq {\rm Per}^\kappa_s (A,U_2).$$

\item[(\emph{4})](\emph{Non-monotonicity in the set}) There exist measurable sets $A,B\subset\R^n$ with $A\subset B$ such that
$${\rm Per}^\kappa_s(A,\Omega)>{\rm Per}^\kappa_s(B,\Omega).$$
In particular, the functional ${\rm Per}^\kappa_s(\cdot, \Omega)$ need not be increasing with respect to set inclusion.
\end{itemize}
\end{proposition}
\begin{proof} Property (\emph{2}) follows from the same argument as in  \cite[Proposition
2.1]{DFPV}, applied to the definition of ${\rm Per}^\kappa_s$. By the definition of ${\rm Per}^\kappa_s$,  a  direct computation leads to
\begin{equation*}\begin{split}
{\rm Per}^\kappa_s(A,U_2)&={\rm Per}^\kappa_s(A,U_1)+  2L^\kappa_s(A\cap U_1^c\cap U_2, E^c\cap U_1^c)\\
&\quad+ 2L^\kappa_s(A^c\cap U_1^c\cap U_2, A\cap U_2^c),
\end{split}\end{equation*}
which clearly implies (\emph{3}). To derive (\emph{4}), see the proof of \cite[Proposition 2.3]{DFPV} in the particular case when $\kappa\equiv0$. In what follows, we turn to prove (\emph{1}).

The Dunkl heat kernel admits the explicit representation (see, e.g., \cite[Section 4]{Rosler1998}):
$$p^\kappa_t(x,y)=\frac{1}{\mathfrak{c}_\kappa(2t)^{n+2\chi}}\exp\Big(\frac{|x|^2+|y|^2}{4t}\Big)E_\kappa\Big(\frac{x}{\sqrt{2t}}, \frac{y}{\sqrt{2t}}\Big),\quad x,y\in\R^n,\,t>0,$$
where $\mathfrak{c}_\kappa$ is the Macdonald--Mehta constant (defined in Section \ref{sec-intro}), and $E_\kappa(\cdot,\cdot)$ is the Dunkl kernel (initially introduced in \cite{Dunkl1991}) associated with the Dunkl operator $T_\kappa^\xi$. It is known that $E_\kappa(\cdot,\cdot)$ can be uniquely extended to a holomorphic function in $\mathbb{C}^d\times\mathbb{C}^d$, and it is $G$-invariant, i.e.,  $E_\kappa(gx,gy)=E_\kappa(x,y)$ for all $x,y\in\R^n$ and $g\in G$ (see \cite[Section 2.5]{Rosler2003} for details and more properties on the Dunkl kernel), where $\mathbb{C}$ denotes the set of complex numbers. This immediately implies the $G$-invariance of $p^\kappa_t(\cdot,\cdot)$. Combining this with the $G$-invariance of the measure $\mu_\kappa$, we obtain for any measurable sets $E,F\subset\R^n$,
\begin{equation*}\begin{split}\label{pf-app-B-1}
L^\kappa_s(gE,gF)&=\int_0^\infty t^{-(1+s)}\int_{gE}\int_{gF}p_t^\kappa(x,y)\,\mu_\kappa(\d y)\mu_\kappa(\d x)\d t\\
&=\int_0^\infty t^{-(1+s)}\int_{E}\int_{F}p_t^\kappa(gx,gy)\,\mu_\kappa(\d y)\mu_\kappa(\d x)\d t\\
&=L^\kappa_s(E,F).
\end{split}\end{equation*}
The conclusion follows by observing that $g E^c=(g E)^c$ and $gE \cap g F=g(E\cap F)$ for any $g\in G$. Substituting these into the definition of ${\rm Per}^\kappa_s$ finishes the proof of  (\emph{1}).
\end{proof}

Let $\Omega$ and $s$ be as in Proposition \ref{property-D-perimeter}, and let $A\subset\R^n$ be measurable.  In the particular case where $\kappa\equiv0$, it was proved in \cite[Proposition 3.12]{Lom2019} that the following geometric properties hold:
\begin{itemize}
\item[(\emph{a})](Scaling invariance) For any $r>0$, ${\rm Per}^0_s(rA,r\Omega)=r^{n-s}{\rm Per}^0_s(A,\Omega)$.

\item[(\emph{b})](Translation invariance) For any $z\in\R^n$, ${\rm Per}^0_s(A+z,\Omega+z)={\rm Per}^0_s(A,\Omega)$.
\end{itemize}
However, in general, the functional ${\rm Per}^\kappa_s$ typically fail to satisfy these invariance properties, due to that the measure $\mu_\kappa$ is not translation-invariant, breaking property (\emph{b}), and the Dunkl heat kernel $(p_t^\kappa)_{t>0}$ lacks the homogeneous scaling behavior required to preserve (\emph{a}).

\end{appendix}

\end{document}